\documentclass{article} \usepackage{etoolbox} \newtoggle{author} \toggletrue{author}

\iftoggle{author}{
    \usepackage[utf8]{inputenc}
    \usepackage[T1]{fontenc}
    \usepackage[english]{babel}

    \usepackage[section]{algorithm}
    \usepackage{amsmath}
    \usepackage{amsthm}
    \usepackage{geometry}
    \usepackage{hyperref}
    \usepackage[sort]{natbib}

    \usepackage{thmtools}
    \usepackage[capitalize]{cleveref}

    \crefname{equation}{}{}
    \crefname{figure}{Figure}{Figures}
    \crefname{section}{section}{sections}
    \crefname{subsection}{subsection}{subsections}

    \let\oldparagraph=\paragraph
    \renewcommand\paragraph[1]{\oldparagraph{#1.}}

    \numberwithin{equation}{section}
}{}

\usepackage{algpseudocode}
\usepackage{amssymb}
\usepackage{microtype}
\usepackage{pgfplots}
\usepackage{resmes}

\iftoggle{author}{
    \theoremstyle{definition}
    \newtheorem{definition}{Definition}[section]

    \theoremstyle{plain}
    \newtheorem{proposition}[definition]{Proposition}
    \newtheorem{theorem}[definition]{Theorem}
    \newtheorem{lemma}[definition]{Lemma}

    \theoremstyle{remark}
    \newtheorem{remark}[definition]{Remark}
}{
    \newsiamremark{remark}{Remark}
}

\iftoggle{author}{
    \title{Quantitative Stability and Numerical Resolution of the Moment Measure Problem\thanks{The research of Y.~A.~R.\ was supported by the Bergman Distinguished Visiting Professorship at Stanford University, a Simons Fellowship in Mathematics, and NSF grants DMS-1906370,2204347,2506872.}}

    \author{Guillaume Bonnet\thanks{CEREMADE, CNRS, Université Paris-Dauphine, Université PSL, 75016 Paris, France (\texttt{bonnet@ceremade.dauphine.fr}).} \and Yanir A.~Rubinstein\thanks{University of Maryland and Stanford University (\texttt{yanir@alum.mit.edu}).}}
}{
    \headers{Stability and Discretization for Moment Measures}{G.~Bonnet and Y.~A.~Rubinstein}

    \title{Quantitative Stability and Numerical Resolution of the Moment Measure Problem\thanks{Submitted to the editors DATE.
    \funding{The research of Y.~A.~R.\ was supported by the Bergman Distinguished Visiting Professorship at Stanford University, a Simons Fellowship in Mathematics, and NSF grants DMS-1906370,2204347,2506872.}}}

    \author{Guillaume Bonnet\thanks{CEREMADE, CNRS, Université Paris-Dauphine, Université PSL, 75016 Paris, France (\email{bonnet@ceremade.dauphine.fr}).} \and Yanir A.~Rubinstein\thanks{University of Maryland and Stanford University (\email{yanir@alum.mit.edu}).}}
}

\ifpdf
\hypersetup{
    pdftitle={Quantitative Stability and Numerical Resolution of the Moment Measure Problem},
    pdfauthor={G.~Bonnet and Y.~A.~Rubinstein}
}
\fi

\pgfplotsset{compat=1.18}

\definecolor{tab1}{HTML}{1f77b4}
\definecolor{tab2}{HTML}{ff7f0e}
\definecolor{tab3}{HTML}{2ca02c}
\definecolor{tab4}{HTML}{d62728}
\definecolor{tab5}{HTML}{9467bd}
\definecolor{tab6}{HTML}{8c564b}
\definecolor{tab7}{HTML}{e377c2}
\definecolor{tab8}{HTML}{7f7f7f}
\definecolor{tab9}{HTML}{bcbd22}
\definecolor{tab10}{HTML}{17becf}

\newcommand{\NN}{\mathbb{N}}
\newcommand{\RR}{\mathbb{R}}
\newcommand{\ZZ}{\mathbb{Z}}

\newcommand{\cE}{\mathcal{E}}
\newcommand{\cH}{\mathcal{H}}
\newcommand{\cI}{\mathcal{I}}

\newcommand{\<}{\langle}
\renewcommand{\>}{\rangle}

\newcommand{\cvxind}[1]{{\bf 1}_{#1}^\infty}

\let\epsilon\undefined
\let\phi\undefined

\DeclareMathOperator{\Conv}{Conv}
\DeclareMathOperator{\Lag}{Lag}
\DeclareMathOperator{\Lip}{Lip}
\DeclareMathOperator{\Vol}{Vol}
\DeclareMathOperator{\diam}{diam}
\DeclareMathOperator{\inter}{int}
\DeclareMathOperator{\mm}{mm}
\DeclareMathOperator{\supp}{supp}
\DeclareMathOperator{\spn}{span}

\begin{document}
\maketitle

\begin{abstract}
    The moment measure problem consists in finding a convex function $\psi$ whose moment measure, i.e., the pushforward by $\nabla \psi$ of the measure with density $e^{-\psi(\,\cdot\,)}$, is prescribed. It is highly non-linear and less understood than the related optimal transport problem. We establish a quantitative stability estimate for this problem. This estimate validates, as well as leads us to introduce, an approach to the numerical resolution of the moment measure problem inspired by semi-discrete optimal transport, consisting in approximating the prescribed measure by a finitely supported one. We describe a Newton method for solving the discrete problem thus obtained, and perform numerical experiments, studying the experimental rates of convergence of the approximation beyond the predictions of the stability estimate.
\end{abstract}

\iftoggle{author}{
    \tableofcontents
}{
    \begin{keywords}
        moment measure, quantitative stability, discretization, error estimate
    \end{keywords}

    \begin{MSCcodes}
        26B25, 49K40, 65N15
    \end{MSCcodes}
}

\section{Introduction}
\label{sec:intro}

The \emph{moment measure} of a convex function $\psi \colon \RR^d \to \RR \cup \{\infty\}$ is defined as the pushforward of the measure $e^{-\psi(x)}\, d x$ by the map $\nabla \psi$,
\begin{equation*}
    \mm(\psi):=(\nabla \psi)_\# (e^{-\psi(x)}\, d x)
\end{equation*}
(noting that $\nabla\psi(x)$
is defined for Lebesgue a.e.\ $x$ by Rademacher's theorem). The \emph{moment measure problem} (MMP) consists of inverting $\mm$, or more precisely, of finding a convex function $\psi$ whose moment measure is a given (prescribed) positive, finite mass, measure $\mu$ on $\RR^d$,
\begin{equation}
    \label{eq:pushforward_equation}
    \mm(\psi)=\mu.
\end{equation}
When the measure $\mu$ admits a density $f$, i.e., $\mu = f(y)\, dy$, the MMP is a weak formulation of
\begin{equation}
    \label{eq:pde_formulation}
    \begin{cases}
        f(\nabla \psi(x)) \det \nabla^2 \psi(x)
        = e^{-\psi(x)}, & x\in \RR^d, \\
        \supp\,\mu \subset \overline {\nabla \psi(\RR^d)}.
    \end{cases}
\end{equation}
Indeed, \cref{eq:pushforward_equation} means that
\begin{equation}
    \label{eq:pushforwarddefEq}
    \int_{(\nabla \psi)^{-1}(E)} e^{-\psi(x)}\, d x = \int_E f(y)\, d y
\end{equation}
for Borel sets $E \subset \RR^d$; assuming that $\psi$ is $C^2$ and strongly convex, $\nabla\psi$ is one-to-one, and so setting $y=\nabla\psi(x)$ implies \cref{eq:pde_formulation}.

As illustrated by the formulation \cref{eq:pushforward_equation}, $\mm$ is a highly non-linear second-order operator. In particular, it is not as well-understood as the closely related, classical, Monge--Ampère operator. Our aim in this article is twofold: (i) to study the first numerical method for the MMP, (ii) to establish a quantitative stability estimate for solutions to \cref{eq:pushforward_equation}. Of course, (i) hinges on (ii), since our goal is, naturally, to \emph{prove} that the numerical method converges to the solution.

In retrospect, it is in some sense a miracle of convexity that $\mm$ even possesses a decent pluripotential theory reminiscent of the one for the much simpler Monge--Ampère operator.

The study of the so-called \emph{second boundary value problem} \cref{eq:pde_formulation} originates in the problem of finding Kähler--Ricci
solitons on toric manifolds. There, $\psi$ is assumed to have linear growth so that $\supp\,\mu$ is a (bounded) convex body, (in fact, a polytope $P$), and $f$ is of the form $e^{\langle a,\,\cdot\,\rangle}$ for a constant vector $a\in \RR^d$
so that $\int_P fdy=0$, i.e., $\mu$ is centered. Under these assumptions, a foundational theorem of Wang--Zhu says that \cref{eq:pde_formulation} is
uniquely smoothly solvable~\cite{wang2004} (for an exposition see Donaldson~\cite{donaldson2008}). Berman--Berndtsson considerably generalized this result by allowing arbitrary convex bodies and right-hand sides~\cite{berman2013} via a variational formulation inspired by optimal transport. The weak formulation \cref{eq:pushforward_equation} of the MMP is due to Cordero-Erausquin--Klartag~\cite{cordero2015}, and Santambrogio~\cite{santambrogio2016} further recast the MMP using a dual variational problem using tools from optimal transport.

It is important to distinguish the earlier results on the MMP from the later ones: neither is stronger nor more general than the other, since the earlier results study \emph{classical} solutions, hence establish higher-derivative estimates, while the later results are concerned with \emph{weak} solutions, hence consider less regular (i.e., more general) right-hand sides.

Since we will ultimately be interested in a numerical scheme, e.g., will allow $\mu$ to be an empirical measure, it will be natural for us to work in the latter framework of weak solutions.

The MMP features an inherent non-uniqueness with respect to translation in $\RR^d$ (this is perhaps easiest to see from \cref{eq:pde_formulation}). In the Kähler context this is referred to as uniqueness modulo automorphisms and goes back to Tian--Zhu~\cite{tian2000}; in the toric setting these automorphisms come from a complex torus and precisely correspond to translations in $\RR^d$. A foundational existence and uniqueness result of Cordero-Erausquin--Klartag (\cref{thm:existence_uniqueness}) determines the precise functional spaces on which \cref{eq:pushforward_equation} is invertible (modulo such translations). For this reason we will denote by
\begin{equation*}
    \mm^{-1}(\mu)
\end{equation*}
an essentially convex function on $\RR^d$ (\cref{def:essential_continuity}) well-defined up to a translation.

\begin{definition}[{\cite[Definition~2]{cordero2015}}]
    \label{def:essential_continuity}
    A convex function $\psi \colon \RR^d \to \RR \cup \{\infty\}$ is \emph{essentially continuous} if it is lower semicontinuous and
    \begin{equation*}
        \cH^{d-1}\Big(\big\{x \in \partial \{\psi < \infty\} \mid \psi(x) < \infty\big\}\Big) = 0,
    \end{equation*}
    where $\cH^{d-1}$ denotes the $(d-1)$-dimensional Hausdorff measure on $\RR^d$.
\end{definition}

A prototypical non-example is the convex indicator function
\begin{equation*}
    \cvxind K(x) := \begin{cases}
        0 &\text{if } x \in K, \\
        \infty &\text{otherwise},
    \end{cases}
\end{equation*}
for a closed convex set $K\subsetneq\RR^d$ with nonempty interior, since then $\{x \in \partial \{\psi < \infty\} \mid \psi(x) < \infty\}=\partial K$.

\begin{theorem}[{\cite[Proposition~1 and Theorem~2]{cordero2015}}]
    \label{thm:existence_uniqueness}
    \leavevmode\\
    {\rm (Existence)} A measure $\mu$ on $\RR^d$ with positive finite mass is the moment measure of some essentially continuous convex function if and only if it is centered and not supported on a hyperplane. \\
    {\rm (Uniqueness)} For $\mu$ centered and not supported on a hyperplane, $\mm^{-1}(\mu)$ is uniquely determined---in the class of essentially continuous convex functions---by $\mu$ up to translation.
\end{theorem}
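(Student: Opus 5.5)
The plan is to follow the variational approach of Cordero-Erausquin--Klartag and Santambrogio and split the argument into necessity, existence, and uniqueness.

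\emph{Necessity.} Suppose $\mu=\mm(\psi)$ with $\psi$ essentially continuous, and set $K=\{\psi<\infty\}$. Then
\begin{equation*}
    \int y\, d\mu(y)=\int_K \nabla\psi\, e^{-\psi}\,dx=-\int_K \nabla(e^{-\psi})\,dx .
\end{equation*}
I would integrate by parts. Essential continuity is exactly what kills the boundary term: $e^{-\psi}$ vanishes $\cH^{d-1}$-a.e.\ on $\partial K$. The term at infinity vanishes because $e^{-\psi}$ decays exponentially, since a convex function with finite integral of $e^{-\psi}$ grows at least linearly. Hence $\mu$ is centered.

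For the hyperplane condition, suppose $\mu$ were supported on $\{\langle y,\theta\rangle=0\}$. Then $\partial_\theta\psi=0$ a.e., so $\psi$ is constant along $\theta$ on $K$. This contradicts $\int e^{-\psi}<\infty$ together with positive mass.

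\emph{Existence.} For $\mu$ centered and not supported on a hyperplane, I would maximize
\begin{equation*}
    J(\phi)=\log\int e^{-\phi^*}\,dx-\int \phi\,d\mu
\end{equation*}
over convex $\phi$ (the Legendre dual formulation). The steps are as follows.
\begin{itemize}
    \item[(i)] $J$ is translation-invariant in the relevant sense. Because $\mu$ is centered, adding a linear function $\langle a,\cdot\rangle$ to $\phi$ translates $\phi^*$ and leaves $\int\phi\,d\mu$ unchanged.
    \item[(ii)] Coercivity modulo these translations comes from the non-degeneracy of $\mu$. For centered $\mu$ not on a hyperplane, $\int \phi\,d\mu\ge c\,\phi(0)+c'\sup_{|y|\le r}\phi$ after normalization. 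I expect this step to be the main obstacle, namely obtaining compactness of a maximizing sequence, normalized by $\phi(0)=0$ and $\min$ at $0$, in local uniform convergence. It requires a Borell/Prékopa-type estimate comparing $\int e^{-\phi^*}$ with the size of $\phi$, plus a lower bound on $\mu$ of half-spaces through the origin, $\inf_\theta\mu(\{\langle y,\theta\rangle>0\})>0$, which holds by centering and non-degeneracy.
    \item[(iii)] At a maximizer, I take the first variation $\phi\mapsto\phi+\epsilon h$. Using $\frac{d}{d\epsilon}\phi_\epsilon^*(x)=-h(\nabla\phi^*(x))$ gives $(\nabla\phi^*)_\#(e^{-\phi^*}dx)/Z=\mu$, and adding a constant normalizes $Z$. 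So $\psi=\phi^*$ is the solution. Its essential continuity has to be shown separately, arguing that otherwise a boundary perturbation increases $J$.
\end{itemize}

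\emph{Uniqueness.} I would use Prékopa--Leindler. The functional $\phi\mapsto\log\int e^{-\phi^*}$ is concave along $\phi_t=(1-t)\phi_0+t\phi_1$, since $\phi_t^*$ is bounded by the sup-convolution appearing in Prékopa--Leindler. Any essentially continuous solution is a maximizer; this uses essential continuity again, via the integration by parts above. Equality in Prékopa--Leindler forces $e^{-\phi_0^*}$ and $e^{-\phi_1^*}$ to be translates of each other, which gives uniqueness up to translation.
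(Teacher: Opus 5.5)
The paper does not prove this statement; it quotes it from Cordero-Erausquin--Klartag. Your outline follows their route: integration by parts for necessity, the direct method plus a first variation for existence, and the Pr\'ekopa--Leindler equality case for uniqueness. The corresponding ingredients in the paper are \cref{prop:energy_minimization} and \cref{prop:convexity_estimate}.

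\textbf{The hyperplane condition.} Here there is a genuine error. Your argument never uses essential continuity, and the contradiction you invoke does not exist. Take $\psi=\cvxind{[-1,1]^d}$. Then $\nabla\psi=0$ a.e., $\psi$ is constant in every direction on its domain, and $0<\int e^{-\psi}=2^d<\infty$. Yet $\mm(\psi)=2^d\delta_0$ lies in a hyperplane; this is the paper's prototypical non-example. Constancy along $\theta$ contradicts $\int e^{-\psi}<\infty$ only when $K=\{\psi<\infty\}$ contains a ray in direction $\theta$ or $-\theta$. The remaining case needs essential continuity:
\begin{itemize}
\item Every chord of $\inter K$ parallel to $\theta$ is bounded, and $\psi$ is constant on it.
\item By lower semicontinuity, $\psi$ is finite at the chord's endpoints, so these endpoints lie in $\partial K\cap\{\psi<\infty\}$.
\item The upper endpoints project onto the orthogonal projection of $\inter K$ to $\theta^\perp$, which has positive $\cH^{d-1}$-measure.
\item Projections are $1$-Lipschitz, so $\cH^{d-1}(\partial K\cap\{\psi<\infty\})>0$, contradicting \cref{def:essential_continuity}.
\end{itemize}
You also need centering, proved first, to reduce an affine hyperplane $\{\<y,\theta\>=c\}$ to the case $c=0$.

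\textbf{Uniqueness.} The real content is that an essentially continuous solution $\psi_0$ makes $\varphi_0=\psi_0^*$ a maximizer, and you only assert this. Pointing to the integration by parts does not establish it. The identity you derived (vanishing boundary flux, i.e.\ centering) is also satisfied by the solution $\|\cdot\|^2/2+\cvxind K$ from the paper's introduction. That solution is not essentially continuous, so by \cref{prop:energy_minimization} its Legendre transform is not a maximizer.

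What must be shown is the following. Normalize $\int e^{-\psi_0}=1$, let $\varphi_1$ be an arbitrary, typically unbounded, $\mu$-integrable convex competitor, and set $\varphi_t=(1-t)\varphi_0+t\varphi_1$. You need the one-sided bound $\frac{d}{dt}\big|_{t=0^+}\log\int e^{-\varphi_t^*}\le\int(\varphi_1-\varphi_0)\,d\mu$. Testing the supremum defining $\varphi_t^*(x)$ at $y=\nabla\psi_0(x)$ handles $x\in\{\psi_0<\infty\}$. However, $\varphi_t^*$ can be finite outside $\{\psi_0<\infty\}$. Showing that the mass created there is $o(t)$ is exactly where you need $e^{-\psi_0}$ to vanish pointwise on $\partial\{\psi_0<\infty\}$, not merely in the averaged (flux) sense.

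\textbf{Remaining steps.} These are sound in outline. Your boundary-perturbation idea for the essential continuity of the maximizer can be implemented with a $\mu$-integrable convex perturbation such as $h(y)=(\<\theta,y\>-a)_+$ with $a$ large. Its first-order gain is proportional to $\int_{\partial K}e^{-\psi}\<n_K,\theta\>_+\,d\cH^{d-1}$, where $n_K$ is the outer normal. This is positive for some $\theta$ if essential continuity fails. Its cost $\int(\<\theta,y\>-a)_+\,d\mu$ tends to $0$ as $a\to\infty$.
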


\Cref{thm:existence_uniqueness} does \emph{not} say that $\mm(\psi_1)=\mm(\psi_2) \implies \psi_1(\,\cdot\,)=\psi_2(\,\cdot\,+v)$ for some $v\in\RR^d$; this only holds if \emph{both} $\psi_1$ and $\psi_2$ are essentially continuous; other, not essentially continuous, solutions can exist. For instance, let $\mu$ be the measure supported on a centered convex set $K \subsetneq \RR^d$ with nonempty interior, and with density $e^{-\|\,\cdot\,\|^2/2}$ on $K$. In other words, $\mu$ is a measure with density $e^{-\|\,\cdot\,\|^2/2 - \cvxind K}$ on $\RR^d$, and in particular it is centered and not supported on a hyperplane. Then, in addition to the essentially continuous functions $\mm^{-1}(\mu)$ guaranteed by \cref{thm:existence_uniqueness}, also $\mm\left(\|\,\cdot\,\|^2/2 + \cvxind K\right)=\mu$, and the latter is convex but not essentially continuous.

In this article, given a centered finite positive measure $\mu$ on $\RR^d$ that is not supported on a hyperplane, we  denote by
\begin{equation}
    \label{eq:psimu}
    \psi_\mu \colon \RR^d \to \RR \cup \{\infty\}
\end{equation}
an essentially continuous convex function whose moment measure is $\mu$, i.e., $\psi_\mu\in \mm^{-1}(\mu)$, and
\begin{equation}
    \label{eq:mminvEq}
    \mm^{-1}(\mu)=\{\psi_\mu(\,\cdot\,+v) \mid v\in \RR^d\}\cong\RR^d.
\end{equation}
In this notation, the present article establishes:

\begin{itemize}
    \item The quantitative stability (modulo translations) of the convex function $\psi_\mu$ with respect to the measure $\mu$ (\cref{thm:stability}).
    \item Numerical resolution of the MMP, i.e., rigorous numerical approximation of $\psi_\mu$, when $\mu$ is prescribed (\cref{sec:semidiscrete_mmp}).
\end{itemize}

\subsection{Stability for the MMP}
\label{subsec:intro_stability}

The next result gives quantitative stability for the moment measure problem. Conceptually, this amounts to estimating the operator norm of $\mm^{-1}$ (mapping measures to functions), part of the problem being to choose the right norms.

\begin{theorem}
    \label{thm:stability}
    Let $\mu\not=\nu$ be centered finite measures on $\RR^d$ with equal positive mass, not supported on hyperplanes. Then there exists $v \in \RR^d$ such that, for any constants $R,r$ satisfying
    \begin{align*}
        R &\geq \frac{1}{\mu(\RR^d)} \int_{\RR^d} \|y\|\, d \mu(y), &
        0 &< r \leq \frac{1}{\mu(\RR^d)} \inf_{w \in S^{d-1}} \int_{\RR^d} |\<w, y\>|\, d \mu(y),
    \end{align*}
    one has (recalling \cref{eq:psimu})
    \begin{equation}
        \label{eq:stability}
        \|e^{-\psi_\mu(\,\cdot\,)} - e^{-\psi_{\nu}(\,\cdot\, - v)}\|_{L^1(\RR^d)} \lesssim_{d, R/r} \mu(\RR^d) \left[\frac{W_1(\mu,\nu)}{\mu(\RR^d) R} \log \left(1 + \frac{\mu(\RR^d) R}{W_1(\mu,\nu)}\right)\right]^{1/2},
    \end{equation}
    where $W_1$ denotes the $1$-Wasserstein distance \cref{eq:kantorovich_rubinstein}.
\end{theorem}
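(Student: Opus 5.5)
We follow the variational approach of Cordero-Erausquin--Klartag and Santambrogio. By homogeneity we first normalize: rescaling $\mu,\nu$ by $1/\mu(\RR^d)$ only shifts $\psi$ by a constant $\log\mu(\RR^d)$, and dilating $y\mapsto y/R$ corresponds to $\psi(\,\cdot\,)\mapsto\psi(R\,\cdot\,)$ up to the Jacobian. Both sides of \cref{eq:stability} scale consistently under these operations, so it suffices to treat probability measures with $R=1$ and $r\sim R/r$ fixed.

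The key tool is the functional
\begin{equation*}
    J_\mu(\phi) = \int \phi\, d\mu + \log\int e^{-\phi^*(x)}\,dx
\end{equation*}
on convex $\phi$, where $\phi^*$ is the Legendre transform. Its minimizers are the $\phi=\psi_\mu^*$ up to additive constants and linear terms $\langle v,\cdot\rangle$, which correspond to translations of $\psi$. The Prékopa--Leindler inequality shows that $\phi\mapsto -\log\int e^{-\phi^*}$ is convex along the linear interpolation $\phi_t=(1-t)\phi_0+t\phi_1$. We want a \emph{quantitative} version of this: a strong-convexity-type statement of the form
\begin{equation*}
    \tfrac12\bigl(J(\phi_0)+J(\phi_1)\bigr)-J(\phi_{1/2}) \gtrsim \|e^{-\phi_0^*}-e^{-\phi_1^*(\cdot-v)}\|_{L^1}^2,
\end{equation*}
after suitable normalization (unit mass of $e^{-\phi_i^*}$). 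Such an estimate follows from known stability results for Prékopa--Leindler / Brunn--Minkowski for log-concave functions, for instance those of Ball--Böröczky or Figalli et al. These results yield closeness up to translation, and the translation is exactly the freedom $v$. The constants depend on $d$ and on the non-degeneracy of the densities, which is where $R/r$ enters through bounds on $\psi_\mu$ (controlled mass, a centered inradius-type lower bound, and growth at infinity). These bounds come from the estimates of \cite{cordero2015} relating the first moment and $\inf_w\int|\<w,y\>|d\mu$ to the geometry of $e^{-\psi_\mu}$.

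Next we bound the energy gap. Let $\phi_\mu=\psi_\mu^*$ and $\phi_\nu=\psi_\nu^*$. Then
\begin{equation*}
    J_\mu(\phi_\nu)-J_\mu(\phi_\mu) \le \bigl[J_\mu(\phi_\nu)-J_\nu(\phi_\nu)\bigr]+\bigl[J_\nu(\phi_\mu)-J_\mu(\phi_\mu)\bigr] = \int(\phi_\nu-\phi_\mu)\,d(\mu-\nu),
\end{equation*}
using the minimality of $\phi_\nu$ for $J_\nu$. If $\phi_\mu,\phi_\nu$ were globally Lipschitz, this would be at most $\Lip\cdot W_1(\mu,\nu)$ by \cref{eq:kantorovich_rubinstein}. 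The difficulty is that $\phi=\psi^*$ has gradients in $\overline{\{\psi<\infty\}}$, which may be unbounded, so $\phi$ grows faster than linearly. We therefore truncate: replace $\phi$ by its supremum of affine functions with slopes in a ball $B_\rho$, which is $\rho$-Lipschitz. The error made by this truncation is controlled by the exponential tail of $e^{-\psi}$ outside $B_\rho$ (log-concavity with normalized moments gives decay like $e^{-c\rho}$), together with the first moments of $\mu,\nu$. Optimizing over $\rho\sim\log(1/W_1)$ yields a gap of order $W_1\log(1+1/W_1)$.

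Finally, we combine the two steps. Applying the quantitative convexity to $\phi_0=\phi_\mu$, $\phi_1=\phi_\nu$ for $J_\mu$ gives $J(\phi_{1/2})\ge J(\phi_\mu)$, so the squared $L^1$ distance is at most the energy gap. Taking the square root gives \cref{eq:stability}. The main obstacle is the first step: a clean quantitative Prékopa--Leindler stability with an $L^1$-squared deficit, uniform over the class of densities with controlled $R/r$. This requires a priori bounds on $\psi_\mu$ showing that $e^{-\psi_\mu}$, after translation, lies in a compact family of log-concave densities. We would first try to obtain this by reduction to the Brunn--Minkowski stability for the convex bodies $\{(x,s):s\ge \psi(x)\}$ in dimension $d+1$.
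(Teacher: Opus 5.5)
\textbf{Gap 1: the energy gap you bound is in general infinite.} You apply the convexity estimate to $\varphi_\mu=\psi_\mu^*$ and $\varphi_\nu=\psi_\nu^*$ themselves. The argument then rests on the claim $J_\mu(\varphi_\nu)-J_\mu(\varphi_\mu)\lesssim W_1\log(1+1/W_1)$, and this claim is false.

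When $\nu$ is compactly supported, $\psi_\nu$ is finite with subgradients in $C=\Conv(\supp\nu)$, so $\varphi_\nu\equiv+\infty$ outside $C$. Take $\mu$ uniform on $[-1,1]^2$ and $\nu$ uniform on the $n^2$ cell centers of the grid, which is exactly the kind of $\nu$ the theorem is meant for. Then $\mu$ charges $[-1,1]^2\setminus C$, so $J_\mu(\varphi_\nu)=+\infty$ for every choice of translation, while $W_1(\mu,\nu)\lesssim 1/n$.

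Truncating inside $\int(\varphi_\nu-\varphi_\mu)\,d(\mu-\nu)$ cannot fix this, for two reasons:
\begin{itemize}
\item The cross terms $\int\varphi_\nu\,d\mu$ and $\int\varphi_\mu\,d\nu$ carry a plus sign. Replacing $\varphi$ by your smaller slope-truncation $\varphi^\rho$ lowers them, so you get a lower bound, not an upper bound.
\item The discrepancies $\int(\varphi_\nu-\varphi_\nu^\rho)\,d\mu$ and $\int(\varphi_\mu-\varphi_\mu^\rho)\,d\nu$ integrate a potential against the \emph{other} measure. No tail of $e^{-\psi}$ controls them.
\end{itemize}

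The truncation has to be applied to the functions fed into the convexity estimate, paying extra $L^1$ terms in a triangle inequality. The paper does this with the $K$-MMP, $K=\overline{B_L}$:
\begin{itemize}
\item The minimizers $\varphi_{\mu,K},\varphi_{\nu,K}$ of $\cE_\mu,\cE_\nu$ both lie in the convex class $\Lip_K(\RR^d)$. Testing each against their midpoint and adding the two inequalities bounds the deficit by $\frac14\int(\varphi_{\nu,K}-\varphi_{\mu,K})\,d(\mu-\nu)\le\frac14\diam(K)W_1(\mu,\nu)$.
\item Separately, $\|e^{-\psi_\mu}-e^{-\psi_{\mu,K}(\cdot-v)}\|_{L^1}^2\lesssim_d\cE_\mu(\varphi)-\cE_\mu(\varphi_\mu)$ for every $\varphi\in\Lip_K(\RR^d)$. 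The competitor $\varphi(y)=\sup_{x\in K}(\<x,y\>-\psi_\mu(x-v'))$ has excess at most $-\log\bigl(1-\int_{\RR^d\setminus B_L}e^{-\psi_\mu(x-v')}\,dx\bigr)$. The same holds for $\nu$.
\item Choosing $L\sim\frac{\sqrt d}{r}\log(1+R/W_1(\mu,\nu))$ balances the terms.
\end{itemize}
A variant closer to your idea also works. Apply convexity to $\varphi_\mu$ and $\varphi_\nu^\rho$, and test the minimality of $\varphi_\nu$ against the Lipschitz function $\varphi_\mu^\rho$. This bounds the gap by $2\rho W_1(\mu,\nu)$ plus two tail terms. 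Replacing $e^{-\psi_\nu}$ by its normalized restriction to $B_\rho$ then costs twice the tail in $L^1$.

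\textbf{Gap 2: the tail bound is not justified.} You need, after a suitable translation $v'$, $\int_{\RR^d\setminus B_\rho}e^{-\psi_\mu(x-v')}\,dx\lesssim_{d,R/r}(1+(r\rho)^{d-1})e^{-cr\rho}$. The rate must be proportional to $r$, which is a quantity attached to $\mu=\mm(\psi_\mu)$, not to the density $e^{-\psi_\mu}$. Log-concavity alone only gives exponential decay at a rate set by the unknown covariance of $e^{-\psi_\mu}$. Relating that spread to $\mu$ is exactly the nontrivial point.

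The paper proves this as a Wang--Zhu type a priori estimate (\cref{prop:growth_estimate}): $\psi_\mu(x-v)\ge\frac{r}{2\sqrt d}\|x\|-C_d-d\log R$. The proof goes as follows:
\begin{itemize}
\item A Borsuk--Ulam argument (\cref{lemma:separating_hyperplane}) splits $\mu$ into pieces whose barycenters lie on the coordinate axes, at distance at least $r/2$ from $0$.
\item At these $2d$ points, $\varphi_\mu+\<v,\cdot\>$ is bounded using Jensen and the competitor $\|\cdot\|/R$.
\item The convex hull of these points contains $B_{r/(2\sqrt d)}$.
\end{itemize}
This is where the dependence on $R/r$ comes from.

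Conversely, the step you single out as the main obstacle is not one. \cite[Corollary~1.7]{figalli2025} gives the squared-$L^1$ deficit bound with a constant depending only on $d$, for arbitrary integrable functions satisfying the Prékopa--Leindler hypothesis. No compactness of the family of densities or a priori bounds are needed there.

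Also, $J_\mu$ should be $\int\varphi\,d\mu-\log\int e^{-\varphi^*}$. With $+\log$ it is neither invariant under adding constants nor convex.
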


\paragraph{Notation}

Throughout, $\alpha \lesssim_m \beta$ if there exists a constant $C_m > 0$, depending only on $m$, such that $\alpha \leq C_m \beta$.

A few comments are in order.
\begin{itemize}
    \item The assumptions on $\mu$ and $\nu$ are due to \cref{thm:existence_uniqueness}.
    \item   The role of $v$ on the left-hand side of \cref{eq:stability} is explained by \cref{eq:mminvEq}. The exponentials compensate for the fact that $\psi_\mu$ and $\psi_\nu$ can have different growth at infinity. Thus, the left-hand side is a measure of the distance between $\mm^{-1}(\mu)$ and $\mm^{-1}(\nu)$.
    \item On the right-hand side, $W_1$ is a standard ``$L^1$ notion'' of distance between two measures with finite first moments and equal mass,
    \begin{equation*}
        W_1(\mu,\nu) := \inf_{\gamma \in \Pi(\mu,\nu)} \int_{\RR^d \times \RR^d} |x - y|\, d \gamma(x, y),
    \end{equation*}
    where $\Pi(\mu,\nu)$ denotes so-called admissible transport plans from $\mu$ to $\nu$, i.e., the set of measures on $\RR^d \times \RR^d$ with marginals $\mu$ and $\nu$. In our proof we will rather use the dual characterization of the $1$-Wasserstein distance (see, e.g.,~\cite{santambrogio2015,villani2009}):
    \begin{equation}
        \label{eq:kantorovich_rubinstein}
        W_1(\mu,\nu) = \inf \bigg\{\int_{\RR^d} \varphi\, d \mu - \int_{\RR^d} \varphi\, d \nu \,\mathop{\Big|}\, \varphi \colon \RR^d \to \RR \text{ $1$-Lipschitz}\bigg\}.
    \end{equation}
    \item The estimate \cref{eq:stability} is compatible with the following easily verified invariance properties of the MMP:
    \begin{align}
        \label{eq:invarianceMMPeq}
        \mm(\psi+C) &= e^{-C} \mm(\psi), &
        \mm(\psi(\,\cdot\, / \lambda)) &= \lambda^d \mm(\psi)(\lambda \,\cdot\,).
    \end{align}
\end{itemize}

In order to prove \cref{thm:stability}, we consider an intermediate problem in which we look for a convex function which is not necessarily essentially continuous, but whose support is included in some given \emph{compact} convex set $K$ with nonempty interior. This intermediate problem, which we call the \emph{$K$-moment measure problem ($K$-MMP)}, may be interpreted as replacing $\RR^d$ by $K$ in the PDE formulation \cref{eq:pde_formulation}. When $K$ is large, it can be interpreted as an approximation of the space $\RR^d$. In addition to making the proof of \cref{thm:stability} cleaner, we believe that the $K$-MMP is of independent interest~: for instance, it arises naturally in some approaches to the numerical resolution of the moment measure problem~\cite{ji2024thesis}. It has only been briefly discussed in the literature~\cite{machado2025,santambrogio2016}, so we give some details in \cref{subsec:compact_problem}.

In this approach, the proof of \cref{thm:stability} naturally boils down to establishing two estimates:
\begin{itemize}
    \item A quantitative stability estimate (\cref{prop:lipschitz_lipschitz_estimate}) for the $K$-MMP just discussed.
    \item An error estimate (\cref{prop:lipschitz_nonlipschitz_estimate}) between the $K$-MMP and the original MMP.
\end{itemize}

\subsection{Numerical resolution}
\label{subsec:intro_num}

We study a numerical method consisting in approximating $\psi_\mu$ by the function $\psi_\nu$, where $\nu$ is a finitely supported measure that should be chosen close to $\mu$ in the $1$-Wasserstein distance. Thanks to $\nu$ being finitely supported, the problem of computing $\psi_\nu$ becomes a discrete problem that can be solved computationally.

This numerical method can be interpreted as a rather direct translation of the idea of semidiscrete optimal transport~\cite{aurenhammer1998,kitagawa2019,
lindsey2017,merigot2011,oliker1989}  to the setting of moment measures. Its validity in this setting is justified by our stability estimate, \cref{thm:stability}.

In our experiments, we compute $\psi_\nu$ using a damped Newton method, that we describe in \cref{sec:semidiscrete_mmp}. We investigate the effect of the choice of $\nu$ on the convergence of this Newton method and on the approximation error between $\psi_\mu$ and $\psi_\nu$. We observe that, experimentally, the rates of convergence of the numerical method with respect to $W_1(\mu, \nu)$ are typically better than what is guaranteed by \cref{thm:stability}, and that the convergence can be improved by applying some specific rules for the construction of the empirical measure $\nu$.

\subsection{Relation to other work}
\label{subsec:intro_other_work}

The proof of \cref{thm:stability} makes use of the sharp quantitative stability estimate for the Prékopa--Leindler inequality recently obtained by Figalli--van Hintum--Tiba~\cite{figalli2025}. It is also inspired by quantitative stability estimates for the optimal transport problem, see e.g.~\cite{delalande2023}; especially, in both settings, the $1$-Wasserstein distance is made to appear using Kantorovich--Rubinstein duality. The two main differences between \cref{eq:stability} and similar estimates in the optimal transport setting, such as the one in~\cite{delalande2023}, are the weaker norm appearing on the left-hand side of \cref{eq:stability}, and the logarithm on the right-hand side. The former originates from the estimate in~\cite{figalli2025}, and can be explained by the fact that the energy functional associated to the moment measure problem is different from the one associated to the optimal transport problem. The logarithm appears because the Legendre transform of the potential $\psi_\mu$ in the moment measure problem is typically not Lipschitz continuous; our intermediary result \cref{prop:lipschitz_nonlipschitz_estimate} may be interpreted as an estimate of this lack of Lipschitz continuity.

Quantitative stability of the moment measure problem was also recently and independently studied by Machado--Ramos~\cite{machado2025}. A version of our \cref{prop:lipschitz_lipschitz_estimate} can also be found in~\cite{machado2025}, while, to our knowledge, \cref{prop:lipschitz_nonlipschitz_estimate}, and thus \cref{thm:stability}, are original. Some stability estimates for the moment measure problem in the standard, non-compact case are also proved in~\cite{machado2025}, but under different assumptions and with smaller exponents. Observe that~\cite{machado2025} also features estimates for the error between the Legendre transforms of $\psi_\mu$ and $\psi_\nu$, which is a topic that we do not discuss herein. Quantitative stability for a quadratically regularized version of the moment measure problem was also studied by Delalande--Farinelli in~\cite{delalande2026}.

The idea of adapting techniques from semidiscrete optimal transport to the numerical resolution of the moment measure problem is rather natural. Klartag--Mérigot--Santambrogio previously performed numerical experiments using this approach and presented results in some talks, e.g.,~\cite{klartag2017}.

Finite-difference discretization of the system \cref{eq:pde_formulation} was first considered in the thesis of Ji~\cite{ji2024thesis}. In a forthcoming article joint with Ji~\cite{bjr} we synthesize our approach here with his, and also pursue applications to geometry, such as the Ricci flow and K\"ahler--Ricci solitons.

It is worth pointing out that the (special) moment measure problems arising in Kähler geometry have attracted intense efforts in the mathematical physics community, as they describe Kähler--Ricci solitons that in turns can be used to construct Klebanov--Tseytlin like supergravity solutions, see, e.g., \cite{bunch2008,donaldson2009,doran2008,douglas2008,hall2016,headrick2005,headrick2007}. The numerical results in those works employ completely different methods, based mainly on quantization (as opposed to PDE, as in the present article). Moreover, some of these works have attempted to use such solitons to study the Ricci flow, that itself has applications to renormalization flow in the mathematical physics literature.

\subsection{Outline}
\label{subsec:intro_outline}

In \cref{sec:moment_measures}, after briefly reviewing some aspects of the theory of moment measures, we discuss the intermediary problem involving a compact approximation of $\RR^d$ that we use in the proof of \cref{thm:stability}. \Cref{sec:stability_estimate} is devoted to the proof of \cref{thm:stability}, and of the intermediary results \cref{prop:lipschitz_lipschitz_estimate} and \cref{prop:lipschitz_nonlipschitz_estimate}. In \cref{sec:semidiscrete_mmp}, we discuss the approximation of $\psi_{\mu}$ by $\psi_\nu$, where $\nu$ is a finitely supported measure, and a Newton method for computing $\psi_\nu$. We present some numerical results in \cref{sec:numerical_experiments}.

\section{Moment measure problems with Lipschitz constraints}
\label{sec:moment_measures}

In \cref{subsec:variational_formulation}, we briefly review the variational formulation of the moment measure problem. In \cref{subsec:compact_problem}, we study a variant of the problem involving an approximation of $\RR^d$ by a compact domain, which will be useful for our presentation of the proof of \cref{thm:stability} in \cref{sec:stability_estimate}.

\subsection{Variational formulation}
\label{subsec:variational_formulation}

For any function $\varphi \colon \RR^d \to \RR \cup \{\infty\}$, let
\begin{equation}
    \label{eq:cI}
    \cI(\varphi) := -\log \int_{\RR^d} e^{-\varphi^*(x)}\, d x,
\end{equation}
where
\begin{equation*}
    \varphi^* \colon x \mapsto \sup_{y\in\RR^d} (\<x,y\> - \varphi(y))
\end{equation*}
denotes the Legendre transform of $\varphi$. Observe that $\cI(\varphi)$ can take values $\infty$ and $-\infty$ (but is finite as soon as the measure $e^{-\varphi^*(x)}\, d x$ has finite positive mass). For any probability measure $\mu$ on $\RR^d$ and any $\mu$-integrable function $\varphi \colon \RR^d \to \RR \cup \{\infty\}$, let
\begin{equation}
    \label{eq:cE_mu}
    \cE_\mu(\varphi) := \int_{\RR^d} \varphi(y)\, d \mu(y) + \cI(\varphi).
\end{equation}
Observe that we only define the functional $\cE_\mu$ for measures $\mu$ that are probability measures. This ensures that $\cE_\mu$ is invariant under addition of a constant (using that adding a constant to $\varphi$ amounts to subtracting it from $\varphi^*$),
\begin{equation*}
    \cE_\mu(\varphi + c) = \cE_\mu(\varphi), \quad \forall c \in \RR.
\end{equation*}
If the probability measure $\mu$ is centered, then $\cE_\mu$ is also invariant under addition of a linear function (using that adding a linear function to $\varphi$ amounts to a translation of $\varphi^*$),
\begin{equation*}
    \cE_\mu(\varphi + \<v,\,\cdot\,\>) = \cE_\mu(\varphi), \quad \forall v \in \RR^d.
\end{equation*}

The functional $\cE_\mu$ is related to the MMP. This idea goes back to Berman--Berndtsson~\cite[subsection~2.6]{berman2013}. The following statement is not explicitly formulated by Cordero-Erausquin--Klartag but follows from the proof of the existence part of~\cite[Theorem~2]{cordero2015} and from~\cite[Remark~13]{cordero2015}. (In the derivation, note that by \cref{eq:pushforwarddefEq},
\begin{equation}
    \label{eq:totalmassepsiEq}
    \int_{\RR^d} e^{-\psi(x)}\, d x = \mm(\psi)(\RR^d).
\end{equation}
After easily excluding the case $\int_{\RR^d} e^{-\varphi^*(x)}\, d x \in \{0, \infty\}$, we may always assume, up to adding a constant to $\varphi$, that $\int_{\RR^d} e^{-\varphi^*(x)}\, d x = 1$. Then, the equivalence follows from op. cit.)

\begin{proposition}
    \label{prop:energy_minimization}
    Let $\mu$ be a centered probability measure on $\RR^d$, not supported on a hyperplane. Let $\varphi \colon \RR^d \to \RR \cup \{\infty\}$ be a lower semicontinuous convex function. The following are equivalent:
    \begin{itemize}
        \item $\varphi$ is $\mu$-integrable and
        minimizes $\cE_\mu$ over all $\mu$-integrable functions.
        \item $\varphi^*$ is essentially continuous (\cref{def:essential_continuity}), $0 < \int_{\RR^d} e^{-\varphi^*(x)}\, d x < +\infty$, and
        \begin{equation}
            \label{eq:mmphistarintEq}
            \mm(\varphi^*)=\left(\int_{\RR^d} e^{-\varphi^*(x)}\, d x\right) \mu.
        \end{equation}
    \end{itemize}
\end{proposition}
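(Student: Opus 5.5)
We reduce to the case $\int_{\RR^d} e^{-\varphi^*(x)}\, d x = 1$ and then combine the variational argument of Cordero-Erausquin--Klartag with the uniqueness part of \cref{thm:existence_uniqueness}.

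\textbf{Normalization.} Both conditions are invariant under $\varphi \mapsto \varphi + c$:
\begin{itemize}
    \item $\cE_\mu$ is unchanged, since $\mu$ is a probability measure.
    \item $\varphi^*$ becomes $\varphi^* - c$, so essential continuity is preserved.
    \item By \cref{eq:invarianceMMPeq}, $\mm(\varphi^* - c) = e^{c}\,\mm(\varphi^*)$, and $\int e^{-\varphi^*}$ is multiplied by the same factor $e^c$, so \cref{eq:mmphistarintEq} is preserved.
\end{itemize}
We first exclude the degenerate masses. If $\int e^{-\varphi^*} = \infty$, then $\cI(\varphi) = -\infty$, so $\cE_\mu(\varphi) = -\infty$ or is undefined; this would contradict finiteness of $\inf \cE_\mu$, which we establish below. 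If $\int e^{-\varphi^*} = 0$, then $\cI(\varphi) = +\infty$ and $\varphi$ is clearly not a minimizer. Hence in either direction we may assume $\int e^{-\varphi^*} = 1$, and \cref{eq:mmphistarintEq} reads $\mm(\varphi^*) = \mu$.

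\textbf{Lower bound on $\cE_\mu$.} For a probability measure $\mu$ that is centered and not supported on a hyperplane, we need $\inf \cE_\mu > -\infty$. This follows from the coercivity estimate in \cite{cordero2015}, or equivalently from a Prékopa--Leindler type bound. As a consequence, minimizers exist among convex lower semicontinuous functions normalized by $\int e^{-\varphi^*} = 1$ and (using the translation invariance from centering) $\varphi(0) = 0$. This is exactly the existence proof in \cite[Theorem~2]{cordero2015}.

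\textbf{Minimizer $\Rightarrow$ solution.} Let $\varphi$ be a minimizer. We take a first variation in the direction of an arbitrary bounded continuous $h$, setting $\varphi_t := (\varphi + t h)^{**}$. The key computation, \cite[Remark~13]{cordero2015}, is that
\begin{equation*}
    \frac{d}{dt}\Big|_{t=0} \int e^{-(\varphi + t h)^*} = \int h(\nabla \varphi^*)\, e^{-\varphi^*},
\end{equation*}
obtained by a Danskin-type differentiation of the supremum defining $(\varphi + t h)^*$ at points of differentiability. Since replacing $\varphi + t h$ by its convexification does not increase $\int (\cdot)\, d\mu$ and leaves the Legendre transform unchanged, $\cE_\mu(\varphi_t) \le \cE_\mu(\varphi + t h)$. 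Minimality then gives
\begin{equation*}
    \int h\, d\mu = \int h(\nabla\varphi^*)\, e^{-\varphi^*},
\end{equation*}
that is, $\mm(\varphi^*) = \mu$. Essential continuity of $\varphi^*$ is the delicate part of \cite[Theorem~2]{cordero2015}. The idea is that if $\varphi^*$ had a boundary jump of positive $\cH^{d-1}$ measure, one could decrease $\cE_\mu$ by perturbing $\varphi$ linearly at infinity, that is, by pushing the domain of $\varphi^*$ outward. We will cite this step rather than reprove it; it is the main obstacle.

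\textbf{Solution $\Rightarrow$ minimizer.} Suppose $\varphi^*$ is essentially continuous with $\mm(\varphi^*) = \mu$ and unit mass. By the first part there is a minimizer $\tilde\varphi$, and it too solves the problem with an essentially continuous conjugate. By the uniqueness part of \cref{thm:existence_uniqueness}, $\varphi^* = \tilde\varphi^*(\cdot + v)$ for some $v \in \RR^d$. Hence $\varphi = \tilde\varphi - \<v,\cdot\>$, using that $\varphi$ is lower semicontinuous and convex so $\varphi = \varphi^{**}$. Since $\mu$ is centered, $\cE_\mu$ is invariant under adding linear functions, so $\cE_\mu(\varphi) = \cE_\mu(\tilde\varphi)$ and $\varphi$ is a minimizer. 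Its $\mu$-integrability follows from that of $\tilde\varphi$, since linear functions are $\mu$-integrable: $\mu$ has a first moment because it is centered.
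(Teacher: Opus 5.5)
Your proposal is correct and follows essentially the same route as the paper, which likewise normalizes $\int_{\RR^d} e^{-\varphi^*(x)}\,dx=1$ by adding a constant and then relies on the existence proof (including the essential-continuity step) and the uniqueness results of Cordero-Erausquin--Klartag; you make the first-variation and uniqueness steps explicit and, like the paper, cite the essential-continuity step. One harmless slip: you cannot impose $\varphi(0)=0$ together with $\int_{\RR^d} e^{-\varphi^*(x)}\,dx=1$, since adding a linear function changes neither quantity and both are fixed by the same additive constant, but your argument only ever uses the second normalization.
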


There may also exist minimizers of $\cE_\mu$ that are not lower semicontinuous or not convex (take, e.g., a function $\varphi$ such that $\varphi^{**}$ minimizes $\cE_\mu$ but $\varphi(y) > \varphi^{**}(y)$ at a point $y$ where $\mu$ has no atom); however, it is easily verified that for any such minimizer $\varphi$, the lower semicontinuous function $\varphi^{**}$ also minimizes $\cE_\mu$, since $\cE_\mu(\varphi^{**}) \leq \cE_\mu(\varphi)$.

\subsection{\texorpdfstring{The $K$-MMP}{The 𝐾-MMP}}
\label{subsec:compact_problem}

Given a compact convex set $K \subset \RR^d$ with nonempty interior, let us define the following variants of the functionals $\cI$ and $\cE_\mu$ from \cref{eq:cI} and \cref{eq:cE_mu}. For any function $\varphi \colon \RR^d \to \RR \cup \{\infty\}$, let
\begin{equation*}
    \cI_K(\varphi) := -\log \int_K e^{-\varphi^*(x)}\, d x.
\end{equation*}
For any probability measure $\mu$ on $\RR^d$ and any $\mu$-integrable function $\varphi \colon \RR^d \to \RR \cup \{\infty\}$, let
\begin{equation*}
    \cE_{\mu, K}(\varphi) := \int_{\RR^d} \varphi(y)\, d \mu(y) + \cI_K(\varphi).
\end{equation*}

One can consider the problem of minimizing the energy $\cE_{\mu, K}$ over $\mu$-integrable functions. Let us notice that this new problem is related to the minimization of $\cE_\mu$ under a Lipschitz constraint.

Let
\begin{equation*}
    \Lip_K(\RR^d) := \{\varphi \colon \RR^d \to \RR \mid \forall y_1, y_2 \in \RR^d,\, \varphi(y_2) - \varphi(y_1) \leq \sigma_K(y_2 - y_1)\},
\end{equation*}
where $\sigma_K$ is the support function of $K$, i.e.,
\begin{equation*}
    \sigma_K(y) := \sup_{x \in K} \<x, y\>.
\end{equation*}
Observe that
\begin{equation}
    \label{eq:cEKcEeqLipeq}
    \varphi \in \Lip_K(\RR^d) \implies \cE_{\mu, K}(\varphi) = \cE_\mu(\varphi),
\end{equation}
since then $\varphi^*(x) = \infty$ for all $x \in \RR^d \setminus K$. Moreover, for any $\mu$-integrable function $\varphi \colon \RR^d \to \RR \cup \{\infty\}$ such that $\cE_{\mu, K}(\varphi) < \infty$,
\begin{equation*}
    \varphi_K \colon y \mapsto \sup_{x \in K} (\<x, y\> - \varphi^*(x))\in\Lip_K(\RR^d)
\end{equation*}
(interpreted as the largest convex function satisfying $\varphi_K \leq \varphi$ and $\varphi_K \in \Lip_K(\RR^d)$), and
\begin{equation*}
    \cE_\mu(\varphi_K) = \cE_{\mu, K}(\varphi_K) \leq \cE_{\mu, K}(\varphi).
\end{equation*}
Thus:
\begin{proposition}
    \label{prop:lipschitz_minimization_equivalence}
    Let $K \subset \RR^d$ be a compact convex set with nonempty interior, and let $\mu$ be a probability measure on $\RR^d$. If a function of $\Lip_K(\RR^d)$ minimizes $\cE_\mu$ over $\Lip_K(\RR^d)$, then it also minimizes $\cE_{\mu, K}$ over all $\mu$-integrable functions with values in $\RR \cup \{\infty\}$.
\end{proposition}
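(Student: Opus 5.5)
Let $\bar\varphi \in \Lip_K(\RR^d)$ minimize $\cE_\mu$ over $\Lip_K(\RR^d)$, and let $\varphi \colon \RR^d \to \RR \cup \{\infty\}$ be any $\mu$-integrable function. The goal is to show $\cE_{\mu,K}(\bar\varphi) \leq \cE_{\mu,K}(\varphi)$.

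First, if $\cE_{\mu,K}(\varphi) = \infty$ there is nothing to prove. Note also that $\cE_{\mu,K}(\varphi) = -\infty$ cannot occur. Indeed, $\int_K e^{-\varphi^*}$ is finite, because $\varphi^*$ is bounded below on the compact set $K$. To see this, pick any $y_0$ with $\varphi(y_0) < \infty$, which exists by $\mu$-integrability. Then $\varphi^*(x) \geq \<x,y_0\> - \varphi(y_0)$.

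So we may assume $\cE_{\mu,K}(\varphi) < \infty$. Then $\int_K e^{-\varphi^*} > 0$, so $\varphi^* < \infty$ on a subset of $K$ of positive measure. Form $\varphi_K(y) = \sup_{x \in K}(\<x,y\> - \varphi^*(x))$, as defined just before the statement. We need three facts about it.

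First, $\varphi_K$ is finite everywhere. It is $> -\infty$ because $\varphi^*$ is finite somewhere in $K$. It is $< \infty$ because $\varphi^*$ is bounded below on $K$ and $K$ is bounded.

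Second, $\varphi_K$ satisfies the Lipschitz condition. For each $x \in K$ we have $\<x,y_2\> - \varphi^*(x) \leq \<x,y_1\> - \varphi^*(x) + \sigma_K(y_2-y_1)$. Taking the supremum over $x \in K$ gives $\varphi_K(y_2) - \varphi_K(y_1) \leq \sigma_K(y_2 - y_1)$, so $\varphi_K \in \Lip_K(\RR^d)$.

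Third, $\varphi_K \leq \varphi^{**} \leq \varphi$, since the supremum defining $\varphi_K$ runs over $K$ only.

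Next we compare energies. Since $\varphi_K \leq \varphi$, we get $\int \varphi_K\, d\mu \leq \int \varphi\, d\mu$. The function $\varphi_K$ is $\mu$-integrable: it is bounded above by $\varphi$ and below by an affine function $\<x_0,\cdot\> - \varphi^*(x_0)$. For the integrability of that affine minorant, I will assume $\mu$ has a finite first moment, which is implicit in the setting. For the $\cI_K$ term, $\varphi_K$ is the Legendre transform of $\varphi^* + \cvxind K$. Hence $(\varphi_K)^* = (\varphi^* + \cvxind K)^{**} \leq \varphi^*$ on $K$, so $\cI_K(\varphi_K) \leq \cI_K(\varphi)$.

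Combining with \cref{eq:cEKcEeqLipeq} gives
\begin{equation*}
\cE_\mu(\varphi_K) = \cE_{\mu,K}(\varphi_K) \leq \cE_{\mu,K}(\varphi).
\end{equation*}
By minimality of $\bar\varphi$ over $\Lip_K(\RR^d)$, and since $\varphi_K \in \Lip_K(\RR^d)$, we have $\cE_\mu(\bar\varphi) \leq \cE_\mu(\varphi_K)$. Applying \cref{eq:cEKcEeqLipeq} once more,
\begin{equation*}
\cE_{\mu,K}(\bar\varphi) = \cE_\mu(\bar\varphi) \leq \cE_{\mu,K}(\varphi),
\end{equation*}
which is the claim.

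The main obstacle is the bookkeeping needed to make every quantity well defined: finiteness of $\varphi_K$, its $\mu$-integrability, and the exclusion of the degenerate value $-\infty$. The integrability of $\varphi_K$ is the most delicate of these, and I handle it with the affine lower bound together with the first-moment assumption on $\mu$.
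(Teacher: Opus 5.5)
Your route is the paper's: build $\varphi_K\in\Lip_K(\RR^d)$ with $\varphi_K\le\varphi$ and $\cI_K(\varphi_K)\le\cI_K(\varphi)$, use \cref{eq:cEKcEeqLipeq} to get $\cE_\mu(\varphi_K)=\cE_{\mu,K}(\varphi_K)\le\cE_{\mu,K}(\varphi)$, and conclude by minimality. Your checks of finiteness, of the one-sided Lipschitz condition and of the $\cI_K$ comparison are correct; in fact $(\varphi_K)^*=\varphi^*$ on $K$. They are also more careful than the paper's sketch, which never addresses whether $\varphi_K$ is $\mu$-integrable.

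The one real shortfall is the finite-first-moment assumption. It is not a hypothesis of the statement; it only enters from \cref{prop:lipschitz_minimizer} on. Without it, $\varphi_K$ can genuinely fail to be $\mu$-integrable. Take $d=1$, $K=[-1,1]$, and $\mu$ with density $(1+y)^{-2}$ on $[0,\infty)$. Set $\varphi=-y/2$ on $\{2^k\}_{k\ge0}$, $\varphi=0$ on the rest of $[0,\infty)$, and $\varphi=\infty$ on $(-\infty,0)$. Then $\varphi=0$ $\mu$-a.e. and $\cE_{\mu,K}(\varphi)=\log 2$, but $\varphi_K(y)=-y/2$ on $[0,\infty)$, so $\int\varphi_K\,d\mu=-\infty$.

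You do not need the assumption. Test minimality against $\varphi_M:=\max(\varphi_K,\bar\varphi-M)$ for $M>0$. It lies in $\Lip_K(\RR^d)$, since the one-sided condition is preserved under maxima and under adding constants. It is $\mu$-integrable, because $|\varphi_M|\le|\varphi|+|\bar\varphi|+M$ $\mu$-a.e. Finally, $\varphi_M\ge\varphi_K$ gives $\cI_K(\varphi_M)\le\cI_K(\varphi_K)\le\cI_K(\varphi)$. Using $\varphi_K\le\varphi$ and \cref{eq:cEKcEeqLipeq} twice,
\[
\cE_{\mu,K}(\bar\varphi)=\cE_\mu(\bar\varphi)\le\cE_\mu(\varphi_M)\le\int_{\RR^d}\max(\varphi,\bar\varphi-M)\,d\mu+\cI_K(\varphi).
\]
Now let $M\to\infty$ and apply dominated convergence with bound $|\varphi|+|\bar\varphi|$. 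This yields $\cE_{\mu,K}(\bar\varphi)\le\cE_{\mu,K}(\varphi)$ for an arbitrary probability measure $\mu$, as the statement requires.
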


Next, we turn to the existence of minimizers of $\cE_{\mu, K}$ over $\Lip_K(\RR^d)$.

To that end, we employ standard arguments, similar to the ones used, e.g., in~\cite{cordero2015} for the study of the minimization of $\cE_\mu$, with the difference that the analysis is simpler here due compactness of $K$. For instance, observe that we do not need to assume that the measure $\mu$ is centered, nor that it is not supported on a hyperplane.

\begin{proposition}
    \label{prop:lipschitz_minimizer}
    Let $K \subset \RR^d$ be a compact convex set with nonempty interior, and let $\mu$ be a probability measure on $\RR^d$ with finite first moments. There exists a convex function $\varphi_{\mu, K} \in \Lip_K(\RR^d)$ that satisfies $\int_{\RR^d} e^{-(\varphi_{\mu, K})^*(x)}\, d x = 1$ and that minimizes $\cE_\mu$ over $\Lip_K(\RR^d)$ (equivalently, that minimizes $\cE_{\mu, K}$ over all $\mu$-integrable functions with values in $\RR \cup \{\infty\}$).
\end{proposition}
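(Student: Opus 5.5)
We argue by the direct method, working on the convex functions in $\Lip_K(\RR^d)$ and exploiting the two invariances of $\cE_\mu$: adding a constant to $\varphi$, and replacing $\varphi$ by its convex envelope. First, we reduce to convex functions. For $\varphi \in \Lip_K(\RR^d)$, the function $\varphi_K = \varphi^{**}$ (restricted to $K$-duality) is convex, lies in $\Lip_K(\RR^d)$, is dominated by $\varphi$, and has the same Legendre transform. Hence $\cE_\mu(\varphi_K) \leq \cE_\mu(\varphi)$, so it suffices to minimize over convex elements of $\Lip_K(\RR^d)$. Every such $\varphi$ is finite and $\diam$-type Lipschitz, with $|\varphi(y)-\varphi(y')| \leq \max_{x\in K}\|x\|\,\|y-y'\|$, so it is $\mu$-integrable since $\mu$ has finite first moment. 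Also $\varphi^* = \infty$ outside $K$, and on $K$ the function $\varphi^*$ is convex. Normalizing by a constant, we may impose $\varphi(0)=0$. Then $\varphi^*(x) \geq -\varphi(0) = 0$ on $K$, and $\varphi^*$ is bounded above on $K$ by $\sup_{y}(\<x,y\> - \sigma_K(y))$. That quantity equals $0$ for $x\in K$, because $\varphi(y) \geq \varphi(0) - \sigma_K(-y)$ gives $\varphi^*(x) \leq \sup_y(\<x,y\>+\sigma_K(-y)) = \sigma_{K}$-conjugate $= 0$ on $K$ (using $\<x,y\> \leq \sigma_K(y)$, with $x\in K$ and $-y$ replaced appropriately). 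So $\varphi^* = 0$ on $K$ up to the bounds, and $\int_K e^{-\varphi^*}$ lies in $[\,\cdot\,, \Vol(K)]$ with explicit bounds. Hence $\cI_K$ is finite and uniformly controlled, and $\cE_\mu$ is bounded below on the normalized class. Indeed, $\int \varphi\, d\mu \geq -\int \sigma_K(-y)\,d\mu(y) > -\infty$ and $\cI_K(\varphi) \geq -\log\Vol(K)$.

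Next comes compactness. Take a minimizing sequence $\varphi_n$ of normalized convex functions in $\Lip_K(\RR^d)$. They are uniformly Lipschitz and vanish at $0$, so by Arzelà--Ascoli a subsequence converges locally uniformly to some $\varphi$. This limit is convex, satisfies $\varphi(0)=0$, and lies in $\Lip_K(\RR^d)$, since the defining inequality is closed under pointwise limits.

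Then we establish lower semicontinuity. For the linear term, the bound $|\varphi_n(y)| \leq C\|y\|$ and dominated convergence, which applies because of the finite first moment, give $\int\varphi_n\,d\mu \to \int \varphi\,d\mu$. For $\cI_K$, local uniform convergence of $\varphi_n$ together with the uniform Lipschitz bound implies $\varphi_n^* \to \varphi^*$ pointwise on $\inter K$. The point here is that for $x\in\inter K$ the supremum defining $\varphi_n^*(x)$ is attained in a fixed ball, because $\<x,y\>-\varphi_n(y) \leq \<x,y\> - \sigma_K(\ldots)$ decays linearly in $\|y\|$ at a rate depending on the distance from $x$ to $\partial K$. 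Since $0 \leq \varphi_n^*$ on $K$, we have $e^{-\varphi_n^*}\leq 1$, and dominated convergence gives $\cI_K(\varphi_n)\to\cI_K(\varphi)$ (the boundary $\partial K$ is Lebesgue-null). Therefore $\varphi$ is a minimizer. Finally, we add the constant $-\log\int_K e^{-\varphi^*}$ (more precisely, subtract $\cI_K$) to obtain $\int_{\RR^d} e^{-(\varphi_{\mu,K})^*}=1$, where $\varphi^*=\infty$ off $K$. The equivalent formulation of the minimality then follows from \cref{prop:lipschitz_minimization_equivalence}.

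The main obstacle is the convergence $\varphi_n^*\to\varphi^*$ on $\inter K$, that is, localizing the supremum uniformly in $n$. We plan to handle it via the coercivity estimate
\[
\<x,y\>-\varphi_n(y) \leq \<x,y\> - \sigma_{K}(y) \cdot 0 - \bigl(\sigma_K(y)-\<x,y\>\bigr) + \sigma_K(y)
\]
rewritten as $\varphi_n(y) \geq -\sigma_K(-y)$. Combined with $\sigma_K(y) - \<x,y\> \geq \mathrm{dist}(x,\partial K)\|y\|$, and applying this to the function $\varphi_n - \<x,\cdot\>$, this estimate confines near-maximizers to a bounded set.
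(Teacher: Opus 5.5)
Your overall architecture matches the paper's: convexify, normalize $\varphi_n(0)=0$, extract a locally uniform limit, pass to the limit in the linear term by dominated convergence, dominate $e^{-\varphi_n^*}$ by $\mathbf 1_K$, and renormalize by a constant. However, the step you identify as the main one does not work. You claim that $\varphi_n^*\to\varphi^*$ pointwise on $\inter K$ because near-maximizers of $\langle x,\cdot\rangle-\varphi_n$ stay in a fixed ball. With $\varphi_n(0)=0$, membership in $\Lip_K(\RR^d)$ only gives $-\sigma_K(-y)\le\varphi_n(y)\le\sigma_K(y)$. The lower bound yields $\langle x,y\rangle-\varphi_n(y)\le\langle x,y\rangle+\sigma_K(-y)$, and for $x\in\inter K$ the right-hand side is $\ge\mathrm{dist}(x,\partial K)\|y\|$. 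So it grows rather than decays, and your coercivity estimate points the wrong way. Confining maximizers would require $\varphi_n$ to grow faster than $\langle x,\cdot\rangle$ in every direction, i.e., $x$ uniformly inside $\mathrm{dom}\,\varphi_n^*$, and nothing controls that.

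The claim is in fact false. Take $d=1$, $K=[-1,1]$, $\varphi_n(y)=\max\{y/2,\,y/n\}$. These functions are convex, lie in $\Lip_K(\RR)$, vanish at $0$, and converge locally uniformly to $\varphi(y)=\max\{y/2,0\}$. But $\varphi_n^*=\cvxind{[1/n,1/2]}$ and $\varphi^*=\cvxind{[0,1/2]}$, so $\varphi_n^*(0)=+\infty$ for all $n$ while $\varphi^*(0)=0$, with $0\in\inter K$. Convergence does hold off the null set $\partial(\mathrm{dom}\,\varphi^*)$, but proving that needs epi-convergence/Wijsman-type arguments that you have not supplied.

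The same confusion appears earlier. Your upper bound $\varphi^*(x)\le\sup_y(\langle x,y\rangle+\sigma_K(-y))$ is vacuous, because the right-hand side equals $+\infty$ for every $x$ as soon as $K$ is not a point. Also, $\varphi^*$ is not $0$ on $K$: for $a\in K$, $\varphi=\langle a,\cdot\rangle$ has $\varphi^*=\cvxind{\{a\}}$ and $\cI_K(\varphi)=+\infty$. Only the lower bound $\varphi^*\ge 0$ is true.

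The repair is to prove only what you need, namely $\cI_K(\varphi)\le\liminf_n\cI_K(\varphi_n)$, which is exactly what the paper does.

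\begin{itemize}
\item For all $x,y$ you have $\langle x,y\rangle-\varphi_n(y)\le\varphi_n^*(x)$. Letting $n\to\infty$ and then taking the supremum in $y$ gives $\varphi^*(x)\le\liminf_n\varphi_n^*(x)$ for \emph{every} $x$, with no localization needed.
\item Hence $e^{-\varphi^*}\ge\limsup_n e^{-\varphi_n^*}$.
\item Since $0\le e^{-\varphi_n^*}\le\mathbf 1_K$ (by your correct bound $\varphi_n^*\ge0$ and $\varphi_n^*=\infty$ off $K$), the reverse Fatou lemma gives $\int e^{-\varphi^*}\ge\limsup_n\int e^{-\varphi_n^*}$.
\item Together with your treatment of the linear term, this shows that $\varphi$ is a minimizer.
\end{itemize}

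Finally, your renormalization needs $\int e^{-\varphi^*}>0$. This must come from $\cE_\mu(\varphi)\le\cE_\mu(\sigma_K)<\infty$, as in the paper, not from the false upper bound on $\varphi^*$.
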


\begin{proof}
    Let $(\varphi_n)_{n \geq 0}$ be a minimizing sequence for $\cE_\mu$ on $\Lip_K(\RR^d)$. Up to replacing $\varphi_n$ by $(\varphi_n)^{**}$ and observing that $\cE_\mu((\varphi_n)^{**}) \leq \cE_\mu(\varphi_n)$, we can assume that $\varphi_n$ is convex for all $n \geq 0$. Since $\cE_\mu$ is invariant under addition of a constant, we can assume that $\varphi_n(0) = 0$ for all $n \geq 0$. Up to extracting a subsequence, we can assume that $(\varphi_n)_{n \geq 0}$ converges locally uniformly to some convex function $\varphi \in \Lip_K(\RR^d)$ (intuitively, this is because the sub-gradients are uniformly controlled).

    For any $x \in \RR^d$,
    \begin{align*}
        \varphi^*(x)
        &= \sup_{y \in \RR^d} (\<x, y\> - \varphi(y))
        = \sup_{y \in \RR^d} (\<x, y\> - \limsup_{n \to \infty} \varphi_n(y)) \\
        &= \sup_{y \in \RR^d} \sup_{n \geq 0} \inf_{m \geq n} (\<x, y\> - \varphi_m(y))
        \leq \sup_{n \geq 0} \inf_{m \geq n} \sup_{y \in \RR^d} (\<x, y\> - \varphi_m(y)) \\
        &= \liminf_{n \to \infty}\, (\varphi_n)^*(x).
    \end{align*}
    Therefore,
    \begin{equation}
        \label{eq:ephistar1eq}
        e^{-\varphi^*(x)} \geq \limsup_{n \to \infty} e^{-(\varphi_n)^*(x)}.
    \end{equation}

    For any $n \geq 0$, one has $\varphi_n \leq \sigma_K$, where we used the definition of $\Lip_K(\RR^d)$ and the fact that $\varphi_n(0) = 0$. Since the Legendre transform reverses order, we deduce that $(\varphi_n)^* \geq (\sigma_K)^* = \cvxind K$. Thus, $e^{-(\varphi_n)^*(\,\cdot\,)} \leq e^{-\cvxind K(\,\cdot\,)}$, which is integrable. Using Fatou's lemma and \cref{eq:ephistar1eq},
    \begin{align*}
        \cI(\varphi)
        &= -\log \int_{\RR^d} e^{-\varphi^*(x)}\, d x
        \leq -\log \int_{\RR^d} \limsup_{n \to \infty} e^{-(\varphi_n)^*(x)}\, d x \\
        &\leq -\log \limsup_{n \to \infty} \int_{\RR^d} e^{-(\varphi_n)^*(x)}\, d x
        = \liminf_{n \to \infty} \cI(\varphi_n).
    \end{align*}
    On the other hand, again by definition of $\Lip_K(\RR^d)$ and using $\varphi_n(0) = 0$, one has $|\varphi_n| \leq \max\{\sigma_K, \sigma_K(-\,\cdot\,)\}$; the latter is $\mu$-integrable since $K$ is compact, so $\sigma_K$ has linear growth, and $\mu$ has finite first moments. Thus, by the dominated convergence theorem,
    \begin{equation*}
        \lim_{n \to \infty} \int_{\RR^d} \varphi_n(y)\, d \mu(y) = \int_{\RR^d} \varphi(y)\, d \mu(y).
    \end{equation*}
    Thus, $\cE_\mu(\varphi) \leq \lim_{n \to \infty} \cE_\mu(\varphi_n)$, hence $\varphi$ minimizes $\cE_\mu$ on $\Lip_K(\RR^d)$.

    Even though $\varphi$ minimizes $\cE_\mu$ on $\Lip_K(\RR^d)$, it does not necessarily hold that $\int_{\RR^d} e^{-\varphi^*(x)}\, d x = 1$. This can be remedied as follows. We know that $\int_{\RR^d} e^{-\varphi^*(x)\, d x} \leq \int_{\RR^d} e^{-\cvxind{K}(x)} d x=|K| < \infty$, and also that $\int_{\RR^d} e^{-\varphi^*(x)}\, d x > 0$, since otherwise $\cE_\mu(\varphi) = \infty$, which would contradict the inequality $\cE_\mu(\varphi) \leq \cE_\mu(\sigma_K) < \infty$. We conclude the proof by choosing $\varphi_{\mu, K}$ as $\varphi - \log \int_{\RR^d} e^{-\varphi^*(x)}\, d x$. Since $\cE_\mu$ is invariant under addition of a constant, $\varphi_{\mu, K}$ still minimizes $\cE_\mu$ over $\Lip_K(\RR^d)$, and the additive constant $-\log \int_{\RR^d} e^{-\varphi^*(x)}\, d x$ is chosen so that $\int_{\RR^d} e^{-(\varphi_{\mu, K})^*(x)}\, d x = 1$.
\end{proof}

Finally, let us mention some other properties of this modified moment measure problem. We only sketch the proofs, since we will not need the following results in the next sections.

\begin{proposition}
    Let $K \subset \RR^d$ be a compact convex set with nonempty interior, and let $\mu$ be a probability measure on $\RR^d$ with finite first moments. Then any two convex minimizers of $\cE_\mu$ over $\Lip_K(\RR^d)$ coincide up to addition of an affine function.
\end{proposition}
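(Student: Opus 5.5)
Two convex minimizers $\varphi_0,\varphi_1$ of $\cE_\mu$ over $\Lip_K(\RR^d)$ should coincide up to an affine function; the plan is the standard Prékopa--Leindler argument of Cordero-Erausquin--Klartag, adapted to the compact set $K$. By invariance under constants, normalize so that $\int_K e^{-\varphi_i^*}\,dx = 1$ for $i=0,1$. Set $\varphi_t := (1-t)\varphi_0 + t\varphi_1$. This function is convex and lies in $\Lip_K(\RR^d)$, because the defining inequality is preserved under convex combinations. It is also $\mu$-integrable, and $\int \varphi_t\,d\mu$ is affine in $t$.

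The key step is concavity, in $t$, of $-\cI(\varphi_t) = \log\int e^{-\varphi_t^*}$. For any $y_0,y_1$ and $x = (1-t)x_0 + t x_1$,
\begin{equation*}
    \varphi_t^*(x) \le (1-t)\varphi_0^*(x_0) + t\varphi_1^*(x_1),
\end{equation*}
which follows by testing the supremum defining $\varphi_t^*$ against each $y$. Applying Prékopa--Leindler to $h = e^{-\varphi_t^*}$, $f = e^{-\varphi_0^*}$, $g = e^{-\varphi_1^*}$ gives $\int e^{-\varphi_t^*} \ge 1$. Hence
\begin{equation*}
    \cE_\mu(\varphi_t) \le (1-t)\cE_\mu(\varphi_0) + t\cE_\mu(\varphi_1) = \min,
\end{equation*}
so equality holds throughout, and in particular equality holds in Prékopa--Leindler at, say, $t=1/2$.

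Next, invoke the equality case. Dubuc's characterization says that equality for log-concave functions with integral $1$ forces $g(x) = f(x - b)$ for some $b\in\RR^d$, i.e.
\begin{equation*}
    \varphi_1^*(x) = \varphi_0^*(x - b)
\end{equation*}
almost everywhere, up to the normalization constants, which are already fixed. The identity then upgrades to everywhere on $\RR^d$ by lower semicontinuity of Legendre transforms together with the convexity of both sides: they agree on the interiors of their domains, and the closures of the (convex) domains, hence the boundary values, are determined.

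The final step takes Legendre transforms: $\varphi_1(y) = \varphi_0(y) + \<b, y\>$. Here I would use that $\varphi_i$ is convex and finite, hence continuous, so $\varphi_i = \varphi_i^{**}$. This gives the claim, with affine part $\<b,\cdot\> + c$ once the normalization constants are reintroduced.

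I expect the main obstacle to be the equality case of Prékopa--Leindler. It must be quoted in the form valid for log-concave functions, which are merely integrable and possibly have non-full-dimensional or boundary-irregular supports (here the supports lie in $K$). One also has to check the non-degeneracy needed to apply it, namely $0 < \int e^{-\varphi_i^*} < \infty$; this holds as in the proof of \cref{prop:lipschitz_minimizer}.
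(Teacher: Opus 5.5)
Your proof is correct and is essentially the paper's argument. The paper derives the claim from strict convexity of $\cE_\mu$ modulo affine functions, citing Cordero-Erausquin--Klartag's Remark~13, i.e.\ from the equality case of Prékopa--Leindler. It also notes that this can be read off from \cref{prop:convexity_estimate}: with zero deficit, that estimate directly gives $e^{-\varphi_1^*}=e^{-\varphi_0^*(\,\cdot\,-v)}$ in $L^1$, so you could use it in place of quoting Dubuc's characterization.
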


\begin{proof}
    This follows from the strict convexity up to addition of affine functions of the functional $\cE_\mu$ on the convex set of convex functions that are finite in a neighborhood of the origin, see~\cite[Remark~13]{cordero2015}. In particular, this may be understood as a consequence of \cref{prop:convexity_estimate} below.
\end{proof}

\begin{remark}
    This does not imply that any sum of a convex minimizer and an affine function is a minimizer: even when the measure $\mu$ is centered and thus $\cE_\mu$ is invariant under addition of affine functions, one also needs that this sum belongs to the space $\Lip_K(\RR^d)$ of admissible functions.
\end{remark}

\begin{proposition}
    \label{prop:compact_problem_moment_measure}
    Let $K \subset \RR^d$ be a compact convex set with nonempty interior, and let $\mu$ be a probability measure on $\RR^d$ with finite first moments. Let $\varphi \in \Lip_K(\RR^d)$ be a minimizer of $\cE_\mu$ over $\Lip_K(\RR^d)$ that satisfies $\int_{\RR^d} e^{-\varphi^*(x)}\, d x = 1$. Then $\mm(\varphi^*) = \mu$.
\end{proposition}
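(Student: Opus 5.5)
Write $\psi := \varphi^*$. Since $\varphi \in \Lip_K(\RR^d)$ is convex and finite, $\psi$ is lower semicontinuous, convex, and satisfies $\psi = \infty$ outside $K$. Hence $\nabla\psi(x)$ exists for a.e.\ $x$ in the support of $e^{-\psi}$. The plan is the standard first-variation argument. We perturb $\varphi$ by a bounded continuous test function $h$ and differentiate $\cE_{\mu,K}$ at $\varphi$.

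Since $\varphi$ minimizes $\cE_\mu$ over $\Lip_K(\RR^d)$, \cref{prop:lipschitz_minimization_equivalence} shows it minimizes $\cE_{\mu,K}$ over all $\mu$-integrable functions. In particular we may use $\varphi + t h$ for $h \in C_b(\RR^d)$ and $t \in \RR$, with no need to stay inside $\Lip_K$. This is the point of the $K$-MMP: the perturbation is unconstrained. We then compute
\begin{equation*}
    \frac{d}{dt}\Big|_{t=0} \cE_{\mu,K}(\varphi + t h) = \int h\, d\mu - \frac{\int_K \frac{d}{dt}\big|_{t=0}(\varphi+th)^*(x)\, e^{-\psi(x)}\, dx}{\int_K e^{-\psi}\, dx}.
\end{equation*}
The normalization makes the denominator equal to $1$.

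The key claim is that, for every $x$ in the interior of $\{\psi<\infty\}\cap K$ at which $\psi$ is differentiable, $\frac{d}{dt}|_{t=0}(\varphi+th)^*(x) = -h(\nabla\psi(x))$. Indeed, the supremum defining $\varphi^*(x)$ is attained exactly at $y = \nabla\psi(x)$. There are two routes to the envelope theorem. The first is direct: $(\varphi+th)^*(x) \ge \<x,y_x\> - \varphi(y_x) - t h(y_x)$ with $y_x = \nabla\psi(x)$ gives one inequality. For the other, note that maximizers $y_t$ of $\<x,y\> - \varphi(y) - th(y)$ stay bounded and converge to $y_x$ as $t\to0$. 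This uses uniqueness of the maximizer and the coercivity of $\<x,\cdot\>-\varphi$ for $x$ interior to $\{\psi<\infty\}$. Continuity of $h$ then finishes. The bound $|(\varphi+th)^* - \varphi^*| \le |t|\,\|h\|_\infty$ supplies domination, so the derivative passes under the integral. The set of points that are boundary points or non-differentiability points of $\psi$ is Lebesgue-null, since the boundary of a convex set is null.

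Setting the derivative to zero for $t$ of both signs gives $\int h\, d\mu = \int h(\nabla\psi(x)) e^{-\psi(x)}\,dx$ for all $h \in C_b$. That is, $\mm(\psi) = \mu$, because both are finite measures: $\mu$ is a probability measure, and $\mm(\psi)$ has mass $1$ by the normalization. One caveat is that $\mu$-integrability of $\varphi+th$ is immediate, since $h$ is bounded and $\varphi$ has linear growth against a first moment.

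The main obstacle is the envelope step, specifically the convergence $y_t \to y_x$. Coercivity is what I would verify first. If $x$ lies in the interior of $\operatorname{dom}\psi$, then $\<x,y\>-\varphi(y) \le \psi(x) - \delta|y| + C$ for some $\delta>0$ and constant $C$, because $\varphi \ge \<x',y\> - \psi(x')$ for all $x'$ in a ball around $x$. This gives a uniform bound on $|y_t|$. Then any limit point of $y_t$ is a maximizer at $t=0$, and that maximizer is unique precisely where $\psi$ is differentiable.
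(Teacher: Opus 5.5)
Your proof is correct, but it takes a different route from the paper. Like you, the paper first reduces to $\cE_{\mu,K}$-minimality over all $\mu$-integrable functions via \cref{prop:lipschitz_minimization_equivalence}. It never differentiates $\cE_{\mu,K}$, however. Instead it applies Jensen's inequality, i.e.\ the concavity of $f \mapsto -\log\int_K e^{-f}\,dx$, to bound $\cI_K(\varphi_1)$ from above by its linearization at $\varphi$. This shows that $\varphi$ minimizes the Kantorovich dual functional $\varphi_1 \mapsto \int \varphi_1\,d\mu + \int \varphi_1^*(x)\,e^{-\varphi^*(x)}\,dx$ for quadratic-cost transport between $\mu$ and $e^{-\varphi^*(x)}\,dx$. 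The conclusion $(\nabla\varphi^*)_\#(e^{-\varphi^*(x)}\,dx) = \mu$ is then quoted from optimal transport theory.

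You instead compute the first variation of the nonlinear functional along $\varphi + th$, $h \in C_b(\RR^d)$, and prove the envelope identity $\tfrac{d}{dt}\big|_{t=0}(\varphi+th)^*(x) = -h(\nabla\psi(x))$ by hand. This is essentially the argument inside the transport black box: the standard variational proof that a minimizer of the dual functional yields the Brenier map. The paper's route is shorter and exhibits the structural fact that a solution is the Brenier potential between its own Gibbs measure $e^{-\psi(x)}\,dx$ and $\mu$. Yours is self-contained and uses only bounded continuous perturbations plus the finite first moment of $\mu$. It also spares you checking which version of the transport result applies when $\mu$ has only finite first moments.

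There are two minor slips. First, the derivative of $-\log\int_K e^{-(\varphi+th)^*(x)}\,dx$ at $t=0$ is $+\int_K \tfrac{d}{dt}\big|_{t=0}(\varphi+th)^*(x)\,e^{-\psi(x)}\,dx$ divided by $\int_K e^{-\psi(x)}\,dx$. Your displayed formula therefore has the wrong sign in front of the second term. Combined with the envelope identity, the correct expression is $\int h\,d\mu - \int h(\nabla\psi(x))\,e^{-\psi(x)}\,dx$, which is what you actually use at the end.

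Second, the statement does not assume $\varphi$ convex. You can replace $\varphi$ by $\varphi^{**}$, which still lies in $\Lip_K(\RR^d)$, is still a minimizer, and has the same Legendre transform; this is what the paper does. Alternatively, note that your argument only needs continuity of $\varphi$. Any maximizer $\bar y$ of $\<x,\cdot\> - \varphi$ satisfies $\psi(x') \geq \psi(x) + \<x' - x, \bar y\>$ for all $x'$, hence $\bar y \in \partial\psi(x) = \{\nabla\psi(x)\}$ at differentiability points. So uniqueness of the maximizer and the rest of your envelope step go through unchanged.
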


\begin{proof}
    We follow the proof of the existence part of~\cite[Theorem~2]{cordero2015}, and refer to it for more details. Up to replacing $\varphi$ by $\varphi^{**}$, we may assume that $\varphi$ is convex. By \cref{prop:lipschitz_minimization_equivalence}, $\varphi$ also minimizes $\cE_{\mu, K}$ over all $\mu$-integrable functions with values in $\RR \cup \{\infty\}$, and, since $\varphi \in \Lip_K(\RR^d)$, one has $\varphi^* = \infty$ outside $K$. Thus, by assumption,
    \begin{equation*}
        \int_{K} e^{-\varphi^*(x)}\, d x
        =\int_{\RR^d} e^{-\varphi^*(x)}\, d x = 1.
    \end{equation*}
    Then, for any $\mu$-integrable function $\varphi_1 \colon \RR^d \to \RR \cup \{\infty\}$,
    \begin{align*}
        \int_{\RR^d} \varphi(y)\, d \mu(y)
        &= \int_{\RR^d} \varphi(y)\, d \mu(y) - \log \int_K e^{-\varphi^*(x)}\, d x
        \\
        &=\cE_K(\varphi)\le
        \cE_K(\varphi_1)
        \leq \int_{\RR^d} \varphi_1(y)\, d \mu(y) - \log \int_K e^{-\varphi^*_1(x)}\, d x.
    \end{align*}
    Using Jensen's inequality (applicable
    as $e^{-\varphi^*}$ is a probability density),
    \begin{equation*}
        -\log \int_K e^{-\varphi^*_1(x)}\, d x
        = -\log \int_K e^{\varphi^*(x) - \psi_1(x)} e^{-\varphi^*(x)}\, d x
        \leq \int_K (\varphi^*_1(x) - \varphi^*(x)) e^{-\varphi^*(x)}\, d x.
    \end{equation*}
    The last integral can also be taken on $\RR^d$, and then
    \begin{equation*}
        \int_{\RR^d} \varphi(y)\, d \mu(y) + \int_{\RR^d} \varphi^*(x) e^{-\varphi^*(x)}\, d x \leq \int_{\RR^d} \varphi_1(y)\, d \mu(y) + \int_{\RR^d} \varphi^*_1(x) e^{-\varphi^*(x)}\, d x.
    \end{equation*}
    This means that $\varphi$ and $\varphi^*$ are Kantorovich potentials associated to the optimal transport problem between $\mu$ and $e^{-\varphi^*(x)}\, d x$, and thus that $(\nabla \varphi^*)_\# (e^{-\varphi^*(x)}\, d x) = \mu$.
\end{proof}

\begin{remark}
    While $\varphi^*$ has moment measure $\mu$ in the above proposition, it is \emph{not} necessarily essentially continuous. Therefore, \cref{thm:existence_uniqueness} does not imply that it coincides with $\psi_\mu$ up to addition of a linear function. Adapting the arguments from~\cite{cordero2015}, one could probably characterize the set of convex functions with moment measure $\mu$ whose Legendre transforms minimize $\cE_{\mu, K}$, but this is not needed in this article.
\end{remark}

\section{Stability estimate for the MMP}
\label{sec:stability_estimate}

This section is devoted to the proof of \cref{thm:stability}. In \cref{subsec:convexity_estimate}, we state a convexity estimate for the functional $\cI$, as a rather direct consequence of the quantitative stability of the Prékopa--Leindler inequality. In \cref{subsec:compact_stability_estimate}, we deduce a stability estimate for the variant of the moment measure problem on a compact domain that we described in \cref{subsec:compact_problem}. In \cref{subsec:error_estimate_intermediary_problem}, we study estimate the error introduced when approximating the original problem by this variant. We conclude the proof of \cref{thm:stability} in \cref{subsec:proof_main_estimate}.

\subsection{Convexity estimate}
\label{subsec:convexity_estimate}

We rely on the following result of Figalli--van Hintum--Tiba~\cite{figalli2025} on the quantitative stability of the Prékopa--Leindler inequality.

\begin{theorem}
    \label{thm:quantitative_prekopa_leindler}
    Let $g_1$, $g_2 \colon \RR^d \to \RR_+$ be integrable functions with positive integrals. Let $h \colon \RR^d \to \RR_+$ be a measurable function such that for all $x_1$, $x_2 \in \RR^d$, one has $h(\frac{x_1 + x_2}{2}) \geq g_1(x_1)^{1/2} g_2(x_2)^{1/2}$. Then there exists $v \in \RR^d$ such that
    \begin{align*}
        &\left\|\frac{g_1}{\int_{\RR^d} g_1(x)\, d x} - \frac{g_2(\,\cdot\, - v)}{\int_{\RR^d} g_2(x)\, d x}\right\|_{L^1(\RR^d)} \\
        &\lesssim_d \left(\frac{\int_{\RR^d} h(x)\, d x}{\left(\int_{\RR^d} g_1(x)\, d x\right)^{1/2} \left(\int_{\RR^d} g_2(x)\, d x\right)^{1/2}} - 1\right)^{1/2}.
    \end{align*}
\end{theorem}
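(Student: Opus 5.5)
This result is quoted from Figalli--van Hintum--Tiba~\cite{figalli2025}, so the plan is to reduce it to their sharp quantitative Prékopa--Leindler theorem by normalizing, and then to treat the regimes of small and large deficit separately.

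\textbf{Normalization.} Set $a := \int_{\RR^d} g_1$ and $b := \int_{\RR^d} g_2$, both positive by assumption. Define
\begin{equation*}
    \tilde g_1 := g_1/a, \qquad \tilde g_2 := g_2/b, \qquad \tilde h := h/\sqrt{ab}.
\end{equation*}
The hypothesis is homogeneous of the right degree, so it is preserved:
\begin{equation*}
    \tilde h\big(\tfrac{x_1+x_2}{2}\big) \geq \tilde g_1(x_1)^{1/2}\, \tilde g_2(x_2)^{1/2}.
\end{equation*}
Now $\int \tilde g_1 = \int \tilde g_2 = 1$, and the right-hand side of the claimed estimate is $\delta^{1/2}$ with
\begin{equation*}
    \delta := \int_{\RR^d} \tilde h - 1 .
\end{equation*}
By the Prékopa--Leindler inequality itself, $\delta \geq 0$, so the square root makes sense. (If $\int h = \infty$ there is nothing to prove.)

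\textbf{Large deficit.} The left-hand side is always at most $\|\tilde g_1\|_{L^1} + \|\tilde g_2\|_{L^1} = 2$. Hence, once $\delta \geq \delta_d$ for a dimensional threshold $\delta_d > 0$, the estimate holds trivially with constant $2\delta_d^{-1/2}$ and any choice of $v$, for instance $v = 0$.

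\textbf{Small deficit.} For $\delta \leq \delta_d$ I would invoke the main theorem of~\cite{figalli2025}, taken with $\lambda = 1/2$. In the normalized setting it provides translations $w_1, w_2 \in \RR^d$ and a log-concave comparison function such that each of $\tilde g_1(\cdot - w_1)$ and $\tilde g_2(\cdot - w_2)$ is $O_d(\delta^{1/2})$-close in $L^1$ to that function, or equivalently to $\tilde h/\int\tilde h$. Replacing $\tilde h$ by $\tilde h/\int \tilde h$ costs at most $\delta$ in $L^1$, and $\delta \leq \delta_d^{1/2}\delta^{1/2}$ in this regime, so the rate $\delta^{1/2}$ is unaffected. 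The triangle inequality then gives
\begin{equation*}
    \|\tilde g_1 - \tilde g_2(\cdot - v)\|_{L^1} \lesssim_d \delta^{1/2}
\end{equation*}
with $v = w_2 - w_1$, up to the sign convention for translations. Finally, undoing the normalization yields exactly the stated inequality.

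\textbf{Main obstacle.} The step needing care is matching the precise form of the Figalli--van Hintum--Tiba statement: whether they compare $g_1$ and $g_2$ directly or each to $h$, which normalization of the integrals they impose, and whether the constant depends only on $d$ once $\lambda = 1/2$ is fixed. The first two points are handled by the rescaling and the triangle-inequality bookkeeping above. The essential input is the sharp exponent $1/2$ with a purely dimensional constant, uniformly over all nonnegative integrable functions with no log-concavity assumption; I would check that their theorem indeed covers this generality.
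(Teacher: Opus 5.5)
Your proposal is correct and follows the paper's proof. You normalize $g_1$, $g_2$, $h$ by $\int g_1$, $\int g_2$ and $(\int g_1)^{1/2}(\int g_2)^{1/2}$, which preserves the midpoint hypothesis, use the classical Prékopa--Leindler inequality to see that the deficit is nonnegative, and invoke Figalli--van Hintum--Tiba; the paper simply applies their Corollary~1.7 directly to the three normalized functions, so your large/small-deficit split and triangle-inequality step are only the extra bookkeeping needed if one starts from their main theorem instead.
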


\begin{proof}
    The classical Prékopa--Leindler inequality states that
    \begin{equation*}
        \frac{\int_{\RR^d} h(x)\, d x}{\left(\int_{\RR^d} g_1(x)\, d x\right)^{1/2} \left(\int_{\RR^d} g_2(x)\, d x\right)^{1/2}} \geq 1.
    \end{equation*}
    Then, applying~\cite[Corollary~1.7]{figalli2025} to the functions
    \begin{align*}
        &\frac{g_1}{\int_{\RR^d} g_1(x)\, d x}, &
        &\frac{g_2}{\int_{\RR^d} g_2(x)\, d x}, &
        &\frac{h}{\left(\int_{\RR^d} g_1(x)\, d x\right)^{1/2} \left(\int_{\RR^d} g_2(x)\, d x\right)^{1/2}}
    \end{align*}
    yields the announced result.
\end{proof}

Following the standard argument for the proof of the convexity of $\cI$, i.e., applying the Prékopa--Leindler inequality to the functions $e^{-(\varphi_1)^*(\,\cdot\,)}$, $e^{-(\varphi_2)^*(\,\cdot\,)}$, and $e^{-((\varphi_1+\varphi_2)/2)^*(\,\cdot\,)}$, but replacing the Prékopa--Leindler inequality by its quantitative version, we deduce the following convexity estimate.

\begin{proposition}
    \label{prop:convexity_estimate}
    Let $\varphi_1$, $\varphi_2 \colon \RR^d \to \RR \cup \{\infty\}$ be two functions satisfying $0 < \int_{\RR^d} e^{-(\varphi_i)^*(x)}\, d x < \infty$, $i \in \{1, 2\}$. There exists $v \in \RR^d$ such that
    \begin{equation*}
        \frac{\cI(\varphi_1) + \cI(\varphi_2)}{2} - \cI\left(\frac{\varphi_1 + \varphi_2}{2}\right) \gtrsim_d \left\|\frac{e^{-(\varphi_1)^*(\,\cdot\,)}}{\int_{\RR^d} e^{-(\varphi_1)^*(x)}\, d x} - \frac{e^{-(\varphi_2)^*(\,\cdot\, - v)}}{\int_{\RR^d} e^{-(\varphi_2)^*(x)}\, d x}\right\|_{L^1(\RR^d)}^2.
    \end{equation*}
\end{proposition}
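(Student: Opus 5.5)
The plan is to apply \cref{thm:quantitative_prekopa_leindler} with
\begin{align*}
    g_1 &:= e^{-(\varphi_1)^*}, & g_2 &:= e^{-(\varphi_2)^*}, & h &:= e^{-\left(\frac{\varphi_1+\varphi_2}{2}\right)^*},
\end{align*}
and then convert the resulting ratio of integrals into the difference of values of $\cI$ appearing in the statement.

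\textbf{Step 1: the pointwise Prékopa--Leindler hypothesis.} For $x_1, x_2 \in \RR^d$ and any $y$,
\begin{equation*}
    \Big\<\tfrac{x_1+x_2}{2}, y\Big\> - \tfrac{\varphi_1(y)+\varphi_2(y)}{2} = \tfrac12\big(\<x_1,y\>-\varphi_1(y)\big) + \tfrac12\big(\<x_2,y\>-\varphi_2(y)\big) \le \tfrac12 (\varphi_1)^*(x_1) + \tfrac12(\varphi_2)^*(x_2).
\end{equation*}
Taking the supremum over $y$ gives $\big(\tfrac{\varphi_1+\varphi_2}{2}\big)^*\big(\tfrac{x_1+x_2}{2}\big) \le \tfrac12(\varphi_1)^*(x_1)+\tfrac12(\varphi_2)^*(x_2)$. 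Exponentiating yields $h\big(\tfrac{x_1+x_2}{2}\big) \ge g_1(x_1)^{1/2} g_2(x_2)^{1/2}$. The conventions $\infty - \infty$ need a brief check: if $\varphi_i(y) = \infty$ the term is $-\infty$ and does not affect the sup. If some $\varphi_i$ takes the value $-\infty$, then $(\varphi_i)^* \equiv \infty$, which the integrability assumption excludes. Measurability of $h$ holds because Legendre transforms are lower semicontinuous. The $g_i$ are integrable with positive integrals by hypothesis.

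\textbf{Step 2: apply the theorem and translate.} Set $A := \int h$ and $B_i := \int g_i$. Then $\cI(\varphi_i) = -\log B_i$ and $\cI\big(\frac{\varphi_1+\varphi_2}{2}\big) = -\log A$, so
\begin{equation*}
    \delta := \frac{\cI(\varphi_1)+\cI(\varphi_2)}{2} - \cI\Big(\frac{\varphi_1+\varphi_2}{2}\Big) = \log\frac{A}{B_1^{1/2}B_2^{1/2}} =: \log t,
\end{equation*}
with $t \ge 1$ by Prékopa--Leindler.

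If $A = \infty$, then $\delta = \infty$ and the estimate is trivial, since the $L^1$ norm of a difference of two probability densities is at most $2$. Otherwise \cref{thm:quantitative_prekopa_leindler} provides $v$ with
\begin{equation*}
    \|\cdot\|_{L^1}^2 \lesssim_d t - 1.
\end{equation*}

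\textbf{Step 3: compare $t-1$ with $\log t$.} This is the only real obstacle, because $t - 1 \le C\log t$ fails for large $t$.
\begin{itemize}
    \item When $t \le e$, concavity of $\log$ gives $\log t \ge (t-1)/(e-1)$, so $\|\cdot\|_{L^1}^2 \lesssim_d \delta$.
    \item When $t > e$, we have $\delta \ge 1$. The $L^1$ norm is again at most $2$, so its square is at most $4 \le 4\delta$, and the estimate holds for any $v$, for instance $v = 0$.
\end{itemize}
Combining the two cases gives the claim with a constant depending only on $d$.
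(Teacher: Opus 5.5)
Your proposal is correct and follows the paper's own proof. The paper also applies the quantitative Prékopa--Leindler theorem to $e^{-(\varphi_1)^*}$, $e^{-(\varphi_2)^*}$, $e^{-((\varphi_1+\varphi_2)/2)^*}$, and then uses the same dichotomy: $e^{\delta}-1\lesssim\delta$ for small $\delta$, and the trivial bound $4$ on the squared $L^1$ norm otherwise. You are a bit more explicit than the paper, with the concrete threshold $t\le e$ obtained from concavity of $\log$ and a separate treatment of the case $\int h=\infty$, both of which the paper leaves implicit.
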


\begin{proof}
    Denote
    \begin{equation*}
        \varphi := \frac{\varphi_0 + \varphi_1}{2}.
    \end{equation*}
    For any $x_1$, $x_2 \in \RR^d$, one has
    \begin{align*}
        \varphi^*\left(\frac{x_1 + x_2}{2}\right)
        &= \sup_{y \in \RR^d} \left(\frac{1}{2} \<x_1, y\> + \frac{1}{2} \<x_2, y\> - \frac{1}{2} \varphi_1(y) - \frac{1}{2} \varphi_2(y)\right) \\
        &\leq \sup_{y_1 \in \RR^d} \left(\frac{1}{2} \<x_1, y_1\> - \frac{1}{2} \varphi_1(y_1)\right) + \sup_{y_2 \in \RR^d} \left(\frac{1}{2} \<x_2, y_2\> - \frac{1}{2} \varphi_2(y_2)\right) \\
        &= \frac{1}{2} (\varphi_1)^*(x_1) + \frac{1}{2} (\varphi_2)^*(x_2),
    \end{align*}
    i.e.,
    \begin{equation*}
        e^{-\varphi^*(\frac{x_1 + x_2}{2})} \geq \left(e^{-(\varphi_1)^*(x_1)}\right)^{1/2} \left(e^{-(\varphi_2)^*(x_2)}\right)^{1/2}.
    \end{equation*}
    Applying \cref{thm:quantitative_prekopa_leindler} to $e^{-(\varphi_0)^*(\,\cdot\,)}$, $e^{-(\varphi_1)^*(\,\cdot\,)}$, and $e^{-\varphi^*(\,\cdot\,)}$, there exists $v \in \RR^d$ such that
    \begin{align*}
        &\left\|\frac{e^{-(\varphi_1)^*(\,\cdot\,)}}{\int_{\RR^d} e^{-(\varphi_1)^*(x)}\, d x} - \frac{e^{-(\varphi_2)^*(\,\cdot\, - v)}}{\int_{\RR^d} e^{-(\varphi_2)^*(x)}\, d x}\right\|_{L^1(\RR^d)}^2 \\
        &\lesssim_d \frac{\int_{\RR^d} e^{-\varphi^*(x)}\, d x}{\left(\int_{\RR^d} e^{-(\varphi_1)^*(x)}\, d x\right)^{1/2} \left(\int_{\RR^d} e^{-(\varphi_2)^*(x)}\, d x\right)^{1/2}} - 1.
    \end{align*}
    Rewrite the right-hand side as
    \begin{equation*}
        \exp \left(\frac{\cI(\varphi_1) + \cI(\varphi_2)}{2} - \cI(\varphi)\right) - 1,
    \end{equation*}
    which is less than $2 \left(\frac{\cI(\varphi_1) + \cI(\varphi_2)}{2} - \cI(\varphi)\right)$ for $\frac{\cI(\varphi_1) + \cI(\varphi_2)}{2} - \cI(\varphi)$ small enough. On the other hand, the announced result also holds for $\frac{\cI(\varphi_1) + \cI(\varphi_2)}{2} - \cI(\varphi)$ large, since one always has
    \begin{equation*}
        \left\|\frac{e^{-(\varphi_1)^*(\,\cdot\,)}}{\int_{\RR^d} e^{-(\varphi_1)^*(x)}\, d x} - \frac{e^{-(\varphi_2)^*(\,\cdot\, - v)}}{\int_{\RR^d} e^{-(\varphi_2)^*(x)}\, d x}\right\|_{L^1(\RR^d)}^2 \leq 4.
    \end{equation*}
    This concludes the proof.
\end{proof}

\subsection{\texorpdfstring{Stability estimate for the $K$-MMP}{Stability estimate for the 𝐾-MMP}}
\label{subsec:compact_stability_estimate}

Given a probability measure $\mu$ on $\RR^d$ with finite first moments and a compact convex set $K \subset \RR^d$ with nonempty interior, \cref{prop:lipschitz_minimizer} furnishes a function
\begin{equation*}
    \varphi_{\mu, K}
\end{equation*}
that minimizes $\cE_\mu$ over $\Lip_K(\RR^d)$ (equivalently, minimizes $\cE_{\mu, K}$ over all $\mu$-integrable functions with values in $\RR \cup \{\infty\}$). Set
\begin{equation}
    \label{eq:phimuK}
    \psi_{\mu, K} := (\varphi_{\mu, K})^*.
\end{equation}
Recall that
\begin{equation*}
    \int_{\RR^d} e^{-\psi_{\mu, K}(x)}\, d x = 1,
\end{equation*}
and that by \cref{prop:compact_problem_moment_measure},
\begin{equation}
    \label{eq:psimuK}
    \mm(\psi_{\mu, K})=\mu.
\end{equation}
This means that $\psi_{\mu, K}$ solve the $K$-MMP for $\mu$. We obtain the following stability estimate for the $K$-MMP under variation of the measure $\mu$.

\begin{proposition}
    \label{prop:lipschitz_lipschitz_estimate}
    Let $\mu$, $\nu$ be two probability measures on $\RR^d$ with finite first moments, and let $K \subset \RR^d$ be a compact convex set with nonempty interior. There exists $v \in \RR^d$ such that
    \begin{equation*}
        \|e^{-\psi_{\mu, K(\,\cdot\,)}} - e^{-\psi_{\nu, K}(\,\cdot\, - v)}\|_{L^1(\RR^d)} \lesssim_d (\diam(K) W_1(\mu, \nu))^{1/2}.
    \end{equation*}
\end{proposition}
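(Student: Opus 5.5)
Set $\varphi_1 := \varphi_{\mu,K}$ and $\varphi_2 := \varphi_{\nu,K}$. Both are convex functions in $\Lip_K(\RR^d)$ normalized so that $\int e^{-\varphi_i^*}=1$, i.e.\ $\cI(\varphi_i)=0$. The plan is the usual ``strong convexity plus minimality'' argument, using the convexity estimate of \cref{prop:convexity_estimate}. Put $\varphi := (\varphi_1+\varphi_2)/2$. Since $\Lip_K(\RR^d)$ is convex, $\varphi \in \Lip_K(\RR^d)$, so $\varphi$ is an admissible competitor for both minimization problems. Applying \cref{prop:convexity_estimate} (its hypotheses hold because $0<\int e^{-\varphi_i^*}=1<\infty$) gives some $v$ with
\begin{equation*}
    \|e^{-\psi_{\mu,K}} - e^{-\psi_{\nu,K}(\,\cdot\,-v)}\|_{L^1}^2 \lesssim_d -\cI(\varphi).
\end{equation*}
It therefore suffices to show $-\cI(\varphi) \le \tfrac12\diam(K)\,W_1(\mu,\nu)$, up to a constant.

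To get this bound, use the minimality of $\varphi_1$ for $\cE_\mu$ and of $\varphi_2$ for $\cE_\nu$ over $\Lip_K(\RR^d)$:
\begin{equation*}
    \int \varphi_1\,d\mu \le \int \varphi\,d\mu + \cI(\varphi), \qquad \int \varphi_2\,d\nu \le \int \varphi\,d\nu + \cI(\varphi).
\end{equation*}
Add the two inequalities and substitute $\varphi = (\varphi_1+\varphi_2)/2$. This gives
\begin{equation*}
    -2\cI(\varphi) \le \tfrac12\int(\varphi_2-\varphi_1)\,d\mu - \tfrac12\int(\varphi_2-\varphi_1)\,d\nu = \tfrac12\int(\varphi_2-\varphi_1)\,d(\mu-\nu).
\end{equation*}
All integrals are finite, because functions in $\Lip_K$ grow at most linearly and $\mu,\nu$ have finite first moments.

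The remaining step is to control the Lipschitz constant of $\varphi_2-\varphi_1$, which is where $\diam(K)$ enters. For $\varphi_i\in\Lip_K$ one has $\varphi_i(y_2)-\varphi_i(y_1)\le\sigma_K(y_2-y_1)$ and $\varphi_i(y_1)-\varphi_i(y_2)\le\sigma_K(y_1-y_2)$. Hence
\begin{align*}
    (\varphi_2-\varphi_1)(y_2)-(\varphi_2-\varphi_1)(y_1) &\le \sigma_K(y_2-y_1)+\sigma_K(y_1-y_2) \\
    &= \sup_{x,x'\in K}\<x-x',y_2-y_1\> \le \diam(K)\,\|y_2-y_1\|.
\end{align*}
So $\varphi_2-\varphi_1$ is $\diam(K)$-Lipschitz. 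Kantorovich--Rubinstein duality \cref{eq:kantorovich_rubinstein} then yields $\int(\varphi_2-\varphi_1)\,d(\mu-\nu)\le \diam(K)W_1(\mu,\nu)$. Combining, $-\cI(\varphi)\le \tfrac14\diam(K)W_1(\mu,\nu)$, and taking square roots proves the estimate.

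The one point needing care is this Lipschitz bound. A naive bound would use the radius of a ball centered at the origin that contains $K$; using the symmetrized support function instead gives exactly $\diam(K)$. Everything else follows directly from results already in the paper.
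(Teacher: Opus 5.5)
Your proof is correct and follows the paper's argument. You apply the convexity estimate of \cref{prop:convexity_estimate} to the midpoint $(\varphi_{\mu,K}+\varphi_{\nu,K})/2\in\Lip_K(\RR^d)$, sum the two minimality inequalities to obtain $\frac14\int(\varphi_{\nu,K}-\varphi_{\mu,K})\,d(\mu-\nu)$, and conclude with Kantorovich--Rubinstein duality; the only differences are cosmetic (you use $\cI(\varphi_{\mu,K})=\cI(\varphi_{\nu,K})=0$ from the start, and you justify the $\diam(K)$-Lipschitz bound via $\sigma_K(z)+\sigma_K(-z)\le\diam(K)\|z\|$, which the paper simply asserts).
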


\begin{proof}
    By \cref{prop:convexity_estimate}, and using that $\int_{\RR^d} e^{-\psi_{\mu, K}(x)}\, d x = \int_{\RR^d} e^{-\psi_{\nu, K}(x)}\, d x = 1$, there exists $v \in \RR^d$ such that
    \begin{equation*}
        \|e^{-\psi_{\mu, K(\,\cdot\,)}} - e^{-\psi_{\nu, K}(\,\cdot\, - v)}\|_{L^1(\RR^d)}^2 \lesssim_d \frac{\cI(\varphi_{\mu, K}) + \cI(\varphi_{\nu, K})}{2} - \cI\left(\frac{\varphi_{\mu, K} + \varphi_{\nu, K}}{2}\right).
    \end{equation*}
    Using that $\varphi_{\mu, K}$ minimizes $\cE_\mu$ on $\Lip_K(\RR^d)$ and that the latter
    is a convex space, one has $\cE_\mu(\varphi_{\mu, K}) \leq \cE_\mu(\frac{\varphi_{\mu, K} + \varphi_{\nu, K}}{2})$, i.e.,
    \begin{equation*}
        \int_{\RR^d} \varphi_{\mu, K}(y)\, d \mu(y) + \cI(\varphi_{\mu, K}) \leq \int_{\RR^d} \frac{\varphi_{\mu, K}(y) + \varphi_{\nu, K}(y)}{2}\, d \mu(y) + \cI\left(\frac{\varphi_{\mu, K} + \varphi_{\nu, K}}{2}\right),
    \end{equation*}
    i.e.,
    \begin{equation*}
        \cI(\varphi_{\mu, K}) - \cI\left(\frac{\varphi_{\mu, K} + \varphi_{\nu, K}}{2}\right) \leq \frac{1}{2} \int_{\RR^d} \varphi_{\nu, K}(y) - \varphi_{\mu, K}(y)\, d \mu(y).
    \end{equation*}
    Similarly,
    \begin{equation*}
        \cI(\varphi_{\nu, K}) - \cI\left(\frac{\varphi_{\mu, K} + \varphi_{\nu, K}}{2}\right) \leq \frac{1}{2} \int_{\RR^d} \varphi_{\mu, K}(y) - \varphi_{\nu, K}(y)\, d \nu(y).
    \end{equation*}
    Therefore,
    \begin{equation*}
        \frac{\cI(\varphi_{\mu, K}) + \cI(\varphi_{\nu, K})}{2} - \cI\left(\frac{\varphi_{\mu, K} + \varphi_{\nu, K}}{2}\right) \leq \frac{1}{4} \int_{\RR^d} \varphi_{\nu, K}(y) - \varphi_{\mu, K}(y)\, d (\mu - \nu)(y).
    \end{equation*}
    Since $\varphi_{\mu, K}$ and $\varphi_{\nu, K}$ belong to $\Lip_K(\RR^d)$, the function $\varphi_{\nu, K} - \varphi_{\mu, K}$ is $\diam(K)$-Lipschitz. Using the Kantorovich--Rubinstein duality formula \cref{eq:kantorovich_rubinstein}, we deduce that
    \begin{equation*}
        \frac{\cI(\varphi_{\mu, K}) + \cI(\varphi_{\nu, K})}{2} - \cI\left(\frac{\varphi_{\mu, K} + \varphi_{\nu, K}}{2}\right) \leq \frac{1}{4} \diam(K) W_1(\mu, \nu),
    \end{equation*}
    as claimed.
\end{proof}

\subsection{\texorpdfstring{Error estimate for the $K$-MMP}{Error estimate for the 𝐾-MMP}}
\label{subsec:error_estimate_intermediary_problem}

For a given $\mu$ we thus obtain solutions $\psi_{\mu,K}$ and $\psi_\mu$ of the $K$-MMP and the MMP, respectively. The next result makes precise sense of the intuition that the former should converge to the latter as $K$ increases to $\RR^d$ (this means that $L\to\infty$ below).

\begin{proposition}
    \label{prop:lipschitz_nonlipschitz_estimate}
    Let $\mu$ be a centered probability measure on $\RR^d$, not supported on a hyperplane, and let $K \subset \RR^d$ be a compact convex set with nonempty interior. Let $L > 0$ be such that $B_L \subset K$. There exists $v \in \RR^d$ such that, for all $R \geq \int_{\RR^d} \|y\|\, d \mu(y)$ and $0 < r \leq \inf_{w \in S^{d-1}} \int_{\RR^d} |\<w, y\>|\, d \mu(y)$,
    \begin{equation*}
        \|e^{-\psi_\mu(\,\cdot\,)} - e^{-\psi_{\mu, K}(\,\cdot\, - v)}\|_{L^1(\RR^d)} \lesssim_{d, R/r} \left(1 + (r L)^{(d-1)/2}\right) e^{-r L / (4 \sqrt{d})}
    \end{equation*}
    (recall \cref{eq:psimu} and \cref{eq:psimuK}).
\end{proposition}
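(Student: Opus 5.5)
Normalize $\psi_\mu$ so that $\int e^{-\psi_\mu}=1$, and translate it so that its barycenter (or its minimum point) sits at a convenient place. Set $\varphi_\mu := \psi_\mu^*$. By \cref{prop:energy_minimization}, $\varphi_\mu$ minimizes $\cE_\mu$ over all $\mu$-integrable functions. The plan mirrors \cref{prop:lipschitz_lipschitz_estimate}, using the convexity estimate \cref{prop:convexity_estimate} on the pair $\varphi_\mu$, $\varphi_{\mu,K}$.

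The minimality of $\varphi_{\mu,K}$ on $\Lip_K(\RR^d)$ only lets us compare with competitors in $\Lip_K(\RR^d)$, whereas $\varphi_\mu$ is minimal among all functions. We therefore introduce the truncation $\varphi_K := (\varphi_\mu)_K$, the largest convex function below $\varphi_\mu$ lying in $\Lip_K(\RR^d)$; its Legendre transform is $\psi_\mu + \cvxind K$. The argument then runs in three steps.

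\textbf{Step 1.} Since $(\varphi_\mu+\varphi_{\mu,K})/2$ is a valid competitor for $\cE_\mu$, we get
\begin{equation*}
\frac{\cI(\varphi_\mu)+\cI(\varphi_{\mu,K})}{2}-\cI\Big(\frac{\varphi_\mu+\varphi_{\mu,K}}{2}\Big)\le \frac12\big(\cE_\mu(\varphi_{\mu,K})-\cE_\mu(\varphi_\mu)\big).
\end{equation*}
Indeed, writing $E$ for the left-hand side, minimality of $\varphi_\mu$ gives $\cE_\mu(\text{midpoint})\ge\cE_\mu(\varphi_\mu)$, and the linear part cancels out appropriately.

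\textbf{Step 2.} By minimality of $\varphi_{\mu,K}$, we have $\cE_\mu(\varphi_{\mu,K})\le\cE_\mu(\varphi_K)$. Now $\varphi_K\le\varphi_\mu$, so $\int\varphi_K\,d\mu\le\int\varphi_\mu\,d\mu$, and
\begin{equation*}
\cI(\varphi_K)-\cI(\varphi_\mu)=-\log\Big(1-\int_{\RR^d\setminus K}e^{-\psi_\mu}\Big).
\end{equation*}
Combining these with \cref{prop:convexity_estimate}, the squared $L^1$ distance is at most of order $\int_{\RR^d\setminus B_L}e^{-\psi_\mu}$, provided the tail is at most $1/2$. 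For a large tail the bound is trivial, since the $L^1$ norm is at most $2$. The estimate should then read $\|\cdot\|_{L^1}\lesssim (\text{tail})^{1/2}$, and the exponent $rL/(4\sqrt d)$ with the factor $(rL)^{(d-1)/2}$ is consistent with the square root of a tail bound of the form $(rL)^{d-1}e^{-rL/(2\sqrt d)}$.

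\textbf{Step 3 (the main obstacle).} We need a quantitative exponential tail bound for the density $e^{-\psi_\mu}$: after a suitable translation,
\begin{equation*}
\int_{\|x\|\ge L}e^{-\psi_\mu(x)}\,dx\lesssim_{d,R/r}(1+(rL)^{d-1})e^{-rL/(2\sqrt d)}.
\end{equation*}
I would prove this by showing that $\psi_\mu$ grows at least linearly with slope of order $r$, i.e. $\psi_\mu(x)\ge \psi_\mu(x_0)+c\,r\|x-x_0\|$ with $c\approx 1/(2\sqrt d)$, with the $\sqrt d$ coming from passing between coordinate directions and the Euclidean norm.

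The slope bound comes from the moment measure. Pushing $e^{-\psi_\mu}$ by $\nabla\psi_\mu$ gives $\mu$, so for each unit $w$ the quantity $\int|\langle w,\nabla\psi_\mu\rangle|e^{-\psi_\mu}\ge r$. Combined with convexity, this forces $\psi_\mu$ along each ray to increase at rate comparable to $r$ beyond a region whose size is controlled by $R/r$. A concrete route is to use that $\overline{\nabla\psi_\mu(\RR^d)}$ contains the convex hull of $\supp\mu$, which contains a ball of radius $\gtrsim r$ around $0$ (here the centering and the inequality $\int|\langle w,y\rangle|d\mu\ge r$ enter). That ball gives $\psi_\mu(x)\ge\psi_\mu(x_{\min})+c\,r\|x-x_{\min}\|$, after normalizing the minimum point to the origin via the translation $v$.

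The translation $v$ that centers the minimizer must be reconciled with the one produced by \cref{prop:convexity_estimate}; the two simply compose. The remaining ingredient is a lower bound on $\psi_\mu(x_{\min})$ in terms of $d$ and $R/r$, which follows from the normalization $\int e^{-\psi_\mu}=1$ together with the same linear growth and the upper bound on the moment $R$. Integrating $e^{-cr\|x\|}$ in polar coordinates over $\|x\|\ge L$ then gives the factor $(rL)^{d-1}e^{-crL}$. Taking square roots yields the announced bound.
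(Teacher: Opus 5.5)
Your Steps~1 and~2 are the paper's reduction: its competitor $\sup_{x\in K}(\<x,y\>-\psi_\mu(x-v'))$ is the truncation of a translate of $\varphi_\mu$, just like your $\varphi_K$. So everything hinges on Step~3, and there the argument fails.

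The inequality $\psi_\mu(x)\ge\psi_\mu(x_{\min})+c\,r\|x-x_{\min}\|$ is false. It would force $\bar B_{cr}\subset\partial\psi_\mu(x_{\min})$. Yet in test case~1 of \cref{sec:numerical_experiments}, $\psi_\mu(x)=2\log(2\cosh(x_1/2))+2\log(2\cosh(x_2/2))$ is smooth and only quadratic at its minimum.

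More fundamentally, grant $\bar B_{r/2}\subset\overline{\nabla\psi_\mu(\RR^d)}$. This is a purely qualitative statement about $\operatorname{dom}\varphi_\mu$. It gives $\psi_\mu(x)\ge\rho\|x\|-\max_{\bar B_\rho}\varphi_\mu$ for $\rho<r/2$, and nothing in your argument controls $\max_{\bar B_\rho}\varphi_\mu$, even modulo affine functions. That additive constant is the entire content of the estimate. The prefactor depending only on $d$ and $R/r$ comes precisely from showing it is at most $C_d+d\log R$.

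Your suggested source for the constant also points the wrong way. The normalization $\int e^{-\psi_\mu}=1$ combined with a lower bound on $\psi_\mu$ gives an \emph{upper} bound on $\min\psi_\mu$. A lower bound does hold by Jensen: $\min\psi_\mu=-\varphi_\mu(0)\ge-\int\varphi_\mu\,d\mu=-\cE_\mu(\varphi_\mu)\ge-\cE_\mu(\|\cdot\|/R)\ge-(1-\log|B_1|+d\log R)$. But a bound at one point says nothing about where the linear regime begins.

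The missing idea is a quantitative upper bound for $\varphi_\mu$, modulo an affine function, on a whole ball of radius comparable to $r$. It is obtained by applying Jensen to pieces of $\mu$ and comparing with the competitor $\|\cdot\|/R$.

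The paper (\cref{prop:growth_estimate}) proceeds as follows. A Borsuk--Ulam lemma splits $\mu=\mu_{i,1}+\mu_{i,2}$ so that the barycenters lie on the $i$-th axis at distance at least $r/2$ from $0$. It then chooses $v$ so that $\varphi_\mu+\<v,\cdot\>$ takes equal values at each pair. Jensen shows these $2d$ values are at most $\int\varphi_\mu\,d\mu\le C_d+d\log R$. Their convex hull contains $B_{r/(2\sqrt d)}$, and Legendre duality gives $\psi_\mu(x-v)\ge\frac{r}{2\sqrt d}\|x\|-C_d-d\log R$.

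With your normalization $x_{\min}=0$ there is also a shortcut. Here $g:=\varphi_\mu-\varphi_\mu(0)\ge0$. For $w\in S^{d-1}$, let $m_w$ and $b_w$ be the mass and barycenter of $\mu$ restricted to $\{\<w,y\>>0\}$. Jensen on this restriction and convexity of $g$ on $[0,b_w]$ give $g(m_wb_w)\le\int g\,d\mu$. By centering, $\<w,m_wb_w\>=\int\<w,y\>_+\,d\mu(y)\ge r/2$. So the closed convex sublevel set $\{g\le\int g\,d\mu\}$ has support function at least $r/2$ in every direction, hence contains $\bar B_{r/2}$. This yields $\psi_\mu(x)\ge\frac r2\|x\|-\int\varphi_\mu\,d\mu\ge\frac r2\|x\|-C_d-d\log R$.

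Some argument of this kind is what your Step~3 needs and does not supply.
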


The proof of \cref{prop:lipschitz_nonlipschitz_estimate} relies on \cref{prop:growth_estimate} that is proved in \cref{subsec:growth}. It will be convenient to use the notation
\begin{equation}
    \label{eq:phimu}
    \varphi_\mu:=\psi_\mu^*,
\end{equation}
and we continue to use the notation \cref{eq:phimuK}.

\begin{proof}[Proof of \cref{prop:lipschitz_nonlipschitz_estimate}]
    Using \cref{prop:convexity_estimate} and the fact that $\cE_\mu$ is the sum of $\cI$ and a linear term, there exists $v \in \RR^d$ such that (recall \cref{eq:phimu})
    \begin{equation*}
        \|e^{-\psi_\mu} - e^{-\psi_{\mu, K}(\,\cdot\, - v)}\|_{L^1(\RR^d)}^2 \lesssim_d \frac{\cE_\mu(\varphi_\mu) + \cE_\mu(\varphi_{\mu, K})}{2} - \cE_\mu\left(\frac{\varphi_\mu + \varphi_{\mu, K}}{2}\right).
    \end{equation*}
    Since $\varphi_\mu$ is a minimizer, $\cE_\mu(\varphi_\mu) \leq \cE_\mu(\frac{\varphi_\mu + \varphi_{\mu, K}}{2})$,
    \begin{equation*}
        \|e^{-\psi_\mu} - e^{-\psi_{\mu, K}(\,\cdot\, - v)}\|_{L^1(\RR^d)}^2 \lesssim_d \cE_\mu(\varphi_{\mu, K}) - \cE_\mu(\varphi_\mu).
    \end{equation*}
    Since $\varphi_{\mu, K}$ minimizes $\cE_\mu$ over $\Lip_K(\RR^d)$ (recall \cref{prop:lipschitz_minimizer}),
    \begin{equation*}
        \|e^{-\psi_\mu} - e^{-\psi_{\mu, K}(\,\cdot\, - v)}\|_{L^1(\RR^d)}^2 \lesssim_d \cE_\mu(\varphi) - \cE_\mu(\varphi_\mu),
        \quad \hbox{for all\ }
        \varphi\in\Lip_K(\RR^d).
        \end{equation*}
    We will build a suitable competitor $\varphi \in \Lip_K(\RR^d)$.

    By \cref{prop:growth_estimate}, there exists $v' \in \RR^d$ and a constant $C_d \in \RR$ depending only on $d$ such that $\psi_\mu(-v') < \infty$ and
    \begin{equation}
        \label{eq:lipchitz_nonlipschitz_estimate_proof_growth_estimate}
        \psi_\mu(x - v') \geq \frac{r}{2 \sqrt{d}} \|x\| - C_d - d \log R, \quad x \in \RR^d.
    \end{equation}
    We choose
    \begin{equation*}
        \varphi(y) := \sup_{x \in K} (\<x, y\> - \psi_\mu(x - v')).
    \end{equation*}
    It is verified that $\varphi \in \Lip_K(\RR^d)$, that $\varphi^*(x) = \psi_\mu(x - v')$ for all $x \in K$, and that $\varphi^*(x) = \infty$ for all $x \in \RR^d \setminus K$. For any $y \in \RR^d$, one has $\varphi(y) = \sup_{x \in K} (\<x, y\> - \psi_\mu(x - v')) \leq \sup_{x \in \RR^d} (\<x, y\> - \psi_\mu(x - v')) = \sup_{x \in K} (\<x + v', y\> - \psi_\mu(x)) = \varphi_\mu(y) + \<v', y\>$. Therefore,
    \begin{equation*}
        \int_{\RR^d} \varphi(y)\, d \mu(y)
        \leq \int_{\RR^d} [\varphi_\mu(y) + \<v', y\>]\, d \mu(y)
        = \int_{\RR^d} \varphi_\mu(y)\, d y,
    \end{equation*}
    and thus
    \begin{equation*}
        \cE_\mu(\varphi) - \cE_\mu(\varphi_\mu) \leq \cI(\varphi) - \cI(\varphi_\mu).
    \end{equation*}
    By \cref{eq:totalmassepsiEq}, as $\mu$ is a probability measure,
    $\int_{\RR^d} e^{-\psi_\mu(x)}\, d x = 1$, i.e., $\cI(\varphi_\mu) = 0$, so
    \begin{equation*}
        \cE_\mu(\varphi) - \cE_\mu(\varphi_\mu) \leq \cI(\varphi).
    \end{equation*}
    One has
    \begin{align*}
        \cI(\varphi)
        &= -\log \int_{\RR^d} e^{-\varphi^*(x)}\, d x
        = -\log \int_K e^{-\psi_\mu(x - v')}\, d x \\
        &= -\log \left(1 - \int_{\RR^d \setminus K} e^{-\psi_\mu(x - v')}\, d x\right)
        \leq -\log \left(1 - \int_{\RR^d \setminus B_L} e^{-\psi_\mu(x - v')}\, d x\right).
    \end{align*}
    Chaining the above estimates and using also that $\|e^{-\psi_\mu(\,\cdot\,)} - e^{-\psi_{\mu, K}(\,\cdot\, - v)}\|_{L^1(\RR^d)} \leq \|e^{-\psi_\mu(\,\cdot\,)}\|_{L^1(\RR^d)} + \|e^{-\psi_{\mu, K}(\,\cdot\, - v)}\|_{L^1(\RR^d)} = 2$, we deduce that
    \begin{align*}
        &\|e^{-\psi_\mu(\,\cdot\,)} - e^{-\psi_{\mu, K}(\,\cdot\, - v)}\|_{L^1(\RR^d)} \\
        &\lesssim_d \min \left\{1, \left(-\log \left(1 - \int_{\RR^d \setminus B_L} e^{-\psi_\mu(x - v')}\, d x\right)\right)^{1/2}\right\} \\
        &\lesssim \left(\int_{\RR^d \setminus B_L} e^{-\psi_\mu(x - v')}\, d x\right)^{1/2}.
    \end{align*}
    Using \cref{eq:lipchitz_nonlipschitz_estimate_proof_growth_estimate},
    \begin{equation*}
        \int_{\RR^d \setminus B_L} e^{-\psi_\mu(x - v')}\, d x \lesssim_d \left(\frac{R}{r}\right)^d \left(1 + (rL)^{d-1}\right) e^{-r L / (2 \sqrt{d})},
    \end{equation*}
    which concludes the proof.
\end{proof}

\subsection{A priori
growth estimate à la Wang--Zhu}
\label{subsec:growth}

The proof of \cref{prop:lipschitz_nonlipschitz_estimate} relied on an a priori growth estimate, \cref{prop:growth_estimate}. The statement of \cref{prop:growth_estimate} below is the analogue in the setting of weak solutions of the key a priori estimate in the setting of classical solutions (in the toric Kähler--Einstein literature) that itself essentially goes back to Wang--Zhu, see~\cite[subsection~3.4, Proposition~1]{donaldson2008}. Our argument is completely different from the original geometric approach and generalizes it.

\begin{proposition}
    \label{prop:growth_estimate}
    Let $\mu$ be a centered probability measure on $\RR^d$, not supported on a hyperplane. Let $R \geq \int_{\RR^d} \|y\|\, d \mu(y)$ and $0 < r \leq \inf_{w \in S^{d-1}} \int_{\RR^d} |\<w, y\>|\, d \mu(y)$. There exists $v \in \RR^d$ and a constant $C_d \in \RR$ depending only on $d$ such that $\psi_\mu(-v) < \infty$ and, for all $x \in \RR^d$,
    \begin{equation*}
        \psi_\mu(x - v) \geq \frac{r}{2 \sqrt{d}} \|x\| - C_d - d \log R.
    \end{equation*}
\end{proposition}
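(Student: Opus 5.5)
The plan is to translate so that the minimum of $\psi_\mu$ sits at the origin, and then prove two separate facts about the log-concave probability density $f := e^{-\psi_\mu}$ (which has mass $1$ by \cref{eq:totalmassepsiEq}): (a) a lower bound $\min \psi_\mu \geq -C_d - d\log R$, and (b) linear growth of $\psi_\mu$ away from its minimum at slope $r/(2\sqrt d)$. Both facts come from the identities
\begin{equation*}
    \int_{\RR^d} |\<w, \nabla\psi_\mu\>|\, e^{-\psi_\mu}\, dx = \int_{\RR^d} |\<w,y\>|\, d\mu(y), \qquad \int_{\RR^d} \|\nabla\psi_\mu\|\, e^{-\psi_\mu}\, dx = \int_{\RR^d} \|y\|\, d\mu(y) \leq R,
\end{equation*}
which hold by definition of the pushforward. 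The key point is that $\|\nabla\psi_\mu\| e^{-\psi_\mu} = \|\nabla f\|$ a.e., and that essential continuity (\cref{def:essential_continuity}) guarantees that the distributional gradient of $f$ has no jump part on $\partial\{\psi_\mu<\infty\}$ beyond an $\cH^{d-1}$-null set. So these integrals are exactly total variations: $|Df|(\RR^d) \leq R$ and $|D_w f|(\RR^d) \geq r$ for every $w \in S^{d-1}$.

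Since $e^{-\psi_\mu}$ is integrable, $\psi_\mu$ is coercive and lower semicontinuous, so it attains its minimum at some point $-v$. Then $\psi_\mu(-v)<\infty$, and after translating we may assume the minimum is at $0$, with $M := e^{-\psi_\mu(0)} = \max f$.

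\emph{Step (a): bounding $M$.} Let $K_0 := \{f \geq M/e\}$, a convex set containing $0$. By the standard fact for log-concave densities, $1 = \int f \lesssim_d M |K_0|$, so $|K_0| \gtrsim_d 1/M$. On every line in direction $w$, $f$ is unimodal, so its total variation along that line is twice its maximum on the line. Integrating over the orthogonal hyperplane gives
\begin{equation*}
    R \geq |D_w f|(\RR^d) \geq \frac{2M}{e}\, |P_{w^\perp} K_0|_{d-1}.
\end{equation*}
Applying this for the $d$ coordinate directions and using Loomis--Whitney,
\begin{equation*}
    |K_0|^{d-1} \leq \prod_{i=1}^d |P_{e_i^\perp} K_0| \lesssim (R/M)^d,
\end{equation*}
and combining with $|K_0| \gtrsim_d 1/M$ yields $M \lesssim_d R^d$. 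Hence $\psi_\mu(0) \geq -C_d - d\log R$.

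\emph{Step (b): linear growth.} By convexity, for every $x$,
\begin{equation*}
    \psi_\mu(x) \geq \psi_\mu(0) + \|x\|_{S} - 1, \qquad S := \{\psi_\mu \leq \psi_\mu(0) + 1\},
\end{equation*}
where $\|\cdot\|_S$ is the gauge of $S$. It therefore suffices to show $S \subset B_{2\sqrt d/r}$; the additive constant is then absorbed into $C_d$. Suppose $S$ contains a point $\ell w$ with $\ell$ large. Then $f \geq M/e$ on $S$, and the lines in direction $w$ meeting a fixed neighbourhood of the segment $[0,\ell w]$ carry $f \geq M/e^2$ along long segments. On each such line, unimodality gives: maximum of $f$ $\lesssim$ (integral of $f$ on the line)/(length of the superlevel segment). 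Hence
\begin{equation*}
    r \leq |D_w f|(\RR^d) = 2\int_{w^\perp} \max_{\text{line}} f \lesssim \frac{1}{\ell},
\end{equation*}
which gives $\ell \lesssim 1/r$. Centeredness of $\mu$ gives $\int \<w,\nabla\psi_\mu\> f = 0$, so the positive and negative parts of $\partial_w f$ each carry mass at least $r/2$; this symmetric splitting should recover the factor $2$ in $r/2$, while the $\sqrt d$ would come from reducing an arbitrary direction to coordinate-type projections.

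I expect step (b) to be the main obstacle. The clean one-dimensional inequality only controls lines that actually cross the long part of $S$. Lines that meet the convex hull of $K_0$ and $\ell w$ near the apex have short chords, and their contribution must be bounded separately, for instance by a cone/Brunn-type estimate on chord lengths. Getting exactly the constant $r/(2\sqrt d)$, rather than some $c_d r$, may require a different route. The fallback I would try is a one-dimensional argument on the marginal of $f$ in direction $w$, which is log-concave: its $L^1$ derivative norm is at least $r$, and centeredness places its mode at distance $\lesssim 1/r$ from the projection of the minimum point. The remaining difficulty in that route is converting a bound on the marginal back into a pointwise bound on $\psi_\mu$ along the ray.
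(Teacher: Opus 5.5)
Your Step (a) is sound and differs from the paper's route. It combines the total variation identity $|D_wf|(\RR^d)=\int|\langle w,y\rangle|\,d\mu\le R$, which uses essential continuity to rule out a jump part, with Loomis--Whitney. On the dual side the same bound is a one-liner: $\min\psi_\mu\ge-\varphi_\mu(0)\ge-\int\varphi_\mu\,d\mu=-\cE_\mu(\varphi_\mu)\ge-\cE_\mu(\|\cdot\|/R)$, by Jensen and centeredness.

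\textbf{The gap is in Step (b).} As written, the bound $2\int_{w^\perp}\max_{\text{line}}f\lesssim 1/\ell$ is justified only for lines passing close to $[0,\ell w]$, while the integral runs over all of $w^\perp$.

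It can be repaired along the lines you hint at. By the layer-cake formula, $\int_{w^\perp}\max_{\text{line}}f=\int_0^M|P_{w^\perp}\{f\ge s\}|\,ds$. Concavity of chord lengths gives $|P_{w^\perp}A|\le\frac{d}{\ell}|A|$ for every convex $A\supset[0,\ell w]$. Apply this to $\{f\ge s\}$ for $s\le M/e$, and to $S$ for $s>M/e$ using $M|S|\le e$; this yields $r\le 2ed/\ell$.

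However, this only gives $\psi_\mu(x-v)\ge\frac{r}{2ed}\|x\|-C_d-d\log R$. That is a slope of order $r/d$, not the stated $r/(2\sqrt d)$. The factor $1/d$ is already sharp in the chord inequality (cones), and your remarks about recovering the $2$ and the $\sqrt d$ are not arguments. A slope $c_d r$ would suffice downstream, where it only changes numerical constants in \cref{prop:lipschitz_nonlipschitz_estimate} and \cref{thm:stability}. It does not prove the proposition as stated.

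\textbf{The missing idea for the exact constant is to argue on the Legendre-dual side.}
\begin{itemize}
\item The paper applies \cref{lemma:separating_hyperplane} (a Borsuk--Ulam splitting) once per coordinate axis. This writes $\mu=\mu_{i,1}+\mu_{i,2}$, with the two pieces on opposite sides of a hyperplane $\{\langle w_i,\cdot\rangle=0\}$ and barycenters $y_{i,1},y_{i,2}\in\RR e_i$.
\item Separation and centeredness give $\mu_{i,j}(\RR^d)|\langle w_i,y_{i,j}\rangle|=\frac{1}{2}\int|\langle w_i,y\rangle|\,d\mu\ge r/2$. Hence $\|y_{i,j}\|\ge r/2$, with $y_{i,1}$ and $y_{i,2}$ on opposite sides of $0$.
\item Choose $v$ so that $\varphi_\mu+\langle v,\cdot\rangle$ takes equal values at $y_{i,1}$ and $y_{i,2}$ for each $i$. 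Jensen on each piece and minimality of $\varphi_\mu$ then bound these values by $\cE_\mu(\varphi_\mu)\le\cE_\mu(\|\cdot\|/R)\le C_d+d\log R$.
\item Convexity extends this bound to the convex hull of the $2d$ points. That hull contains the cross-polytope with vertices $\pm\frac{r}{2}e_i$, hence $B_{r/(2\sqrt d)}$.
\item Taking Legendre transforms gives exactly the slope $r/(2\sqrt d)$. The $2$ comes from the balanced split, and the $\sqrt d$ from the inradius of the cross-polytope.
\end{itemize}
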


The proof makes use a measure-splitting trick, proved in \cref{lemma:separating_hyperplane} below.

\begin{proof}[Proof of \cref{prop:growth_estimate}]
    Let $(e_i)_{1 \leq i \leq d}$ denote the canonical basis of $\RR^d$. Let
    \begin{equation}
        \label{eq:wieq}
        w_i\in S^{d-1}, \quad \mu_{i, 1}+\mu_{i, 2}=\mu, \quad i\in\{1,\ldots,d\},
    \end{equation}
    be obtained by applying \cref{lemma:separating_hyperplane} $d$ times (with $e = e_1,\ldots,e_d$). Denoting
    $y_{i, 1}$ and $y_{i, 2}$ the barycenters of $\mu_{i, 1}$ and $\mu_{i, 2}$, we have
    \begin{equation}
        \label{eq:yi12eq}
        y_{i,1}, y_{i,2} \in \RR e_i, \quad i \in \{1,\ldots,d\}.
    \end{equation}
    It will be key to show that these barycenters are \emph{a priori} bounded \emph{away} from the origin (this is not a consequence of \cref{lemma:separating_hyperplane}).
    We will obtain that towards the end of this proof.

    First, note there exists $v \in \RR^d$ such that (recall \cref{eq:phimu}),
    \begin{equation}
        \label{eq:growth_estimate_proof_equalities}
        \varphi_\mu(y_{i, 1}) + \<v, y_{i, 1}\> = \varphi_\mu(y_{i, 2}) + \<v, y_{i, 2}\>,
        \quad i\in\{1,\ldots,d\}.
    \end{equation}
    Indeed, this amounts to solving the matrix equation
    \begin{equation*}
        \begin{pmatrix}
        y_{i, 2}-y_{i, 1}
        \end{pmatrix}_{1 \leq i \leq d}v
        =
        (\varphi_\mu(y_{i, 1})-\varphi_\mu(y_{i, 2}))_{1 \leq i \leq d},
    \end{equation*}
    where, by \cref{lemma:separating_hyperplane}, each row of the matrix on the left is a multiple of $e_i$. This equation is solvable since the $i$-th row is zero iff the $i$-th column on the right is zero.

    Using that $\mu_{i, 1}(\RR^d) + \mu_{i, 2}(\RR^d) = \mu(\RR^d)=1$, and \cref{eq:growth_estimate_proof_equalities}, one has, for $i \in \{1, \ldots, d\}$ and $j \in \{1, 2\}$,
    \begin{equation*}
        \varphi_\mu(y_{i, j}) + \<v, y_{i, j}\>
        = \mu_{i, 1}(\RR^d) (\varphi_\mu(y_{i, 1}) + \<v, y_{i, 1}\>) + \mu_{i, 2}(\RR^d) (\varphi_\mu(y_{i, 2}) + \<v, y_{i, 2}\>).
    \end{equation*}
    Next, convexity of $\varphi_\mu$ and Jensen's inequality give
    \begin{equation*}
        \varphi_\mu(y_{i, j})=\varphi_\mu\left(\frac{1}{\mu_{i, j}(\RR^d)} \int_{\RR^d}y\, d \mu_{i, j}(y)\right) \leq \frac{1}{\mu_{i, j}(\RR^d)} \int_{\RR^d} \varphi_\mu(y)\, d \mu_{i, j}(y).
    \end{equation*}
    Combined with $\mu_{i, 1} + \mu_{i, 2} = \mu$ and then that $\mu$ is centered, this implies
    \begin{equation}
        \label{eq:phimuijineq}
        \varphi_\mu(y_{i, j}) + \<v, y_{i, j}\>
        \leq \int_{\RR^d} \varphi_\mu(y) + \<v, y\>\, d \mu(y)
        = \int_{\RR^d} \varphi_\mu(y)\, d \mu(y)=\cE_\mu(\varphi_\mu),
    \end{equation}
    where the last equality holds since by \cref{eq:totalmassepsiEq},  $\int_{\RR^d} e^{-\psi_\mu(x)}\, d x = 1$  ($\mu$ is a probability measure), i.e., $\cI(\varphi_\mu) = 0$. Thus, \cref{eq:phimuijineq} and the fact that $\varphi_\mu$ is a minimizer of $\cE_\mu$ (by \cref{prop:energy_minimization}),
    \begin{equation*}
        \varphi_\mu(y_{i, j}) + \<v, y_{i, j}\>
        \leq \cE_\mu(\varphi_\mu)
        \leq \cE_\mu\left(\frac{1}{R} \|\cdot\|\right).
    \end{equation*}
    Using that the Legendre transform of $\frac{1}{R} \|\cdot\|$ is the convex indicator function of $\overline {B_{1/R}}$,
    \begin{equation}
        \label{eq:growth_estimate_proof_upper_bound}
        \varphi_\mu(y_{i, j}) + \<v, y_{i, j}\>
        \leq \frac{1}{R} \int_{\RR^d} \|y\|\, d \mu(y) - \log \Vol(B_{1/R}) \leq C_d + d \log R,
    \end{equation}
    where $C_d = 1 - \log |B_1|$ (remember that $|B_1|$ depends on the dimension $d$).

    For $1 \leq i \leq d$ and $1 \leq j \leq 2$, one has (recall \cref{eq:wieq} and that $\|w_i\|=1$)
    \begin{equation*}
        \|y_{i, j}\|
        \geq |\<w_i, y_{i, j}\>|
        \geq \mu_{i, j}(\RR^d) |\<w_i, y_{i, j}\>|.
    \end{equation*}
    The splitting properties of \cref{lemma:separating_hyperplane} imply
    \begin{align*}
        \mu_{i, 1}(\RR^d) |\<w_i, y_{i, 1}\>|
        + \mu_{i, 2}(\RR^d) |\<w_i, y_{i, 2}\>|
        &= \int_{\RR^d} |\<w_i, y\>|\, d \mu(y)
        \geq r, \\
        \mu_{i, 1}(\RR^d) |\<w_i, y_{i, 1}\>|
        - \mu_{i, 2}(\RR^d) |\<w_i, y_{i, 2}\>|
        &= \int_{\RR^d} \<w_i, y\>\, d \mu(y)
        = 0.
    \end{align*}
    Altogether,
    \begin{equation*}
        \|y_{i, j}\|
        \geq \mu_{i, j}(\RR^d) |\<w_i, y_{i, j}\>|
        \geq \frac{r}{2}.
    \end{equation*}
    By \cref{eq:yi12eq}, the points $y_{i, j}$ belong to the $d$ lines $\spn \{e_i\}$, with $\<e_i, y_{i, 1}\> \geq 0$ and $\<e_i, y_{i, 2}\> \leq 0$. We deduce that the closed ball of radius $r / (2 \sqrt{d})$ belongs to the convex hull of the points $y_{i, j}$. The growth estimate now follows by combining this fact with the estimate \cref{eq:growth_estimate_proof_upper_bound}, and using that $\psi_\mu(\,\cdot\, - v)$ is the Legendre transform of $\varphi_\mu(\,\cdot\,) + \<v, \,\cdot\,\>$.

    It remains to check that $\psi_\mu(-v) < \infty$. Note $\psi_\mu(0-v) =
    \sup_{y \in \RR^d}[\<0-v,y\>-\varphi_\mu(y)]=
    -\inf_{y \in \RR^d} [\varphi_\mu(y) + \<v, y\>]$ which is finite by \cref{eq:growth_estimate_proof_equalities}.
\end{proof}

\begin{remark}
    One is tempted to invoke~\cite[Lemma~2]{klartag2014} that gives an estimate of a convex function $\psi$ in terms of its so-called Minkowski functional $\|\,\cdot\,\|_\psi$; however, it is not clear to us whether that result can easily be used to prove \cref{prop:growth_estimate} without having a Lipschitz bound on $\psi_\mu$ which is of course a bit circular, as we are trying to establish a one-sided sub-differential estimate, in essence.
\end{remark}

\begin{lemma}
    \label{lemma:separating_hyperplane}
    Let $\mu$ be a centered probability measure on $\RR^d$, and let $e \in S^{d-1}$. There exists $w \in S^{d-1}$ and two measures $\mu_1$, $\mu_2$ on $\RR^d$ with positive mass such that $\mu = \mu_1 + \mu_2$, $\<w, y\> \geq 0$ for all $y \in \supp\,\mu_1$, $\<w, y\> \leq 0$ for all $y \in \supp\,\mu_2$, and the barycenters of $\mu_1$ and $\mu_2$ belong to the line $\spn \{e\}$.
\end{lemma}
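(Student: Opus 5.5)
The plan is a continuity (intermediate value) argument over the rotation of the splitting hyperplane. For $w \in S^{d-1}$, split $\mu$ along the hyperplane $w^\perp$: let $\mu_1^w := \mu|_{\{\<w,y\> > 0\}}$ and $\mu_2^w := \mu|_{\{\<w,y\> < 0\}}$. The mass on $w^\perp$ itself is distributed between the two pieces in a way made precise below. Since $\mu$ is centered,
\begin{equation*}
    \int \<w,y\>\, d\mu_1^w = -\int \<w,y\>\, d\mu_2^w = \tfrac12 \int |\<w,y\>|\, d\mu.
\end{equation*}
The barycenters $b_j^w$ of the two pieces therefore satisfy $m_1^w b_1^w = -m_2^w b_2^w$, where $m_j^w$ is the mass of $\mu_j^w$. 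Indeed, since $\mu$ is centered, $\int y\, d\mu_1^w + \int y\, d\mu_2^w = 0$ as long as the hyperplane mass is split with zero total first moment. So the two barycenters are automatically collinear with the origin. It therefore suffices to find $w$ such that the single vector
\begin{equation*}
    F(w) := \int_{\{\<w,y\>>0\}} y\, d\mu(y)
\end{equation*}
(plus the appropriate share of the hyperplane mass) is parallel to $e$. Then both barycenters lie on $\spn\{e\}$. I will also need both masses to be positive, which holds as soon as $\mu$ is not concentrated on $w^\perp$.

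The key step is to produce such a $w$ by a topological argument. Note that $F(-w) = -F(w)$ when no mass lies on $w^\perp$, and that $\<F(w), w\> = \tfrac12\int|\<w,y\>|\,d\mu \ge 0$. Let $P$ denote the orthogonal projection onto $e^\perp$, and consider $G(w) := P F(w) \in e^\perp \cong \RR^{d-1}$. The map $G$ is odd on $S^{d-1}$. If it were continuous, Borsuk--Ulam would give a $w$ with $G(w) = 0$, i.e. $F(w) \in \spn\{e\}$, which is exactly what is needed.

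Two issues remain, and I expect the main obstacle to be the second one.

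First, degeneracy: if $F(w) = 0$, then $\int|\<w,y\>|\,d\mu = 0$, so $\mu$ lives on $w^\perp$. This gives the trivial case, where one can take the pieces to be suitable halves of $\mu$ (e.g.\ $\mu_1 = \mu_2 = \mu/2$, both with barycenter $0 \in \spn\{e\}$, and sign conditions trivially satisfied on $w^\perp$). This case is harmless.

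Second, continuity of $F$ fails when $\mu$ charges hyperplanes, for instance for atomic $\mu$, which is precisely the case of interest. I would handle it by approximation. Convolve $\mu$ with a small Gaussian to get centered measures $\mu^\varepsilon$ without hyperplane mass, for which $F^\varepsilon$ is continuous and odd. Apply Borsuk--Ulam to obtain $w_\varepsilon$ and the corresponding splits. Then pass to the limit: extract $w_\varepsilon \to w$, and take weak limits of the pieces $\mu^\varepsilon_j \to \mu_j$ (tight, with dominated first moments). The limits satisfy $\mu_1 + \mu_2 = \mu$, are supported in the closed half-spaces $\{\pm\<w,\cdot\>\ge0\}$, and have first moments in $\spn\{e\}$ by closedness. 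This limit procedure also takes care of the splitting of the mass on $w^\perp$.

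Finally, positivity of the masses must survive the limit. The only way a mass can vanish is if $\int|\<w,y\>|\,d\mu=0$ after all; otherwise both half-spaces carry first moment $\tfrac12\int|\<w,y\>|\,d\mu>0$, hence positive mass. The degenerate alternative is covered by the trivial splitting above.
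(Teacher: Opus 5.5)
Your proposal is correct and is essentially the paper's proof. You regularize $\mu$ by convolution (the paper uses a compactly supported radial mollifier instead of a Gaussian), apply Borsuk--Ulam to the odd continuous map sending $w$ to the $e^\perp$-component of the first moment of $\mu^\varepsilon$ over the half-space $\{\<w,\cdot\> \geq 0\}$, and then extract limits of $w_\varepsilon$ and of the two restricted measures via compactness, Prokhorov, and uniform integrability of first moments. The positivity step differs only cosmetically: your identity $\int \<w,y\>\, d\mu_1 = \frac12 \int |\<w,y\>|\, d\mu$ spells out what the paper states briefly, namely that a vanishing piece forces $\mu$ to live on $w^\perp$, in which case one takes $\mu_1 = \mu_2 = \mu/2$.
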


\begin{proof}
    Let us regularize $\mu$ by convolution. Let $u \in C^\infty(\RR^d,\RR_+)$ be a radial function supported on $B_1$ with $\int_{\RR^d} u(y)\, d y = 1$. For any $\varepsilon > 0$, let
    \begin{align*}
        u^\varepsilon &\colon y \mapsto \varepsilon^{-1/d} u(y / \varepsilon), &
        f^\varepsilon &\colon y \mapsto \int_{\RR^d} u^\varepsilon(y - z)\, d \mu(z),
    \end{align*}
    and let $\mu^\varepsilon$ denote the absolutely continuous measure with density $f^\varepsilon$. It is verified that $\mu^\varepsilon$ is a centered probability measure, and that the following uniform integrability condition holds:
    \begin{equation}
        \label{eq:separating_hyperplane_proof_uniform_integrability}
        \lim_{L \to \infty} \limsup_{\varepsilon \to 0} \int_{\RR^d \setminus B_L} \|y\|\, d \mu^\varepsilon(y) = 0.
    \end{equation}

    Let $(e_i)_{1 \leq i \leq d}$ denote the canonical basis of $\RR^d$. Assume without loss of generality that
    \begin{equation*}
        e = e_d.
    \end{equation*}

    For any $v \in S^{d-1}$, let $H_v$ denote the closed half-space $\{y \in \RR^d \mid \<v, y\> \geq 0\}$. Let us consider the function
    \begin{equation*}
        F^\varepsilon \colon S^{d-1} \to \RR^{d-1}, \quad v \mapsto \left(\int_{H_v} \<e_i, y\>\, d \mu^\varepsilon(y)\right)_{1 \leq i \leq d-1}.
    \end{equation*}
    Since $\mu^\varepsilon$ is centered, the function $F^\varepsilon$ is odd (i.e., $F^\varepsilon(v) + F^\varepsilon(-v) = 0$). By the dominated convergence theorem and the absolute continuity of $\mu^\varepsilon$, the function $F^\varepsilon$ is also continuous. Then, the Borsuk--Ulam theorem implies that there exists $w^\varepsilon \in S^{d-1}$ such that $F^\varepsilon(w^\varepsilon) = 0$.

    For $\varepsilon > 0$, let $\mu_1^\varepsilon := \mu^\varepsilon \resmes H_{w^\varepsilon}$ and $\mu_2^\varepsilon := \mu^\varepsilon \resmes H_{-w^{\varepsilon}}$. Then $\mu^\varepsilon = \mu_1^\varepsilon + \mu_2^\varepsilon$, and, since $F^\varepsilon(w^\varepsilon) = 0$, the barycenters of $\mu_1^\varepsilon$ and $\mu_2^\varepsilon$ belong to $\spn \{e_d\}$.

    By the compactness of $S^{d-1}$ and Prokhorov's theorem, there exists a decreasing sequence $(\varepsilon_n)_{n \geq 0}$ approaching zero, a vector $w \in S^{d-1}$, and measures $\mu_1$, $\mu_2$ on $\RR^d$ such that $w^{\varepsilon_n}$ converges to $w$ and, for $j \in \{1, 2\}$, $\mu_j^{\varepsilon_n}$ converges weakly to $\mu_j$. Testing against appropriate bounded continuous functions, one has $\mu = \mu_1 + \mu_2$, $\<w, y\> \geq 0$ for all $y \in \supp\,\mu_1$, and $\<w, y\> \leq 0$ for all $y \in \supp\,\mu_2$.

    Using also the uniform integrability property \cref{eq:separating_hyperplane_proof_uniform_integrability}, one has, for $1 \leq i \leq d-1$ and $1 \leq j \leq 2$, $\int_{\RR^d} \<e_i, y\>\, d \mu_j(y) = \lim_{n \to \infty} \int_{\RR^d} \<e_i, y\>\, d \mu_j^{\varepsilon_n}(y) = 0$, so the barycenters of $\mu_1$ and $\mu_2$ belong to $\spn \{e_d\}$.

    We did not yet prove that $\mu_1$ and $\mu_2$ have positive mass, but since $\mu$ is centered, if, e.g., $\mu_1(\RR^d) = 0$, then one has $\<w, y\> = 0$ for all $y \in \supp\,\mu_2$, and thus one can replace $\mu_1$ and $\mu_2$ by two equal measures, i.e., $\mu_2 / 2$ and $\mu_2 / 2$. This concludes the proof.
\end{proof}

\begin{remark}
    Conceptually, assuming without loss of generality that $e = e_d$, the above proof amounts to applying a version of the ham sandwich theorem to the signed measures $\<e_i, y\>\, d \mu(y)$ for $1 \leq i \leq d-1$ and to, e.g., the uniform measure on the unit ball $B_1$. See also~\cite{cox1984}, where a similar regularization argument is used to prove a version of the ham sandwich theorem for general measures. \Cref{lemma:separating_hyperplane} also brings to mind a trick used by Lehec~\cite[section~3]{lehec2009}.
\end{remark}

\subsection{Proof of the main estimate}
\label{subsec:proof_main_estimate}

The proof of \cref{thm:stability} now results from a simple combination of \cref{prop:lipschitz_lipschitz_estimate,prop:lipschitz_nonlipschitz_estimate}.

\begin{proof}[Proof of \cref{thm:stability}]
    By a simple rescaling argument \cref{eq:invarianceMMPeq}, assume that $\mu$ and $\nu$ are probability measures. Together with \cref{eq:totalmassepsiEq} this means $\|e^{-\psi_\mu(\,\cdot\,)} - e^{-\psi_\nu(\,\cdot\, - v)}\|_{L^1(\RR^d)}$ cannot be greater than $2$; accordingly, we may assume without loss of generality that $W_1(\mu, \nu) / R$ is smaller than an arbitrary positive constant, as long as this constant depends only on $d$ and $R/r$.

    For now, let us assume that $W_1(\mu, \nu) / R \leq r/(2R)$. Then, using the Kantorovich--Rubinstein duality formula \cref{eq:kantorovich_rubinstein},
    \begin{align*}
        \int_{\RR^d} \|y\|\, d \nu(y)
        &= \int_{\RR^d} \|y\|\, d \mu(y) - \int_{\RR^d} \|y\|\, d (\mu - \nu)(y)
        \leq \int_{\RR^d} \|y\|\, d \mu(y) + W_1(\mu, \nu) \\
        &\leq R + \frac{r}{2}
        \leq \frac{3}{2} R
    \end{align*}
    and, similarly,
    \begin{equation*}
        \inf_{w \in S^{d-1}} \int_{\RR^d} |\<w, y\>|\, d \nu(y)
        \geq \inf_{w \in S^{d-1}} \int_{\RR^d} |\<w, y\>|\, d \mu(y) - W_1(\mu, \nu)
        \geq \frac{r}{2}.
    \end{equation*}

    By \cref{prop:lipschitz_lipschitz_estimate,prop:lipschitz_nonlipschitz_estimate} with $K = \overline {B_L}$ for some $L > 0$: there exists $v$, $v_2$, $v_3 \in \RR^d$ such that
    \begin{align*}
        \|e^{-\psi_\mu(\,\cdot\,)} - e^{-\psi_\nu(\,\cdot\, - v)}\|_{L^1(\RR^d)}
        &\leq \|e^{-\psi_\mu(\,\cdot\,)} - e^{-\psi_{\mu, \overline {B_L}}(\cdot - v_2)}\|_{L^1(\RR^d)} \\
        &\quad + \|e^{-\psi_\nu(\,\cdot\, - v)} - e^{-\psi_{\nu, \overline {B_L}}(\cdot - v_3)}\|_{L^1(\RR^d)} \\
        &\quad + \|e^{-\psi_{\mu, \overline {B_L}}(\cdot - v_2)} - e^{-\psi_{\nu, \overline {B_L}}(\cdot - v_3)}\|_{L^1(\RR^d)} \\
        &\lesssim_{d, R/r} \left(1 + (r L)^{(d-1)/2}\right) e^{-r L / (8 \sqrt{d})} + (L W_1(\mu, \nu))^{1/2}.
    \end{align*}
    Choosing $L = \frac{8 \sqrt{d}}{r} \log (1 + R / W_1(\mu, \nu))$, this yields
    \begin{equation*}
        \|e^{-\psi_\mu(\,\cdot\,)} - e^{-\psi_\nu(\,\cdot\, - v)}\|_{L^1(\RR^d)} \lesssim_{d, R/r} \left(\frac{W_1(\mu, \nu)}{R} \log \left(1 + \frac{R}{W_1(\mu, \nu)}\right)\right)^{1/2},
    \end{equation*}
    as announced.
\end{proof}

\section{Semidiscrete MMP}
\label{sec:semidiscrete_mmp}

In this section, we assume that $\nu$ is a centered probability measure on $\RR^d$ that is not supported on a hyperplane and that has \emph{finite support}. The measure $\nu$ should be thought of as approximating the measure $\mu$ for which we are trying to solve the MMP, i.e., find $\mm^{-1}(\mu)$ \cref{eq:mminvEq}. Then, according to \cref{thm:stability}, $\psi_\nu$ \cref{eq:psimu} is an approximation of $\psi_\mu$ (modulo a translation). The idea is then to recast the MMP for the finitely-supported $\nu$ as the minimization of an energy over $\RR^N$. The latter, in turn, is equivalent to considering piecewise affine competitors for a bona fide MMP. Here, one makes use of Legendre duality to go back and forth between vectors and such competitors. This approach gives a solution of the MMP for such $\nu$ (\cref{prop:discrete_minimizer}).

The discrete problem
and its associated energy functional $E_\nu$ is introduced in \cref{subsec:discrete_problem}, and its relation to the original (continuous) MMP and its energy $\cE_{\nu}$ is described in \cref{prop:continuous_discrete_relation,prop:discrete_minimizer}. In \cref{subsec:derivatives_discrete_energy}, we compute the first and second derivatives of the discrete energy functional $E_\nu$. This is motivated by \cref{subsec:damped_newton}, in which we describe a damped Newton method for solving the discrete MMP.

\subsection{Discrete problem}
\label{subsec:discrete_problem}

We need to introduce some notation. We denote by $N \in \NN$ the number of elements in $\supp\,\nu$, and we let $(y_i)_{1 \leq i \leq N}$ be such that $\supp\,\nu = \{y_i \mid 1 \leq i \leq N\}$.

For any vector $\Phi = (\Phi_i)_{1 \leq i \leq N} \in \RR^N$, we define $\Phi^* \colon \RR^d \to \RR$ by
\begin{equation}
    \label{eq:discrete_legendre}
    \Phi^*(x) := \max_{1 \leq i \leq N} (\<x, y_i\> - \Phi_i).
\end{equation}
The function $\Phi^*$ may be interpreted as the Legendre transform of the function assigning the values $\Phi_i$ to the points $y_i$ and the value $\infty$ to all points of $\RR^d \setminus \supp\,\nu$. Observe that $\Phi^{**}$, the Legendre transform of $\Phi^*$, is the largest convex function $\varphi \colon \RR^d \to \RR \cup \{\infty\}$ satisfying $\varphi(y_i) \leq \Phi_i$ for all $1 \leq i \leq N$.

Let us define discrete counterparts to the functionals $\cI$ and $\cE_\nu$ from \cref{eq:cI} and \cref{eq:cE_mu} by letting, for any $\Phi \in \RR^N$,
\begin{align*}
    I(\Phi) &:= -\log \int_{\RR^d} e^{-\Phi^*(x)}\, d x, &
    E_\nu(\Phi) &:= \sum_{1 \leq i \leq N} \Phi_i \nu(\{y_i\}) + I(\Phi).
\end{align*}

The functional $E_\nu$ is related to $\cE_\nu$ as follows.

\begin{proposition}
    \label{prop:continuous_discrete_relation}
    For any $\Phi \in \RR^n$, one has $\cE_\nu(\Phi^{**}) \leq E_\nu(\Phi)$. Moreover, one has $\cE_\nu(\Phi^{**}) = E_\nu(\Phi)$ if and only if $\Phi_i = \Phi^{**}(y_i)$ for all $1 \leq i \leq N$.
\end{proposition}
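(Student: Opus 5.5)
The plan is to compare the two functionals term by term. The first step is to show that the $\cI$-parts coincide: $\cI(\Phi^{**}) = I(\Phi)$. By definition $\cI(\Phi^{**}) = -\log \int_{\RR^d} e^{-(\Phi^{**})^*(x)}\, d x$. The function $\Phi^*$ defined in \cref{eq:discrete_legendre} is a finite maximum of affine functions, hence convex, finite and continuous on $\RR^d$. By the Fenchel--Moreau theorem, $(\Phi^{**})^* = \Phi^{***} = \Phi^*$. Therefore $\cI(\Phi^{**}) = -\log \int_{\RR^d} e^{-\Phi^*(x)}\, d x = I(\Phi)$.

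With this identity (note $E_\nu$ takes values in $\RR\cup\{\pm\infty\}$), the difference between the two energies reduces to the linear terms:
\begin{equation*}
    E_\nu(\Phi) - \cE_\nu(\Phi^{**}) = \sum_{1 \leq i \leq N} \big(\Phi_i - \Phi^{**}(y_i)\big)\, \nu(\{y_i\}),
\end{equation*}
where we used $\int \Phi^{**}\, d\nu = \sum_i \Phi^{**}(y_i)\nu(\{y_i\})$. One should first check that $\Phi^{**}$ is $\nu$-integrable, so that $\cE_\nu(\Phi^{**})$ is defined. Since $\nu$ is finitely supported, it suffices that each $\Phi^{**}(y_i)$ is finite. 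The bound $\Phi^{**}(y_i) \leq \Phi_i$ follows either from the characterization of $\Phi^{**}$ as the largest convex function below the data, or directly from $\Phi^*(x) \geq \<x, y_i\> - \Phi_i$, which gives $\Phi^{**}(y_i) = \sup_x (\<x,y_i\> - \Phi^*(x)) \leq \Phi_i$. For the lower bound, $\Phi^{**}(y_i) > -\infty$ because $\Phi^*$ is finite.

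Each term in the sum is then nonnegative, which gives $\cE_\nu(\Phi^{**}) \leq E_\nu(\Phi)$. For the equality case, every $y_i$ lies in $\supp\,\nu$, so $\nu(\{y_i\}) > 0$. A sum of nonnegative terms with positive weights vanishes iff each $\Phi_i - \Phi^{**}(y_i)$ vanishes, which is exactly the stated criterion.

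The only delicate point is the degenerate case $I(\Phi) = \pm\infty$, for instance $\int e^{-\Phi^*} = \infty$ when the $y_i$ do not surround the origin. In that case both energies equal the same infinite value, since the linear parts are finite. The inequality then still holds, and in the equality clause I would read "$=$" as equality of extended reals. Under that convention the "only if" direction fails when $I(\Phi)=\pm\infty$, so the equality statement should be understood for finite $I(\Phi)$; this holds for instance because $\nu$ is centered and not supported on a hyperplane, which makes $\Phi^*$ grow linearly and $0<\int e^{-\Phi^*}<\infty$. I would add this remark and otherwise treat the argument as routine.
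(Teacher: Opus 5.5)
Your proof is correct and follows the paper's own argument: Fenchel--Moreau gives $\Phi^{***}=\Phi^*$, hence $\cI(\Phi^{**})=I(\Phi)$, and the conclusion follows from $\Phi^{**}(y_i)\le\Phi_i$ together with $\nu(\{y_i\})>0$. Your extra remark on the degenerate case goes beyond what the paper's proof spells out, and it is accurate. Only $I(\Phi)=-\infty$ can actually occur, since $\Phi^*$ is finite. That case is excluded by the standing assumption that $\nu$ is centered and not supported on a hyperplane: this places $0$ in the interior of $\Conv(\supp\,\nu)$, so $0<\int_{\RR^d} e^{-\Phi^*(x)}\,dx<\infty$.
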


\begin{proof}
    The function $\Phi^*$ is convex and lower semicontinuous. Therefore, $\Phi^{***} = \Phi^*$, and thus $\cI(\Phi^{**}) = I(\Phi)$. We conclude the proof using that $\Phi^{**}(y_i) \leq \Phi_i$ for all $1 \leq i \leq N$.
\end{proof}

We can now show that, for the finitely supported measure $\nu$, the moment measure problem reduces to the problem of minimizing $E_\nu$.

\begin{proposition}
    \label{prop:discrete_minimizer}
    The function $E_\nu$ admits a minimizer on $\RR^N$, and for any such minimizer $\Phi \in \RR^N$, $\Phi^{**}$ coincides with $\varphi_\nu$ up to addition of an affine function.
\end{proposition}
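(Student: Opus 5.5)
Our plan is to relate the minimization of $E_\nu$ over $\RR^N$ to the minimization of $\cE_\nu$ over $\nu$-integrable functions, and then to conclude with \cref{prop:energy_minimization} and the uniqueness part of \cref{thm:existence_uniqueness}.

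\emph{Step 1: minimizing $E_\nu$ is equivalent to minimizing $\cE_\nu$.} Only the values of a function on $\supp\,\nu$ enter the linear term of $\cE_\nu$. For any $\nu$-integrable $\varphi$, set $\Phi_i := \varphi(y_i)$, which is finite since $\nu(\{y_i\})>0$. Then $\Phi^*\le\varphi^*$, hence $I(\Phi)\le\cI(\varphi)$, and therefore $E_\nu(\Phi)\le\cE_\nu(\varphi)$. Conversely, \cref{prop:continuous_discrete_relation} gives $\cE_\nu(\Phi^{**})\le E_\nu(\Phi)$. It follows that
\begin{equation*}
    \inf_{\RR^N} E_\nu = \inf \cE_\nu .
\end{equation*}

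\emph{Step 2: existence of a minimizer of $E_\nu$.} By \cref{thm:existence_uniqueness} and \cref{prop:energy_minimization}, $\cE_\nu$ admits the minimizer $\varphi_\nu=\psi_\nu^*$. This function is finite on $\supp\,\nu$ by integrability. Take $\Phi_i:=\varphi_\nu(y_i)$. Step 1 gives $E_\nu(\Phi)\le \cE_\nu(\varphi_\nu)=\inf\cE_\nu=\inf E_\nu$, so $\Phi$ is a minimizer. If one prefers to avoid the continuous theory, there is a direct route. Note $E_\nu$ is invariant under $\Phi\mapsto\Phi+c+(\<v,y_i\>)_i$ because $\nu$ is centered. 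After quotienting by these invariances, one shows coercivity: if $\Phi^*$ has a direction of flatness or non-growth, $\int e^{-\Phi^*}=\infty$ unless the $y_i$ span $\RR^d$ with $0$ in the interior of their convex hull, which holds since $\nu$ is centered and not on a hyperplane. Continuity of $E_\nu$ then gives a minimizer. I would use the first, shorter route.

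\emph{Step 3: identification of minimizers.} Let $\Phi$ be any minimizer of $E_\nu$. By \cref{prop:continuous_discrete_relation} and Step 1,
\begin{equation*}
    \cE_\nu(\Phi^{**})\le E_\nu(\Phi)=\inf\cE_\nu ,
\end{equation*}
so $\Phi^{**}$ minimizes $\cE_\nu$. It is convex, lower semicontinuous, and finite on $\Conv\{y_i\}\supset\supp\,\nu$, hence $\nu$-integrable. \Cref{prop:energy_minimization} then says that $(\Phi^{**})^*=\Phi^*$ is essentially continuous and has moment measure $c\,\nu$ with $c=\int e^{-\Phi^*}$. The function $\Phi^*+\log c$ is essentially continuous with moment measure $\nu$, by \cref{eq:invarianceMMPeq}. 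By the uniqueness in \cref{thm:existence_uniqueness}, $\Phi^*+\log c=\psi_\nu(\,\cdot\,+w)$ for some $w$. Taking Legendre transforms gives $\Phi^{**}=\varphi_\nu-\<w,\cdot\>+\log c$ on all of $\RR^d$. Both sides are lower semicontinuous convex functions, so biduality applies and this is exactly the claimed affine relation.

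\emph{Main obstacle.} The main obstacle is making Step 1 airtight: checking finiteness and integrability of all objects, and handling the degenerate values $\int e^{-\Phi^*}\in\{0,\infty\}$. The second cannot occur at a minimizer because $E_\nu<\infty$ somewhere. Since $\Phi^*$ is finite everywhere, the first case does not arise.
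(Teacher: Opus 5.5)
Your proof is correct and follows essentially the same route as the paper. The chain $E_\nu(\Phi)\ge\cE_\nu(\Phi^{**})\ge\cE_\nu(\varphi_\nu)\ge E_\nu\bigl((\varphi_\nu(y_i))_i\bigr)$ gives existence, and \cref{prop:energy_minimization} together with the uniqueness part of \cref{thm:existence_uniqueness} identifies $\Phi^{**}$; you are slightly more careful than the paper in applying these to $\Phi^*=(\Phi^{**})^*$ and then dualizing. One small slip is in your closing remark: $\int e^{-\Phi^*}=\infty$ would mean $E_\nu(\Phi)=-\infty$, so it is ruled out not because $E_\nu<\infty$ somewhere, but because $E_\nu\ge\cE_\nu(\varphi_\nu)>-\infty$ by your Step~1 (or directly because $0\in\inter\Conv\{y_i\}$ forces $\Phi^*$ to grow linearly).
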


\begin{proof}
    Using \cref{prop:continuous_discrete_relation} and then the fact that $\varphi_\nu$ minimizes $\cE_\nu$ (see \cref{prop:energy_minimization}), one has, for any $\Phi \in \RR^N$,
    \begin{equation}
        \label{eq:discrete_minimizer_proof_inequalities}
        E_\nu(\Phi) \geq \cE_\nu(\Phi^{**}) \geq \cE_\nu(\varphi_\nu).
    \end{equation}

    Let $\Phi_\nu := (\varphi_\nu(y_i))_{1 \leq i \leq N}$. Observe that, for all $x \in \RR^d$,
    \begin{equation*}
        (\Phi_\nu)^*(x) = \max_{1 \leq i \leq N} (\<x, y_i\> - \varphi_\nu(y_i)) \leq \sup_{y \in \RR^d} (\<x, y\> - \varphi_\nu(y)) = (\varphi_\nu)^*(x).
    \end{equation*}
    Thus, $I(\Phi_\nu) \leq \cI(\varphi_\nu)$, and $E_\nu(\Phi_\nu) \leq \cE_\nu(\varphi_\nu)$. Using \cref{eq:discrete_minimizer_proof_inequalities}, $\Phi_\nu$ minimizes $E_\nu$, and $E_\nu(\Phi_\nu) = \cE_\nu(\varphi_\nu)$.

    For any minimizer $\Phi \in \RR^N$ of $E_\nu$, one has $E_\nu(\Phi) = E_\nu(\Phi_\nu) = \cE_\nu(\varphi_\nu)$. Then, the inequalities in \cref{eq:discrete_minimizer_proof_inequalities} become equalities. This implies that $\Phi^{**}$ minimizes $\cE_\nu$. By \cref{prop:energy_minimization}, $\Phi^{**}$ has moment measure $\lambda \nu$, where $\lambda = \int_{\RR^d} e^{-\Phi^*(x)}\, d x$. Using \cref{thm:existence_uniqueness}, we deduce that $\Phi^{**}$ coincides with $\varphi_\nu$ up to addition of an affine function, as announced.
\end{proof}

\subsection{Derivatives of the discrete energy and Laguerre diagrams}
\label{subsec:derivatives_discrete_energy}

Given $\Phi \in \RR^N$, all information about the convex functions $\Phi^*$ and $\Phi^{**}$ can be computed from the vector $\Phi$, for instance using a standard convex hull algorithm and the fact the epigraph of $\Phi^{**}$ is the unbounded convex polygon generated by the points $\{(y_i, \Phi_i) \mid 1 \leq i \leq N\}$ and by the ray spanned by the last element of the canonical basis of $\RR^{d+1}$.

For any $\Phi \in \RR^N$ and $1 \leq i \leq N$, let us define the \emph{Laguerre cell}
\begin{equation}
    \label{eq:laguerre_cell}
    \Lag_i(\Phi) := \{x \in \RR^d \mid \forall 1 \leq j \leq N,\, \<x, y_j - y_i\> \leq \Phi_j - \Phi_i\}.
\end{equation}
Observe that $\Lag_i(\Phi_i)$ is either empty or a (potentially unbounded) polyhedron of dimension $d$ or less. It follows from the definition \cref{eq:discrete_legendre} of $\Phi^*$ that
\begin{equation}
    \label{eq:legendre_on_cell}
    \Phi^* = \<y_i, \cdot\> - \Phi_i \quad \text{on } \Lag_i(\Phi).
\end{equation}
Moreover, one has $\bigcup_{1 \leq i \leq N} \Lag_i(\Phi) = \RR^d$, and $\inter(\Lag_i(\Phi)) \cap \inter(\Lag_j(\Phi)) = \emptyset$ whenever $i \neq j$. The resulting polyhedral partition of $\RR^d$ is typically called the \emph{Laguerre diagram}, or \emph{power diagram}, associated to the points $y_i$ and the weights $\Phi_i$. We refer to~\cite{kitagawa2019} for a more detailed overview of these notions and how they are used in the setting of semidiscrete optimal transport. Let us only mention here that the notion of Laguerre cells is related to the subdifferential of $\Phi^{**}$: if $x \in \Lag_i(\Phi)$, then $y_i \in \partial \Phi^*(x)$, and thus $x \in \partial \Phi^{**}(y_i)$.

Let us study the first and second derivatives of the energy $E_\nu$. A key observation used in the analysis below is that adjusting the values of the elements of the vector $\Phi$ amounts to adjusting the $N$ hyperplanes involved in the definition \cref{eq:discrete_legendre} of $\Phi^*$.

\begin{lemma}
    \label{lemma:integral_derivative}
    For any $\Phi \in \RR^N$ and $1 \leq i \leq N$, one has
    \begin{equation}
        \label{eq:integral_derivative}
        \frac{\partial}{\partial \Phi_i} \int_{\RR^d} e^{-\Phi^*(x)}\, d x = \int_{\Lag_i(\Phi)} e^{-\Phi^*(x)}\, d x.
    \end{equation}
\end{lemma}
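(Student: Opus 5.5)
The plan is to differentiate under the integral sign, using that $\Phi^*$ depends on $\Phi_i$ in a simple, monotone, piecewise-affine way.

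Fix $\Phi$ and $i$, and let $\Phi^t := \Phi + t e_i$, where $e_i$ here denotes the $i$-th basis vector of $\RR^N$. From \cref{eq:discrete_legendre}, for each fixed $x$ the map $t \mapsto (\Phi^t)^*(x) = \max\bigl(\<x, y_i\> - \Phi_i - t,\ \max_{j \neq i}(\<x, y_j\> - \Phi_j)\bigr)$ is convex, nonincreasing and $1$-Lipschitz in $t$. Its one-sided derivatives at $t = 0$ are computed pointwise:
\begin{itemize}
    \item if $x$ lies in the interior of $\Lag_i(\Phi)$, i.e.\ the $i$-th affine function is the strict maximizer, the derivative is $-1$;
    \item if $x \notin \Lag_i(\Phi)$, the derivative is $0$;
    \item on $\partial \Lag_i(\Phi)$ the two one-sided derivatives may differ.
\end{itemize}
The boundary $\partial \Lag_i(\Phi)$ is contained in finitely many hyperplanes $\{\<x, y_j - y_i\> = \Phi_j - \Phi_i\}$, $j \neq i$ (the $y_j$ are distinct, so these are genuine hyperplanes). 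Hence it is Lebesgue-null, and for a.e.\ $x$
\begin{equation*}
    \frac{d}{dt}\Big|_{t=0} e^{-(\Phi^t)^*(x)} = \mathbf{1}_{\Lag_i(\Phi)}(x)\, e^{-\Phi^*(x)}.
\end{equation*}

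To justify exchanging derivative and integral, I would use dominated convergence on the difference quotients. Since $(\Phi^t)^*$ is $1$-Lipschitz in $t$ and nonincreasing, for $|t| \le 1$ we have $(\Phi^t)^* \ge \Phi^* - 1$. The mean value theorem then gives
\begin{equation*}
    \bigl|e^{-(\Phi^t)^*(x)} - e^{-\Phi^*(x)}\bigr| \le |t|\, e^{1 - \Phi^*(x)}.
\end{equation*}
So the difference quotients are dominated by $e \cdot e^{-\Phi^*}$.

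This domination is the step I expect to be the main obstacle: it requires $\int_{\RR^d} e^{-\Phi^*(x)}\, dx < \infty$, which is not automatic for an arbitrary finitely supported set of points. I would argue it from the standing assumptions on $\nu$. Because $\nu$ is centered and not supported on a hyperplane, $0$ lies in the interior of $\Conv\{y_i\}$. Hence there is $\delta > 0$ with $\max_i \<x, y_i\> \ge \delta \|x\|$ for all $x$, which gives $\Phi^*(x) \ge \delta\|x\| - \max_i \Phi_i$. This exponential decay makes $e^{-\Phi^*}$ integrable, so the dominating function is integrable. The same bound holds uniformly for $\Phi^t$, $|t| \le 1$, so both sides of \cref{eq:integral_derivative} are finite.

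Finally, dominated convergence identifies the derivative with $\int_{\Lag_i(\Phi)} e^{-\Phi^*(x)}\, dx$. Since the pointwise derivative exists a.e.\ and gives the same value from both sides, the two one-sided derivatives of the integral coincide, so the integral is genuinely differentiable in $\Phi_i$. Optionally, I would also note continuity of the right-hand side in $\Phi$, again by dominated convergence, which yields $C^1$ regularity for later use.
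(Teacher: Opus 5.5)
Your proof is correct and follows essentially the same route as the paper. Both arguments differentiate $e^{-(\Phi+te_i)^*(x)}$ pointwise off a Lebesgue-null union of hyperplanes, dominate the difference quotients by a constant multiple of $e^{-\Phi^*}$ using $|(\Phi+te_i)^*-\Phi^*|\le |t|$, and conclude by dominated convergence. Your explicit check that $e^{-\Phi^*}$ is integrable (because $0$ lies in the interior of $\Conv\{y_i\}$, as $\nu$ is centered and not supported on a hyperplane) fills in a step the paper leaves implicit.
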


\begin{proof}
    Let $(e_j)_{1 \leq j \leq N}$ denote the canonical basis of $\RR^N$. It follows from \cref{eq:laguerre_cell} that, for any $1 \leq j \leq N$ and $x \in \inter(\Lag_j(\Phi))$, there exists $c_{\Phi, x} > 0$, depending only on $\Phi$ and $x$, such that, for all $h \in [-c_{\Phi, x}, c_{\Phi, x}]$, one has $x \in \Lag_j(\Phi + h e_i)$. Using also \cref{eq:legendre_on_cell}, we deduce that
    \begin{equation*}
        \frac{\partial}{\partial \Phi_i} e^{-\Phi^*(\,\cdot\,)}
        = \frac{\partial}{\partial \Phi_i} e^{\Phi_i - \<\cdot, y_i\>}
        = e^{\Phi_i - \<\cdot, y_i\>}
        = e^{-\Phi^*(\,\cdot\,)}
        \quad \text{on } \inter(\Lag_i(\Phi))
    \end{equation*}
    and, for $j \neq i$,
    \begin{equation*}
        \frac{\partial}{\partial \Phi_i} e^{-\Phi^*(\,\cdot\,)}
        = \frac{\partial}{\partial \Phi_i} e^{\Phi_j - \<y_i, \cdot\>}
        = 0
        \quad \text{on } \inter(\Lag_j(\Phi)).
    \end{equation*}
    On the other hand $e^{-\Phi^*(x)}$ is not necessarily differentiable with respect to $\Phi$ if $x$ belongs to the Lebesgue negligible set $\RR^d \setminus \bigcup_{1 \leq j \leq N} \inter(\Lag_j(\Phi))$. Still, it follows from the definition \cref{eq:discrete_legendre} of $\Phi^*$ that, for all $h \in \RR \setminus \{0\}$ and $x \in \RR^d$, one has $\Phi^*(x) - |h| \leq (\Phi + h e_i)^*(x) \leq \Phi^*(x) + |h|$, and thus
    \begin{equation*}
        \left|\frac{e^{-(\Phi + h e_i)^*(x)} - e^{-\Phi^*(x)}}{h}\right|
        \leq \frac{\max \{e^{|h|} - 1, 1 - e^{|h|}\}}{|h|} e^{-\Phi^*(x)}
        = \frac{e^{|h|} - 1}{|h|} e^{-\Phi^*(x)}.
    \end{equation*}
    Then, by the dominated convergence theorem,
    \begin{align*}
        \frac{\partial}{\partial \Phi_i} \int_{\RR^d} e^{-\Phi^*(x)}\, d x
        &= \lim_{\substack{h \to 0 \\ h \neq 0}} \int_{\RR^d} \frac{e^{-(\Phi + h e_i)^*(x)} - e^{-\Phi^*(x)}}{h}\, d x \\
        &= \sum_{1 \leq j \leq N} \int_{\inter(\Lag_j(\Phi))} \frac{\partial}{\partial \Phi_i} e^{-\Phi^*(x)}\, d x
        = \int_{\Lag_i(\Phi)} e^{-\Phi^*(x)}\, d x,
    \end{align*}
    as announced.
\end{proof}

\begin{proposition}
    For any $\Phi \in \RR^N$ and $1 \leq i \leq N$, one has
    \begin{equation}
        \label{eq:first_derivative}
        \frac{\partial E_\nu}{\partial \Phi_i}(\Phi) = \nu(\{y_i\}) - \frac{\int_{\Lag_i(\Phi)} e^{-\Phi^*(x)}\, d x}{\int_{\RR^d} e^{-\Phi^*(x)}\, d x}.
    \end{equation}
\end{proposition}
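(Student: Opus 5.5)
The plan is to differentiate the two terms of $E_\nu$ separately. The linear term $\sum_{j} \Phi_j \nu(\{y_j\})$ has partial derivative $\nu(\{y_i\})$ with respect to $\Phi_i$. For the second term, $I(\Phi) = -\log Z(\Phi)$ with $Z(\Phi) := \int_{\RR^d} e^{-\Phi^*(x)}\, d x$, and the chain rule gives $\partial_{\Phi_i} I = -\partial_{\Phi_i} Z / Z$. \Cref{lemma:integral_derivative} identifies $\partial_{\Phi_i} Z$ as $\int_{\Lag_i(\Phi)} e^{-\Phi^*(x)}\, d x$. Together these give \cref{eq:first_derivative}.

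The only step needing care is that the chain rule is legitimate: $Z(\Phi)$ must lie in $(0,\infty)$, so that $\log$ is differentiable at $Z(\Phi)$. Positivity holds because $\Phi^*$ is a finite maximum of affine functions, hence finite everywhere. Finiteness uses that $\nu$ is centered and not supported on a hyperplane. Then $0$ lies in the interior of the convex hull of $\{y_1,\dots,y_N\}$, so some $\delta>0$ gives $\max_i \<x, y_i\> \geq \delta \|x\|$ for all $x$. Hence $\Phi^*(x) \geq \delta\|x\| - \max_i \Phi_i$, and $e^{-\Phi^*}$ is integrable. With this, \cref{lemma:integral_derivative} applies verbatim, as its proof already uses domination by $e^{-\Phi^*}$, which is integrable by this bound.

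I expect the main (mild) obstacle to be only this integrability check, since the substantive differentiation under the integral sign is already done in \cref{lemma:integral_derivative}. The remaining work is the one-line combination of the two derivatives.
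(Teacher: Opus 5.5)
Your proposal is correct and matches the paper's argument, which likewise obtains the formula from \cref{lemma:integral_derivative} and the definition of $E_\nu$ (the linear term plus the chain rule for $-\log$). You also verify that $0 < \int_{\RR^d} e^{-\Phi^*(x)}\,dx < \infty$, using that $0$ lies in the interior of $\Conv(\supp\,\nu)$ since $\nu$ is centered and not supported on a hyperplane; this is a correct detail the paper leaves implicit.
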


\begin{proof}
    This follows directly from \cref{lemma:integral_derivative} and the definition of $E_\nu$.
\end{proof}

Let us introduce the set $U \subset \RR^N$ defined as
\begin{equation*}
    U := \{\Phi \in \RR^N \mid \forall 1 \leq i \leq N,\, \inter(\Lag_i(\Phi)) \neq \emptyset\}.
\end{equation*}

\begin{proposition}
    \label{prop:minimizer_in_U}
    If $\Phi \in \RR^N$ minimizes $E_\nu$, then $\Phi \in U$.
\end{proposition}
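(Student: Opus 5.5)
A minimizer $\Phi$ of $E_\nu$ is a critical point of the $C^1$ function $E_\nu$ on $\RR^N$, so \cref{eq:first_derivative} applies at $\Phi$. The plan is to read off from this first-order condition that every Laguerre cell carries positive mass, and to conclude from this that every cell has nonempty interior.

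First I will justify that $E_\nu$ is differentiable everywhere. The partial derivatives in \cref{eq:first_derivative} exist at every point, and they are continuous in $\Phi$: the integrand $e^{-\Phi^*}$ depends continuously on $\Phi$ in sup-norm, the cell boundaries move continuously, and dominated convergence applies with the domination used in \cref{lemma:integral_derivative}. Hence $E_\nu$ is $C^1$. One also needs $\int_{\RR^d} e^{-\Phi^*(x)}\,dx$ to be finite and positive for this formula to make sense. At a minimizer $E_\nu(\Phi)<\infty$, so the integral is positive. It is also finite, because $\nu$ is not supported on a hyperplane and is centered, so $0$ lies in the interior of $\Conv(\supp\nu)$; then $\Phi^*$ grows linearly at infinity and is bounded below by $c\|x\|-C$.

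Since $\Phi$ is an interior minimizer, $\nabla E_\nu(\Phi)=0$, which by \cref{eq:first_derivative} gives
\begin{equation*}
    \int_{\Lag_i(\Phi)} e^{-\Phi^*(x)}\, dx = \nu(\{y_i\}) \int_{\RR^d} e^{-\Phi^*(x)}\, dx > 0, \qquad 1 \le i \le N,
\end{equation*}
where the strict inequality uses that each $y_i$ is in $\supp\nu$, so $\nu(\{y_i\})>0$. Consequently $\Lag_i(\Phi)$ has positive Lebesgue measure.

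$\Lag_i(\Phi)$ is a convex polyhedron by \cref{eq:laguerre_cell}. A convex set of positive Lebesgue measure is not contained in any hyperplane, so it is $d$-dimensional and has nonempty interior. Thus $\inter(\Lag_i(\Phi))\neq\emptyset$ for every $i$, i.e.\ $\Phi\in U$.

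The main obstacle is the justification that $E_\nu$ is finite and $C^1$, so that the first-order condition genuinely holds at the minimizer. The key inputs are the linear growth of $\Phi^*$, which uses the centering of $\nu$ and the fact that it is not supported on a hyperplane, and the continuity of the cell integrals in $\Phi$. If proving continuity is awkward, I would instead use one-sided directional derivatives. The dominated convergence argument of \cref{lemma:integral_derivative} already gives the partial derivative $\partial_{\Phi_i}E_\nu$. At a minimizer this partial derivative must vanish, since $t\mapsto E_\nu(\Phi+te_i)$ is minimized at $t=0$. This suffices without needing full $C^1$ regularity.
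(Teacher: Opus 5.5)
Your proof is correct and follows the paper's argument: the first-order condition $\nabla E_\nu(\Phi)=0$ from \cref{eq:first_derivative}, together with $\nu(\{y_i\})>0$, forces each Laguerre cell to have positive Lebesgue measure and hence nonempty interior. Your extra steps fill in details the paper leaves implicit, namely finiteness of $\int_{\RR^d} e^{-\Phi^*(x)}\,dx$ via $0\in\inter\Conv(\supp\nu)$, using only the vanishing of partial derivatives rather than full $C^1$ regularity, and passing from positive measure to nonempty interior by convexity.
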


\begin{proof}
    This follows from the first-order optimality condition $\nabla E_\nu(\Phi) = 0$, where $\nabla E_\nu(\Phi)$ is given by the formula \cref{eq:first_derivative}, and from the fact that $\nu(\{y_i\}) > 0$ for all $1 \leq i \leq N$.
\end{proof}

\begin{proposition}
    Let $\Phi \in U$. For any $1 \leq i \leq N$, one has
    \begin{align}
        \label{eq:second_derivative_diagonal}
        \frac{\partial^2 E_\nu}{\partial \Phi_i^2}(\Phi)
        &= \left(\frac{\int_{\Lag_i(\Phi)} e^{-\Phi^*(x)}\, d x}{\int_{\RR^d} e^{-\Phi^*(x)}\, d x}\right)^2 - \frac{\int_{\Lag_i(\Phi)} e^{-\Phi^*(x)}\, d x}{\int_{\RR^d} e^{-\Phi^*(x)}\, d x} \\
        \notag
        &\quad + \sum_{j \neq i} \frac{\int_{\Lag_i(\Phi) \cap \Lag_j(\Phi)} e^{-\Phi^*(x)}\, d \cH^{d-1}(x)}{\|y_i - y_j\| \int_{\RR^d} e^{-\Phi^*(x)}\, d x}.
    \end{align}
    For any $1 \leq i$, $j \leq N$ with $i \neq j$, one has
    \begin{align}
        \label{eq:second_derivative_extradiagonal}
        \frac{\partial^2 E_\nu}{\partial \Phi_i \partial \Phi_j}(\Phi)
        &= \frac{\left(\int_{\Lag_i(\Phi)} e^{-\Phi^*(x)}\, d x\right) \left(\int_{\Lag_j(\Phi)} e^{-\Phi^*(x)}\, d x\right)}{\left(\int_{\RR^d} e^{-\Phi^*(x)}\, d x\right)^2} \\
        \notag
        &\quad - \frac{\int_{\Lag_i(\Phi) \cap \Lag_j(\Phi)} e^{-\Phi^*(x)}\, d \cH^{d-1}(x)}{\|y_i - y_j\| \int_{\RR^d} e^{-\Phi^*(x)}\, d x}.
    \end{align}
\end{proposition}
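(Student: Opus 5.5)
Writing $Z(\Phi) := \int_{\RR^d} e^{-\Phi^*(x)}\, d x$ and $m_i(\Phi) := \int_{\Lag_i(\Phi)} e^{-\Phi^*(x)}\, d x$, formula \cref{eq:first_derivative} reads $\partial_i E_\nu = \nu(\{y_i\}) - m_i/Z$. By the quotient rule and \cref{lemma:integral_derivative}, which gives $\partial_j Z = m_j$,
\begin{equation*}
    \frac{\partial^2 E_\nu}{\partial \Phi_i \partial \Phi_j} = \frac{m_i m_j}{Z^2} - \frac{\partial_j m_i}{Z}.
\end{equation*}
This already accounts for the first term in both \cref{eq:second_derivative_diagonal} and \cref{eq:second_derivative_extradiagonal}. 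What remains is to compute $\partial_j m_i$ for $\Phi \in U$, namely
\begin{equation*}
    \partial_i m_i = m_i - \sum_{j \neq i} \frac{\int_{\Lag_i \cap \Lag_j} e^{-\Phi^*}\, d\cH^{d-1}}{\|y_i - y_j\|},
    \qquad
    \partial_j m_i = \frac{\int_{\Lag_i \cap \Lag_j} e^{-\Phi^*}\, d\cH^{d-1}}{\|y_i - y_j\|} \quad (j \neq i).
\end{equation*}

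To compute these I use \cref{eq:legendre_on_cell} to write $m_i(\Phi) = e^{\Phi_i}\int_{\Lag_i(\Phi)} e^{-\<x, y_i\>}\, d x$, so the integrand is a fixed function times $e^{\Phi_i}$ and only the domain moves.

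The factor $e^{\Phi_i}$ contributes $m_i$ to $\partial_i m_i$ and nothing to $\partial_j m_i$ for $j \neq i$. The remaining contribution is a derivative of an integral of the fixed smooth function $g_i(x) = e^{-\<x,y_i\>}$ over a polyhedron whose facet $\Lag_i \cap \Lag_j$ lies on the hyperplane $\<x, y_j - y_i\> = \Phi_j - \Phi_i$.

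Increasing $\Phi_j$ by $h$ translates this hyperplane outward, away from $\Lag_i$, by $h/\|y_j - y_i\|$ in the normal direction. Increasing $\Phi_i$ translates all facets of $\Lag_i$ inward by $h/\|y_i - y_j\|$ simultaneously.

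By the standard first-variation formula for a polyhedron with moving facets, the derivative of $\int_{\Lag_i} g_i$ is $\sum_j (\text{normal velocity of facet } j)\int_{\Lag_i\cap\Lag_j} g_i\, d\cH^{d-1}$. On $\Lag_i \cap \Lag_j$ we have $e^{\Phi_i} g_i = e^{-\Phi^*}$, which gives exactly the boundary terms with the signs above.

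The main obstacle is justifying this first-variation formula rigorously, since the cells may be unbounded, facets may be degenerate (lower dimensional or empty), and several hyperplanes may pass through a common face. I would avoid shape-derivative theory and argue directly.

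For $j \neq i$, the region gained by $\Lag_i$ when $\Phi_j$ increases by $h > 0$ is contained in the slab $\{0 \le \<x, y_j - y_i\> - (\Phi_j - \Phi_i) \le h\}$. It agrees with the prism over $\Lag_i \cap \Lag_j$ up to a set of measure $o(h)$, because the constraints with indices $k \ne i,j$ vary continuously and the symmetric difference concentrates near the relative boundary of the facet, which has $\cH^{d-1}$-measure zero. Integrability at infinity is controlled by the exponential decay of $e^{-\Phi^*}$, which follows from $\Phi \in U$ and $\nu$ being centered and not supported on a hyperplane, so that $\Phi^*$ grows linearly. Dividing by $h$ and applying dominated convergence along the facet gives the formula.

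The same argument handles $h < 0$. Finally, I obtain $\partial_i$ either from the analogous computation or from the identity $\sum_k m_k = Z$, which yields $\sum_k \partial_j m_k = m_j$ and determines $\partial_j m_j$ from the off-diagonal derivatives.
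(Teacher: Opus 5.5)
Your outline matches the paper's: the quotient rule with \cref{lemma:integral_derivative}, the rewriting $m_i=e^{\Phi_i}\int_{\Lag_i(\Phi)}e^{-\<x,y_i\>}\,dx$ via \cref{eq:legendre_on_cell}, and a moving-facet derivative. Recovering $\partial_i m_i$ from $\sum_k m_k=Z$ is a valid shortcut.

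The gap is in the rigorous part. You never use $\Phi\in U$ at the step that needs it, which is exactly where the paper's terse proof invokes it. You claim that the region gained by $\Lag_i$ differs from the prism over $\Lag_i\cap\Lag_j$ by $o(h)$, "because the symmetric difference concentrates near the relative boundary of the facet". This is false for general $\Phi$, and nothing in your reasoning excludes the bad case.

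Take $d=1$, $y_1=-1$, $y_2=0$, $y_3=1$, $\Phi=0$. Then $\Phi^*(x)=|x|$, $\Lag_1=(-\infty,0]$, $\Lag_2=\{0\}$, so $\Phi\notin U$. Since $\Lag_1\cap\Lag_2=\{0\}$ has $\cH^0$-measure $1$, the formula predicts $\partial m_1/\partial\Phi_2=1$. But the constraint of $\Lag_1$ coming from $y_3$ also reads $x\leq 0$. Hence $m_1(\Phi+he_2)=1$ for $h>0$ and $m_1(\Phi+he_2)=e^{h}$ for $h<0$, so $m_1$ is not differentiable in $\Phi_2$. Your argument applies verbatim here, so it proves too much.

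Conversely, the place where you do invoke $U$ (exponential decay of $e^{-\Phi^*}$) does not need it. Since $0\in\inter\Conv(\supp\nu)$, one has $\Phi^*(x)\geq c\|x\|-\max_k\Phi_k$ with $c>0$ for every $\Phi\in\RR^N$.

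The missing step is this: for $\Phi\in U$, no constraint of $\Lag_i$ other than the $j$-th is active on the relative interior of a $(d-1)$-dimensional face $\Lag_i\cap\Lag_j$. Such a constraint $k\neq j$ would have to define the same hyperplane, i.e.\ $y_k-y_i=\lambda(y_j-y_i)$ and $\Phi_k-\Phi_i=\lambda(\Phi_j-\Phi_i)$. The cases are:
\begin{itemize}
\item $\lambda<0$ would flatten $\Lag_i$;
\item $\lambda\neq1$ since $y_k\neq y_j$;
\item for $\lambda>0$, one of the lifted points $(y_j,\Phi_j)$, $(y_k,\Phi_k)$ is a convex combination of $(y_i,\Phi_i)$ and the other. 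This puts the corresponding Laguerre cell inside a hyperplane, contradicting $\Phi\in U$.
\end{itemize}
With this in hand, the slices of the gained region (for $h>0$) or lost region (for $h<0$), translated back to the hyperplane, converge $\cH^{d-1}$-a.e.\ to $\Lag_i\cap\Lag_j$. Dominated convergence then yields both one-sided derivatives, hence your formula for $\partial_j m_i$. The dominating bound is $e^{\Phi_i-\<x,y_i\>}=e^{-(\Phi+he_j)^*(x)}\leq e^{|h|}e^{-\Phi^*(x)}$ on $\Lag_i(\Phi+he_j)$. This bound is needed because off $\Lag_i(\Phi)$ the integrand $e^{\Phi_i-\<x,y_i\>}$ exceeds $e^{-\Phi^*}$.
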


\begin{proof}
    Remember that, using \cref{eq:legendre_on_cell}, one has, for any $1 \leq i \leq N$,
    \begin{equation*}
        \int_{\Lag_i(\Phi)} e^{-\Phi^*(x)}\, d x = \int_{\RR^d} e^{\Phi_i - \<x, y_i\>}\, d x.
    \end{equation*}
    Studying the geometric structure of the Laguerre cells $\Lag_j(\Phi)$, for $1 \leq j \leq N$, as described by equation \cref{eq:laguerre_cell}, and using the fact that $\Phi \in U$, one obtains
    \begin{equation*}
        \frac{\partial}{\partial \Phi_i} \int_{\Lag_i(\Phi)} e^{-\Phi^*(x)}\, d x = \int_{\Lag_i(\Phi)} e^{-\Phi^*(x)}\, d x - \sum_{j \neq i} \frac{\int_{\Lag_i(\Phi) \cap \Lag_j(\Phi)} e^{-\Phi^*(x)}\, d \cH^{d-1}(x)}{\|y_i - y_j\|}
    \end{equation*}
    and, for $j \neq i$,
    \begin{equation*}
        \frac{\partial}{\partial \Phi_j} \int_{\Lag_i(\Phi)} e^{-\Phi^*(x)}\, d x = \frac{\int_{\Lag_i(\Phi) \cap \Lag_j(\Phi)} e^{-\Phi^*(x)}\, d \cH^{d-1}(x)}{\|y_i - y_j\|}.
    \end{equation*}
    We easily conclude, using the above results and equations \cref{eq:integral_derivative} and \cref{eq:first_derivative}.
\end{proof}

\subsection{Damped Newton method}
\label{subsec:damped_newton}

In view of the above computations, one can attempt to compute a minimizer of $E_\nu$ using a damped Newton method, that we describe as \cref{algo:damped_newton}. Observe that this method is largely inspired by a similar approach used in the setting of the optimal transform problem, see e.g.~\cite{kitagawa2019}.

\begin{algorithm}
    \caption{Damped Newton method for computing $\psi_\nu$}
    \label{algo:damped_newton}
    \begin{algorithmic}
        \Require a tolerance $\varepsilon > 0$, an initialization $\Phi^{(0)} \in U$.
        \For{$k \in \NN$}
            \If{$\|\nabla E_\nu(\Phi^{(k)})\| \leq \varepsilon \|(\nu(\{y_i\}))_{1 \leq i \leq N}\|$}
                \State \textbf{return} $(\Phi^{(k)})^* + \log \int_{\RR^d} e^{-(\Phi^{(k)})^*(x)}\, d x$
            \EndIf
            \State \textbf{find} $d^{(k)} \in \RR^N$ \textbf{s.t.}\ $\nabla^2 E_\nu(\Phi^{(k)}) d^{(k)} = -\nabla E_\nu(\Phi^{(k)})$
            \State $\tau^{(k)} \gets \max \{2^{-i} \mid i \in \NN,\, \Phi^{(k)} + 2^{-i} d^{(k)} \in U\}$
            \State $\Phi^{(k+1)} \gets \Phi^{(k)} + \tau^{(k)} d^{(k)}$
        \EndFor
    \end{algorithmic}
\end{algorithm}

\Cref{algo:damped_newton} is a damped Newton method, since the step size is sometimes reduced to ensure that all iterates belong to the set $U$. This is motivated by the fact that the formulas \cref{eq:second_derivative_diagonal} and \cref{eq:second_derivative_extradiagonal} for the elements of $\nabla^2 E_\nu(\Phi)$ are only valid when $\Phi \in U$. Remember that \cref{prop:minimizer_in_U} guarantees that the minimizer of $E_\nu$ belongs to $U$.

The matrix $\nabla^2 E_\nu(\Phi^{(k)})$ appearing in \cref{algo:damped_newton} is not invertible, since $E_\nu$ is invariant under addition of a vector of the form $(a + \<v, y_i\>)_{1 \leq i \leq N}$, for $a \in \RR$ and $v \in \RR^d$. In practice, we rather solve the linear system $M_\nu(\Phi^{(k)}) d^{(k)} = -\nabla E_\nu(\Phi^{(k)})$ with $M_\nu(\Phi^{(k)}) := \nabla^2 E_\nu(\Phi^{(k)}) + u u^\top + \sum_{i=1}^d v_i v_i^\top$, where $u$ denotes the vector of size $N$ whose all elements are one, and $v_i$ denotes the vector $((y_j)_i)_{1 \leq j \leq N}$. According to \cref{eq:second_derivative_diagonal} and \cref{eq:second_derivative_extradiagonal}, $\nabla^2 E_\nu(\Phi^{(k)})$ is the sum of a sparse matrix and a matrix of rank one, and thus $M_\nu(\Phi^{(k)})$ is the sum of a sparse matrix and a matrix of rank $d+2$. Elements of $\nabla E_\nu(\Phi^{(k)})$ and $\nabla^2 E_\nu(\Phi^{(k)})$ (and thus $M_\nu(\Phi^{(k)})$) can be computed using \cref{eq:first_derivative}, \cref{eq:second_derivative_diagonal}, \cref{eq:second_derivative_extradiagonal}, and exact integration formulas for the integrals over Laguerre cells.

The question of the invertibility of the matrix $M_\nu(\Phi^{(k)})$, and of the convergence of the method described in \cref{algo:damped_newton}, is left for future work. The analysis of similar Newton methods in the setting of optimal transport~\cite{kitagawa2019} suggests that it may be useful to modify the damping criterion to impose that the iterates are sufficiently far from the boundary of $U$; such considerations are also left for future work. In practice, on our experiments, we do not observe any issues with the damping criterion, nor with solving the linear system $M_\nu(\Phi^{(k)}) d^{(k)} = -\nabla E_\nu(\Phi^{(k)})$ using a sparse linear solver.

\section{Numerical experiments}
\label{sec:numerical_experiments}

We consider different measures $\mu$ for which an exact formula for an essentially continuous convex function $\psi_\mu \colon \RR^d \to \RR \cup \{\infty\}$ with moment measure $\mu$, and for its Legendre transform $\varphi_\mu$, is known. In each case, the translation up to which $\psi_\mu$ is unique will be prescribed by the formulas we give for $\psi_\mu$ and $\varphi_\mu$. We also consider different rules for choosing a finitely supported approximation $\nu$ of the measure $\mu$.

Once $\nu$ is chosen, we compute $\psi_\nu$ using \cref{algo:damped_newton}, with tolerance $\varepsilon = 10^{-10}$. \Cref{algo:damped_newton} only determines $\psi_\nu$ up to a translation, which amounts to adding a linear function to $\varphi_\nu$. As a post-processing step, we choose this translation and linear function so as to minimize the $L^2$ norm $\|\varphi_\mu - \varphi_\nu\|_{L^2(\nu)}$.

\paragraph{Test case~1}

We choose $\mu$ as the uniform probability measure on the square $[-1, 1]^2$. In this setting, valid formulas for $\psi_\mu$ and $\varphi_\mu$ are
\begin{align*}
    \psi_\mu(x) &= 2 \log(1 + e^{x_1}) - x_1 + 2 \log(1 + e^{x_2}) - x_2, \\
    \varphi_\mu(y) &= (1 + y_1) \log (1 + y_1) + (1 - y_1) \log (1 - y_1) \\
    &\quad + (1 + y_2) \log (1 + y_2) + (1 - y_2) \log (1 - y_2) - 4 \log 2,
\end{align*}
using the convention that $t \log t = 0$ when $t = 0$ and $t \log t = \infty$ when $t < 0$. Given a parameter $n \in 2 \NN^*$ describing the fineness of the discretization of $\mu$, and letting $h := 2/n$, we choose $\nu$ as
\begin{equation*}
    \nu := \frac{1}{(n + 1)^2} \sum_{j_1=-\frac{n}{2}}^{\frac{n}{2}} \sum_{j_2=-\frac{n}{2}}^{\frac{n}{2}} \delta_{(j_1 h, j_2 h)}.
\end{equation*}
Observe that in this setting, the number $N$ of points of $\supp\,\nu$ is $(n+1)^2$, and the Wasserstein distance $W_1(\mu, \nu)$ scales like $h$, i.e., like $n^{-1}$ and like $N^{-1/2}$.

\paragraph{Test case~2}

We choose $\mu$ as the uniform probability measure on the triangle with vertices $(-1, -1)$, $(2, -1)$, and $(-1, 2)$. In this setting, valid formulas for $\psi_\mu$ and $\varphi_\mu$ are
\begin{align*}
    \psi_\mu(x) &= 3 \log (1 + e^{x_1} + e^{x_2}) - x_1 - x_2 - \log 2, \\
    \varphi_\mu(y) &= (1 + y_1) \log (1 + y_1) + (1 + y_2) \log (1 + y_2) \\
    &\quad + (1 - y_1 - y_2) \log (1 - y_1 - y_2) - 3 \log 3 + \log 2,
\end{align*}
using the convention that $t \log t = 0$ when $t = 0$ and $t \log t = \infty$ when $t < 0$. Given a discretization parameter $n \in 2 \NN^*$, and letting $h := 2/n$, we choose $\nu$ as
\begin{equation*}
    \nu := \frac{8}{(3n+2)(3n+4)} \sum_{j_1=-\frac{n}{2}}^{n} \sum_{j_2=-\frac{n}{2}}^{\frac{n}{2}-j_1} \delta_{(j_1 h, j_2 h)}.
\end{equation*}
Then, the number $N$ of points of $\supp\,\nu$ is $(3n+2)(3n+4)/8$, and the Wasserstein distance $W_1(\mu, \nu)$ scales like $h$, i.e., like $n^{-1}$ and like $N^{-1/2}$.

\paragraph{Test case~3} We consider again the measure $\mu$ and the functions $\psi_\mu$ and $\varphi_\mu$ from test case~1, but experiment with a different rule for choosing the finitely supported measure $\nu$. As in test case~1, we let $n \in 2 \NN^*$ be a discretization parameter, and let $h := 2/n$. For any pair $(j_1, j_2) \in \ZZ^2$, let us define the basis function
\begin{align}
    \label{eq:basis_function}
    u_{j_1, j_2}^h &\colon \RR^d \to \RR,\, y \mapsto \max \bigg\{0, 1 - \max \bigg\{\frac{|y_1 - j_1 h|}{h}, \frac{|y_2 - j_2 h|}{h}, \\
    \notag
    &\qquad \qquad \qquad \qquad \qquad \qquad \qquad \qquad \frac{|y_1 + y_2 - j_1 h - j_2 h|}{h}\bigg\}\bigg\}.
\end{align}
Observe that the functions $u_{j_1, j_2}^h$ are continuous and piecewise affine, that
\begin{equation*}
    u_{j_1, j_2}^h(k_1 h, k_2 h) = \begin{cases}
        1 &\text{if } k_1 = j_1 \text{ and } k_2 = j_2, \\
        0 &\text{else}
    \end{cases}
\end{equation*}
for all $(k_1, k_2) \in \ZZ^2 \setminus \{(j_1, j_2)\}$, and that, for all $y \in \RR^d$, $\sum_{(j_1, j_2) \in \ZZ^2} u_{j_1, j_2}^h(y) = 1$. We choose $\nu$ as
\begin{equation}
    \label{eq:discrete_measure_test_case_3}
    \nu := \sum_{j_1=-\frac{n}{2}}^{\frac{n}{2}} \sum_{j_2=-\frac{n}{2}}^{\frac{n}{2}} \left(\int_{\RR^d} u_{j_1, j_2}^h(y)\, d \mu(y)\right) \delta_{(j_1 h, j_2 h)}.
\end{equation}
Equivalently, $\supp\,\nu$ is unchanged from test case~1, but now, given $y \in \supp\,\nu$, one has $\nu(\{y\}) = h^2 / 4$ if $y \in (-1, 1)^2$, $\nu(\{y\}) = h^2 / 8$ if $y$ lies in the relative interior of an edge of the square $[-1, 1]^2$, $\nu(\{y\}) = h^2 / 24$ if $y \in \{(-1, -1), (1, 1)\}$, and $\nu(\{y\}) = h^2 / 12$ if $y \in \{(-1, 1), (1, -1)\}$. As in test case~1, the number $N$ of points of $\supp\,\nu$ is $(n+1)^2$, and the Wasserstein distance $W_1(\mu, \nu)$ scales like $h$, i.e., like $n^{-1}$ and like $N^{-1/2}$.

The new formula \cref{eq:discrete_measure_test_case_3} for choosing $\nu$ can be motivated as follows. Using the notation of \cref{sec:semidiscrete_mmp}, for any $1 \leq i \leq N$, there exists a pair $(j_1, j_2) \in \{-n/2, \ldots, n/2\}^2$ for which $y_i = (j_1 h, j_2 h)$; let then $u_i := u_{j_1, j_2}^h$. When $\nu$ is chosen according to \cref{eq:discrete_measure_test_case_3}, it is easily verified that, for any $\Phi \in \RR^N$, one has $E_\nu(\Phi) = \cE_\mu(\sum_{i=1}^N \Phi_i u_i + \cvxind {[-1, 1]^2})$. Thus, the numerical solution $\psi_\nu$ is the Legendre transform of a minimizer of $\cE_\mu$ over the finite dimensional vector space of functions $\{\sum_{i=1}^N \Phi_i u_i + \cvxind {[-1, 1]^2} \mid \Phi \in \RR^N\}$, which can be interpreted as a $P1$ finite element space. Accordingly, with the specific choice of $\nu$ given by \cref{eq:discrete_measure_test_case_3}, the numerical method that we use to approximate $\varphi_\mu$ and $\psi_\mu$ can be understood as a $P1$ finite element method applied to the minimization of $\cE_\mu$.

\paragraph{Test case~4} We adapt the approach of test case~3 to the problem of test case~2. We consider the measure $\mu$ and the functions $\psi_\mu$ and $\varphi_\mu$ from test case~2, and we choose $\nu$ as
\begin{equation*}
    \nu := \sum_{j_1=-\frac{n}{2}}^{n} \sum_{j_2=-\frac{n}{2}}^{\frac{n}{2}-i} \left(\int_{\RR^d} u_{j_1, j_2}^h(y)\, d \mu(y)\right) \delta_{(j_1 h, j_2 h)},
\end{equation*}
where the basis functions $u_{j_1, j_2}^h$ are defined as in \cref{eq:basis_function}. Equivalently, $\supp\,\nu$ is unchanged from test case~2, but now, given $y \in \supp\,\nu$, one has $\nu(\{y\}) = 2 h^2 / 9$ if $y$ lies in the relative interior of the triangle $T := \Conv(\{(-1, -1), (2, -1), (-1, 2)\})$, $\nu(\{y\}) = h^2 / 9$ if $y$ lies in the relative interior of an edge of $T$, and $\nu(\{y\}) = h^2 / 27$ if $y$ is a vertex of $T$. As in test case~2, the number $N$ of points of $\supp\,\nu$ is $(3n+2) (3n+4) / 8$, and the Wasserstein distance $W_1(\mu, \nu)$ scales like $h$, i.e., like $n^{-1}$ and like $N^{-1/2}$.

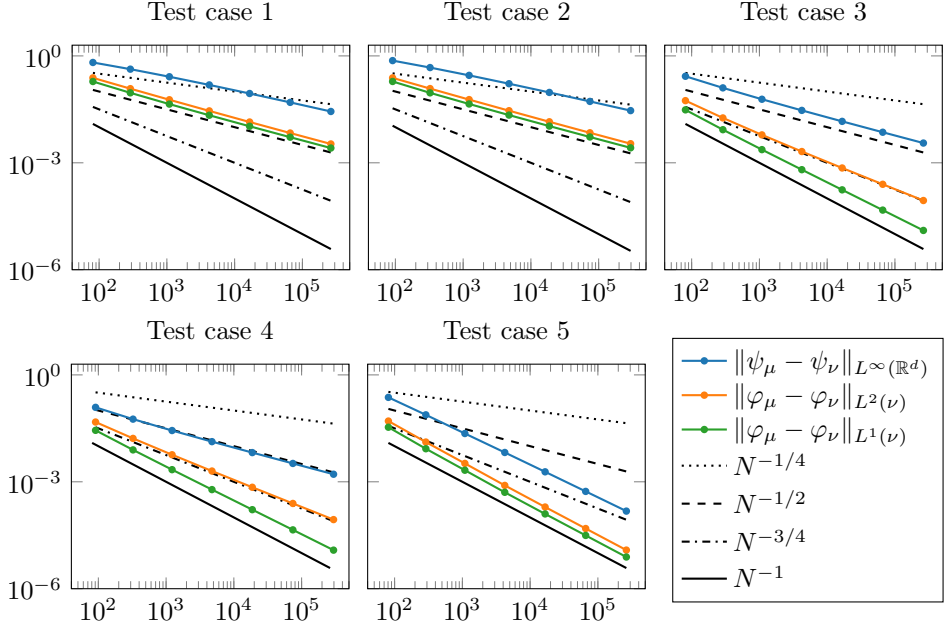
\begin{figure}[t]
    \centering
    \begin{tikzpicture}[baseline=(current bounding box.center)]
        \begin{loglogaxis}[width=150pt,height=130pt,mark size=1pt,xmin=40,xmax=500000,ymin=1e-6,ymax=2,title=Test case~1]
            \addplot[thick,dotted] table[x=N,y expr={pow(\thisrow{N},-1/4)}] {results/test1.txt};
            \addplot[thick,dashed] table[x=N,y expr={pow(\thisrow{N},-1/2)}] {results/test1.txt};
            \addplot[thick,dashdotted] table[x=N,y expr={pow(\thisrow{N},-3/4)}] {results/test1.txt};
            \addplot[thick] table[x=N,y expr={pow(\thisrow{N},-1)}] {results/test1.txt};
            \addplot[thick,tab1,mark=*,style=thick] table[x=N,y=Linfty] {results/test1.txt};
            \addplot[thick,tab2,mark=*] table[x=N,y=L2] {results/test1.txt};
            \addplot[thick,tab3,mark=*] table[x=N,y=L1] {results/test1.txt};
        \end{loglogaxis}
    \end{tikzpicture}%
    \begin{tikzpicture}[baseline=(current bounding box.center)]
        \begin{loglogaxis}[width=150pt,height=130pt,mark size=1pt,xmin=40,xmax=500000,ymin=1e-6,ymax=2,yticklabel=\empty,title=Test case~2]
            \addplot[thick,dotted] table[x=N,y expr={pow(\thisrow{N},-1/4)}] {results/test2.txt};
            \addplot[thick,dashed] table[x=N,y expr={pow(\thisrow{N},-1/2)}] {results/test2.txt};
            \addplot[thick,dashdotted] table[x=N,y expr={pow(\thisrow{N},-3/4)}] {results/test2.txt};
            \addplot[thick] table[x=N,y expr={pow(\thisrow{N},-1)}] {results/test2.txt};
            \addplot[thick,tab1,mark=*] table[x=N,y=Linfty] {results/test2.txt};
            \addplot[thick,tab2,mark=*] table[x=N,y=L2] {results/test2.txt};
            \addplot[thick,tab3,mark=*] table[x=N,y=L1] {results/test2.txt};
        \end{loglogaxis}
    \end{tikzpicture}%
    \begin{tikzpicture}[baseline=(current bounding box.center)]
        \begin{loglogaxis}[width=150pt,height=130pt,mark size=1pt,xmin=40,xmax=500000,ymin=1e-6,ymax=2,yticklabel=\empty,title=Test case~3]
            \addplot[thick,dotted] table[x=N,y expr={pow(\thisrow{N},-1/4)}] {results/test3.txt};
            \addplot[thick,dashed] table[x=N,y expr={pow(\thisrow{N},-1/2)}] {results/test3.txt};
            \addplot[thick,dashdotted] table[x=N,y expr={pow(\thisrow{N},-3/4)}] {results/test3.txt};
            \addplot[thick] table[x=N,y expr={pow(\thisrow{N},-1)}] {results/test3.txt};
            \addplot[thick,tab1,mark=*] table[x=N,y=Linfty] {results/test3.txt};
            \addplot[thick,tab2,mark=*] table[x=N,y=L2] {results/test3.txt};
            \addplot[thick,tab3,mark=*] table[x=N,y=L1] {results/test3.txt};
        \end{loglogaxis}
    \end{tikzpicture}%

    \begin{tikzpicture}[baseline=(current bounding box.center)]
        \begin{loglogaxis}[width=150pt,height=130pt,mark size=1pt,xmin=40,xmax=500000,ymin=1e-6,ymax=2,title=Test case~4]
            \addplot[thick,dotted] table[x=N,y expr={pow(\thisrow{N},-1/4)}] {results/test4.txt};
            \addplot[thick,dashed] table[x=N,y expr={pow(\thisrow{N},-1/2)}] {results/test4.txt};
            \addplot[thick,dashdotted] table[x=N,y expr={pow(\thisrow{N},-3/4)}] {results/test4.txt};
            \addplot[thick] table[x=N,y expr={pow(\thisrow{N},-1)}] {results/test5.txt};
            \addplot[thick,tab1,mark=*] table[x=N,y=Linfty] {results/test4.txt};
            \addplot[thick,tab2,mark=*] table[x=N,y=L2] {results/test4.txt};
            \addplot[thick,tab3,mark=*] table[x=N,y=L1] {results/test4.txt};
        \end{loglogaxis}
    \end{tikzpicture}%
    \begin{tikzpicture}[baseline=(current bounding box.center)]
        \begin{loglogaxis}[width=150pt,height=130pt,mark size=1pt,xmin=40,xmax=500000,ymin=1e-6,ymax=2,yticklabel=\empty,title=Test case~5]
            \addplot[thick,dotted] table[x=N,y expr={pow(\thisrow{N},-1/4)}] {results/test5.txt};
            \addplot[thick,dashed] table[x=N,y expr={pow(\thisrow{N},-1/2)}] {results/test5.txt};
            \addplot[thick,dashdotted] table[x=N,y expr={pow(\thisrow{N},-3/4)}] {results/test5.txt};
            \addplot[thick] table[x=N,y expr={pow(\thisrow{N},-1)}] {results/test5.txt};
            \addplot[thick,tab1,mark=*] table[x=N,y=Linfty] {results/test5.txt};
            \addplot[thick,tab2,mark=*] table[x=N,y=L2] {results/test5.txt};
            \addplot[thick,tab3,mark=*] table[x=N,y=L1] {results/test5.txt};
        \end{loglogaxis}
    \end{tikzpicture}%
    \hspace{10pt}%
    \begin{tikzpicture}[baseline=(current bounding box.center)]
        \begin{axis}[mark size=1pt,xmin=0,xmax=1,ymin=0,ymax=1,hide axis,legend cell align=left]
            \addlegendimage{thick,tab1,mark=*}
            \addlegendentry{$\|\psi_\mu - \psi_\nu\|_{L^\infty(\RR^d)}$}
            \addlegendimage{thick,tab2,mark=*}
            \addlegendentry{$\|\varphi_\mu - \varphi_\nu\|_{L^2(\nu)}$}
            \addlegendimage{thick,tab3,mark=*}
            \addlegendentry{$\|\varphi_\mu - \varphi_\nu\|_{L^1(\nu)}$}
            \addlegendimage{thick,dotted}
            \addlegendentry{$N^{-1/4}$}
            \addlegendimage{thick,dashed}
            \addlegendentry{$N^{-1/2}$}
            \addlegendimage{thick,dashdotted}
            \addlegendentry{$N^{-3/4}$}
            \addlegendimage{thick}
            \addlegendentry{$N^{-1}$}
        \end{axis}
    \end{tikzpicture}%
    \caption{Approximation error, with respect to the number $N$ of points in $\supp\,\nu$ (recall that, by \cref{eq:legendre_isometry}, one has $\|\psi_\mu - \psi_\nu\|_{L^\infty(\RR^d)} = \|\varphi_\mu - \varphi_\nu\|_{L^\infty(\supp\,\mu)} \geq \|\varphi_\mu - \varphi_\nu\|_{L^\infty(\nu)}$ in all test cases).}
    \label{fig:error}
\end{figure}

\begin{figure}[t]
    \centering
    \begin{tikzpicture}[baseline=(current bounding box.center)]
        \begin{semilogyaxis}[width=150pt,height=130pt,mark size=1pt,xmin=-1,xmax=18,ymin=1e-15,ymax=100,title=Test case~1]
            \addplot[thick,dotted,domain=-100:100,update limits=false] {1e-10};
            \addplot[thick,tab1,scatter,point meta=explicit symbolic,scatter/classes={0={mark=-},1={mark=*}}] table[x=k,y=residual,meta=damping] {results/test1-n8.txt};
            \addplot[thick,tab2,scatter,point meta=explicit symbolic,scatter/classes={0={mark=-},1={mark=*}}] table[x=k,y=residual,meta=damping] {results/test1-n16.txt};
            \addplot[thick,tab3,scatter,point meta=explicit symbolic,scatter/classes={0={mark=-},1={mark=*}}] table[x=k,y=residual,meta=damping] {results/test1-n32.txt};
            \addplot[thick,tab4,scatter,point meta=explicit symbolic,scatter/classes={0={mark=-},1={mark=*}}] table[x=k,y=residual,meta=damping] {results/test1-n64.txt};
            \addplot[thick,tab5,scatter,point meta=explicit symbolic,scatter/classes={0={mark=-},1={mark=*}}] table[x=k,y=residual,meta=damping] {results/test1-n128.txt};
            \addplot[thick,tab6,scatter,point meta=explicit symbolic,scatter/classes={0={mark=-},1={mark=*}}] table[x=k,y=residual,meta=damping] {results/test1-n256.txt};
            \addplot[thick,tab7,scatter,point meta=explicit symbolic,scatter/classes={0={mark=-},1={mark=*}}] table[x=k,y=residual,meta=damping] {results/test1-n512.txt};
        \end{semilogyaxis}
    \end{tikzpicture}%
    \begin{tikzpicture}[baseline=(current bounding box.center)]
        \begin{semilogyaxis}[width=150pt,height=130pt,mark size=1pt,xmin=-1,xmax=18,ymin=1e-15,ymax=100,yticklabel=\empty,title=Test case~2]
            \addplot[thick,dotted,domain=-100:100,update limits=false] {1e-10};
            \addplot[thick,tab1,scatter,point meta=explicit symbolic,scatter/classes={0={mark=-},1={mark=*}}] table[x=k,y=residual,meta=damping] {results/test2-n8.txt};
            \addplot[thick,tab2,scatter,point meta=explicit symbolic,scatter/classes={0={mark=-},1={mark=*}}] table[x=k,y=residual,meta=damping] {results/test2-n16.txt};
            \addplot[thick,tab3,scatter,point meta=explicit symbolic,scatter/classes={0={mark=-},1={mark=*}}] table[x=k,y=residual,meta=damping] {results/test2-n32.txt};
            \addplot[thick,tab4,scatter,point meta=explicit symbolic,scatter/classes={0={mark=-},1={mark=*}}] table[x=k,y=residual,meta=damping] {results/test2-n64.txt};
            \addplot[thick,tab5,scatter,point meta=explicit symbolic,scatter/classes={0={mark=-},1={mark=*}}] table[x=k,y=residual,meta=damping] {results/test2-n128.txt};
            \addplot[thick,tab6,scatter,point meta=explicit symbolic,scatter/classes={0={mark=-},1={mark=*}}] table[x=k,y=residual,meta=damping] {results/test2-n256.txt};
            \addplot[thick,tab7,scatter,point meta=explicit symbolic,scatter/classes={0={mark=-},1={mark=*}}] table[x=k,y=residual,meta=damping] {results/test2-n512.txt};
        \end{semilogyaxis}
    \end{tikzpicture}%
    \begin{tikzpicture}[baseline=(current bounding box.center)]
        \begin{semilogyaxis}[width=150pt,height=130pt,mark size=1pt,xmin=-1,xmax=18,ymin=1e-15,ymax=100,yticklabel=\empty,title=Test case~3]
            \addplot[thick,dotted,domain=-100:100,update limits=false] {1e-10};
            \addplot[thick,tab1,scatter,point meta=explicit symbolic,scatter/classes={0={mark=-},1={mark=*}}] table[x=k,y=residual,meta=damping] {results/test3-n8.txt};
            \addplot[thick,tab2,scatter,point meta=explicit symbolic,scatter/classes={0={mark=-},1={mark=*}}] table[x=k,y=residual,meta=damping] {results/test3-n16.txt};
            \addplot[thick,tab3,scatter,point meta=explicit symbolic,scatter/classes={0={mark=-},1={mark=*}}] table[x=k,y=residual,meta=damping] {results/test3-n32.txt};
            \addplot[thick,tab4,scatter,point meta=explicit symbolic,scatter/classes={0={mark=-},1={mark=*}}] table[x=k,y=residual,meta=damping] {results/test3-n64.txt};
            \addplot[thick,tab5,scatter,point meta=explicit symbolic,scatter/classes={0={mark=-},1={mark=*}}] table[x=k,y=residual,meta=damping] {results/test3-n128.txt};
            \addplot[thick,tab6,scatter,point meta=explicit symbolic,scatter/classes={0={mark=-},1={mark=*}}] table[x=k,y=residual,meta=damping] {results/test3-n256.txt};
            \addplot[thick,tab7,scatter,point meta=explicit symbolic,scatter/classes={0={mark=-},1={mark=*}}] table[x=k,y=residual,meta=damping] {results/test3-n512.txt};
        \end{semilogyaxis}
    \end{tikzpicture}%

    \begin{tikzpicture}[baseline=(current bounding box.center)]
        \begin{semilogyaxis}[width=150pt,height=130pt,mark size=1pt,xmin=-1,xmax=18,ymin=1e-15,ymax=100,title=Test case~4]
            \addplot[thick,dotted,domain=-100:100,update limits=false] {1e-10};
            \addplot[thick,tab1,scatter,point meta=explicit symbolic,scatter/classes={0={mark=-},1={mark=*}}] table[x=k,y=residual,meta=damping] {results/test4-n8.txt};
            \addplot[thick,tab2,scatter,point meta=explicit symbolic,scatter/classes={0={mark=-},1={mark=*}}] table[x=k,y=residual,meta=damping] {results/test4-n16.txt};
            \addplot[thick,tab3,scatter,point meta=explicit symbolic,scatter/classes={0={mark=-},1={mark=*}}] table[x=k,y=residual,meta=damping] {results/test4-n32.txt};
            \addplot[thick,tab4,scatter,point meta=explicit symbolic,scatter/classes={0={mark=-},1={mark=*}}] table[x=k,y=residual,meta=damping] {results/test4-n64.txt};
            \addplot[thick,tab5,scatter,point meta=explicit symbolic,scatter/classes={0={mark=-},1={mark=*}}] table[x=k,y=residual,meta=damping] {results/test4-n128.txt};
            \addplot[thick,tab6,scatter,point meta=explicit symbolic,scatter/classes={0={mark=-},1={mark=*}}] table[x=k,y=residual,meta=damping] {results/test4-n256.txt};
            \addplot[thick,tab7,scatter,point meta=explicit symbolic,scatter/classes={0={mark=-},1={mark=*}}] table[x=k,y=residual,meta=damping] {results/test4-n512.txt};
        \end{semilogyaxis}
    \end{tikzpicture}%
    \begin{tikzpicture}[baseline=(current bounding box.center)]
        \begin{semilogyaxis}[width=260pt,height=130pt,mark size=1pt,xmin=-1,xmax=37,ymin=1e-15,ymax=100,yticklabel=\empty,title=Test case~5]
            \addplot[thick,dotted,domain=-100:100,update limits=false] {1e-10};
            \addplot[thick,tab1,scatter,point meta=explicit symbolic,scatter/classes={0={mark=-},1={mark=*}}] table[x=k,y=residual,meta=damping] {results/test5-n8.txt};
            \addplot[thick,tab2,scatter,point meta=explicit symbolic,scatter/classes={0={mark=-},1={mark=*}}] table[x=k,y=residual,meta=damping] {results/test5-n16.txt};
            \addplot[thick,tab3,scatter,point meta=explicit symbolic,scatter/classes={0={mark=-},1={mark=*}}] table[x=k,y=residual,meta=damping] {results/test5-n32.txt};
            \addplot[thick,tab4,scatter,point meta=explicit symbolic,scatter/classes={0={mark=-},1={mark=*}}] table[x=k,y=residual,meta=damping] {results/test5-n64.txt};
            \addplot[thick,tab5,scatter,point meta=explicit symbolic,scatter/classes={0={mark=-},1={mark=*}}] table[x=k,y=residual,meta=damping] {results/test5-n128.txt};
            \addplot[thick,tab6,scatter,point meta=explicit symbolic,scatter/classes={0={mark=-},1={mark=*}}] table[x=k,y=residual,meta=damping] {results/test5-n256.txt};
            \addplot[thick,tab7,scatter,point meta=explicit symbolic,scatter/classes={0={mark=-},1={mark=*}}] table[x=k,y=residual,meta=damping] {results/test5-n512.txt};
        \end{semilogyaxis}
    \end{tikzpicture}%

    \begin{tikzpicture}[baseline=(current bounding box.center)]
        \begin{axis}[mark size=1pt,xmin=0,xmax=1,ymin=0,ymax=1,hide axis,legend columns=4,legend cell align=left]
            \addlegendimage{thick,tab1,mark=|}
            \addlegendentry{$n=8$}
            \addlegendimage{thick,tab2,mark=|}
            \addlegendentry{$n=16$}
            \addlegendimage{thick,tab3,mark=|}
            \addlegendentry{$n=32$}
            \addlegendimage{thick,tab4,mark=|}
            \addlegendentry{$n=64$}
            \addlegendimage{thick,tab5,mark=|}
            \addlegendentry{$n=128$}
            \addlegendimage{thick,tab6,mark=|}
            \addlegendentry{$n=256$}
            \addlegendimage{thick,tab7,mark=|}
            \addlegendentry{$n=512$}
        \end{axis}
    \end{tikzpicture}%
    \caption{Residuals $\|\nabla E_\nu(\Phi^{(k)})\| / \|(\nu(\{y_i\}))_{1 \leq i \leq N}\|$ in \cref{algo:damped_newton}, with respect to the iteration number $k$, for different values of the discretization parameter $n$. The dotted line corresponds to the tolerance $\varepsilon = 10^{-10}$. Iterations $k$ at which damping occurs, i.e., for which $\tau^{(k)} \neq 1$, are indicated by round marks.}
    \label{fig:residuals}
\end{figure}
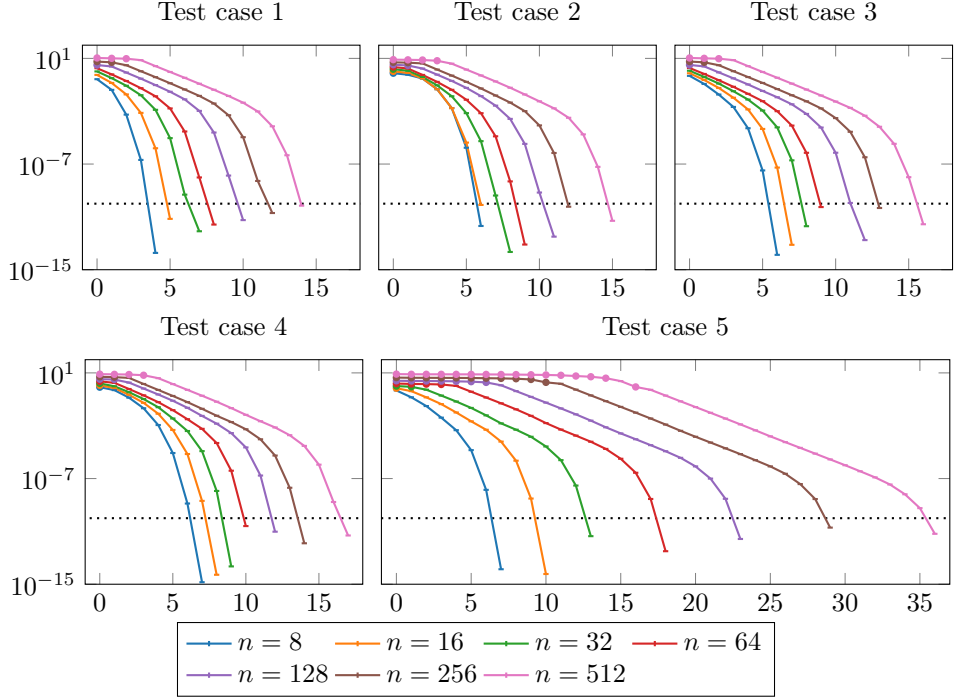

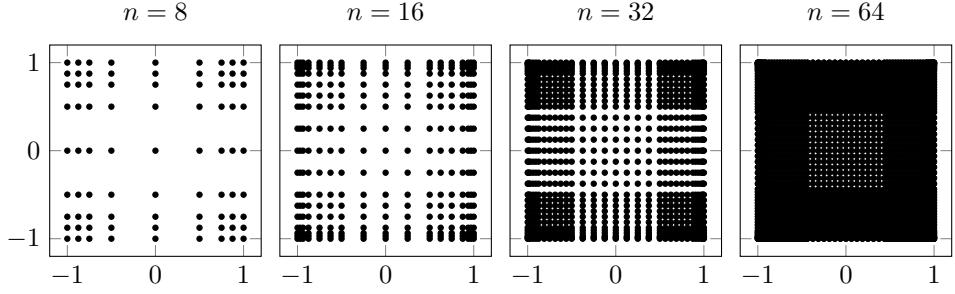
\begin{figure}[t]
    \centering
    \begin{tikzpicture}[baseline=(current bounding box.center)]
        \begin{axis}[width=125pt,height=125pt,mark size=1pt,axis equal,title={$n=8$}]
            \addplot[only marks] table {results/test5-grid-n8.txt};
        \end{axis}
    \end{tikzpicture}%
    \begin{tikzpicture}[baseline=(current bounding box.center)]
        \begin{axis}[width=125pt,height=125pt,mark size=1pt,axis equal,yticklabel=\empty,title={$n=16$}]
            \addplot[only marks] table {results/test5-grid-n16.txt};
        \end{axis}
    \end{tikzpicture}%
    \begin{tikzpicture}[baseline=(current bounding box.center)]
        \begin{axis}[width=125pt,height=125pt,mark size=1pt,axis equal,yticklabel=\empty,title={$n=32$}]
            \addplot[only marks] table {results/test5-grid-n32.txt};
        \end{axis}
    \end{tikzpicture}%
    \begin{tikzpicture}[baseline=(current bounding box.center)]
        \begin{axis}[width=125pt,height=125pt,mark size=1pt,axis equal,yticklabel=\empty,title={$n=64$}]
            \addplot[only marks] table {results/test5-grid-n64.txt};
        \end{axis}
    \end{tikzpicture}%
    \caption{Points of $\supp\,\nu$ in test case~5, for some values of the discretization parameter $n$.}
    \label{fig:grids}
\end{figure}

\paragraph{Test case~5}

We consider the measure $\mu$ and the functions $\psi_\mu$ and $\varphi_\mu$ from test case~1, and a choice of measure $\nu$ that is designed to be adapted to the regularity of the function $\varphi_\mu$, that we know a priori. More precisely, let $n \in 2 \NN^*$ be a discretization parameter as in the previous test cases. For $t \in [-1, 1]$, let $u(t) := (1+t) \log (1+t) + (1-t) \log (1-t)$, so that $\varphi_\mu(y) = u(y_1) + u(y_2) - 4 \log 2$. We compute an $(n+1)$-point discretization of the interval $[-1, 1]$ by first considering the points $-1$ and $1$ and then iteratively bisecting a discretization interval $[t_{j_1}, t_{j_2}]$ that maximizes the $L^\infty([t_{j_1}, t_{j_2}])$ error between $u$ and the affine interpolation of $(t_{j_1}, u(t_{j_1}))$ and $(t_{j_2}, u(t_{j_2}))$. We denote by $-1 = t_{-n/2} < \cdots < t_{n/2} = 1$ the points thus obtained. For convenience, we also choose some arbitrary $t_{-n/2-1} < t_{-n/2}$ and $t_{n/2+1} > t_n$ (whose exact value will have no effect on $\nu$). We let
\begin{equation*}
    \nu := \sum_{j_1=-\frac{n}{2}}^{\frac{n}{2}} \sum_{j_2=-\frac{n}{2}}^{\frac{n}{2}} \left(\int_{\RR^d} u_{j_1, j_2}(y)\, d \mu(y)\right) \delta_{(t_{j_1}, t_{j_2})},
\end{equation*}
where, for any pair $(j_1, j_2) \in \{-n/2, \ldots, n/2\}^2$ and any $y \in \RR^d$,
\begin{align*}
    u_{j_1, j_2}(y) &:= \max \bigg\{0, 1 - \max \bigg\{\frac{y_1 - t_{j_1}}{t_{j_1+1} - t_{j_1}}, \frac{t_{j_1} - y_1}{t_{j_1} - t_{j_1-1}}, \frac{y_2 - t_{j_2}}{t_{j_2+1} - t_{j_2}}, \frac{t_{j_2} - y_2}{t_{j_2} - t_{j_2-1}}, \\
    &\qquad \qquad \qquad \qquad \qquad \frac{y_1 - t_{j_1}}{t_{j_1+1} - t_{j_1}} + \frac{y_2 - t_{j_2}}{t_{j_2+1} - t_{j_2}}, \frac{t_{j_1} - y_1}{t_{j_1} - t_{j_1-1}} + \frac{t_{j_2} - y_2}{t_{j_2} - t_{j_2-1}}\bigg\}\bigg\}
\end{align*}
(compare with equation \cref{eq:basis_function}). The points of $\supp\,\nu$ are displayed in \cref{fig:grids}, for some values of the discretization parameter $n$.

The approach we use for constructing the adapted measure $\nu$ relies on the tensor product structure of $[-1, 1]^2$. Therefore, it cannot be directly extended to the problem of test cases~2 and~4. It also relies on the knowledge of the exact solution $\varphi_\mu$, and thus cannot be applied to practical cases for which the exact solution is unknown. Rather, the numerical results indicate that it may be of interest in future work to study the possibility of choosing $\nu$ adaptively without knowledge of the exact solution.

\paragraph{Error norms}

The error $\|e^{-\psi_\mu(\,\cdot\,)} - e^{-\psi_\nu(\,\cdot\,)}\|_{L^1(\RR^d)}$ from \cref{thm:stability} is not easily computable. Rather, we compute the errors $\|\psi_\mu - \psi_\nu\|_{L^\infty(\RR^d)}$, $\|\varphi_\mu - \varphi_\nu\|_{L^2(\nu)}$, and $\|\varphi_\mu - \varphi_\nu\|_{L^1(\nu)}$.

The errors $\|\varphi_\mu - \varphi_\nu\|_{L^2(\nu)}$, and $\|\varphi_\mu - \varphi_\nu\|_{L^1(\nu)}$ are easily computable, since $\nu$ is finitely supported. The error $\|\psi_\mu - \psi_\nu\|_{L^\infty(\RR^d)}$ could be infinite for general $\mu$ and $\nu$, but it is finite in all our test cases thanks to the fact that we always consider measures $\mu$ and $\nu$ for which the convex hull $\Conv(\supp\,\nu)$ coincides with $\supp\,\mu$. In this case, one can show that
\begin{align}
    \label{eq:legendre_isometry}
    \|\psi_\mu - \psi_\nu\|_{L^\infty(\RR^d)}
    &= \|\varphi_\mu - \varphi_\nu\|_{L^\infty(\supp\,\mu)} \\
    \notag
    &= \max \left\{\max_{x \in \RR^d} (\psi_\mu(x) - \psi_\nu(x)), \max_{y \in \supp\,\mu} (\varphi_\mu(y) - \varphi_\nu(y))\right\}.
\end{align}
Moreover, the maxima in $x$ and $y$ are reached at points corresponding to vertices of the respective epigraphs of $\psi_\nu$ and $\varphi_\nu$ (which are unbounded polyhedra), which makes $\|\psi_\mu - \psi_\nu\|_{L^\infty(\RR^d)}$ computable.

To compare numerical results with \cref{thm:stability}, one can rely on the following observations. Letting $R$ and $r$ be as in \cref{thm:stability}, by \cref{prop:growth_estimate}, there exist $v_1$, $v_2 \in \RR^d$ and a constant $C_d > 0$ depending only on $d$ such that, for all $x \in \RR^d$,
\begin{align}
    \label{eq:numerical_experiments_growth_estimate}
    \psi_\mu(x - v_1) &\geq \frac{r}{2 \sqrt{d}} \|x\| - C_d - d \log R, &
    \psi_\nu(x - v_2) &\geq \frac{r}{2 \sqrt{d}} \|x\| - C_d - d \log R.
\end{align}
Thus, for any $L > 0$,
\begin{align*}
    \|e^{-\psi_\mu(\,\cdot\,)} - e^{-\psi_\nu(\,\cdot\,)}\|_{L^1(\RR^d)}
    &= \|e^{-\psi_\mu(\,\cdot\,)} - e^{-\psi_\nu(\,\cdot\,)}\|_{L^1(B(v_1, L))} \\
    &\quad + \|e^{-\psi_\mu(\,\cdot\,)} - e^{-\psi_\nu(\,\cdot\,)}\|_{L^1(B(v_2, L))} \\
    &\quad + \|e^{-\psi_\mu(\,\cdot\,)} - e^{-\psi_\nu(\,\cdot\,)}\|_{L^1(\RR^d \setminus (B(v_1, L) \cup B(v_2, L)))} \\
    &\lesssim_{d, R/r} L^d \|e^{-\psi_\mu(\,\cdot\,)} - e^{-\psi_\nu(\,\cdot\,)}\|_{L^\infty(\RR^d)} \\
    &\qquad \quad + (1 + (rL)^{d-1}) e^{-rL/(2\sqrt{d})},
\end{align*}
where, using again \cref{eq:numerical_experiments_growth_estimate}, $\|e^{-\psi_\mu(\,\cdot\,)} - e^{-\psi_\nu(\,\cdot\,)}\|_{L^\infty(\RR^d)} \lesssim_d R^d \|\psi_\mu - \psi_\nu\|_{L^\infty(\RR^d)}$.
Choosing $L$ appropriately, one obtains the estimate
\begin{equation}
    \label{eq:error_norms_comparison}
    \|e^{-\psi_\mu(\,\cdot\,)} - e^{-\psi_\nu(\,\cdot\,)}\|_{L^1(\RR^d)} \lesssim_{d, R/r} \|\psi_\mu - \psi_\nu\|_{L^\infty(\RR^d)} \log^d \left(2 + \|\psi_\mu - \psi_\nu\|_{L^\infty(\RR^d)}^{-1}\right).
\end{equation}

\paragraph{Results and comments}

The errors between $\psi_\mu$, $\varphi_\mu$ and $\psi_\nu$, $\varphi_\nu$ in all test cases and for some values of the discretization parameter $n$ are displayed in \cref{fig:error}.

In test cases~1 and~2, all errors experimentally scale like $N^{-1/2}$, i.e., like $W_1(\mu, \nu)$. This is better than the rate $W_1(\mu, \nu)^{1/2}$ that could have been expected in view of \cref{thm:stability}, and of the estimate \cref{eq:error_norms_comparison}. This result is not surprising, as we do not know if the exponent in \cref{thm:stability} is optimal.

In test cases~3 and~4, the modified rule for choosing $\nu$ leads to experimentally larger rates of convergence in the $L^2(\nu)$ and $L^1(\nu)$ norms. This highlights the importance of the choice of $\nu$, beyond what is reflected by the Wasserstein distance $W_1(\mu, \nu)$, which scales like $N^{-1/2}$ in all first fourth test cases. We however do not currently know any theoretical guarantee that following the approach used in test cases~3 and~4 for choosing the measure $\nu$ always leads to such improved rates of convergence. The rate of convergence in the $L^\infty$ norm is experimentally not improved, although the asymptotic constant seems smaller than in test cases~1 and~2.

In test case~5, all errors scale like $N^{-1}$. This improvement of the rates of convergence is consistent with the fact that $\supp\,\nu$ has been chosen to be adapted to the regularity of $\varphi_\mu$.

The residuals at each iteration of \cref{algo:damped_newton} in all experiments are displayed in \cref{fig:residuals}. The results in test cases~1,~2,~3, and~4 are similar, and illustrate superlinear convergence of the Newton method, up to some artifacts in the last iterations that may be related to numerical precision issues. Damping experimentally only occurs in the first few iterations of \cref{algo:damped_newton}. In test case~5, superlinear convergence is still observed, but more iterations are required. Specifically, the number of iterations at which damping occurs is larger, and grows faster with respect to the discretization parameter $n$. This could be related to the fact that, for a fixed value of $n$, the quantity $\min_{y \in \supp\,\nu} \nu(\{y\})$ is significantly smaller in test case~5 than in the other cases.

\section*{Acknowledgments}

G.~B.\ thanks Quentin Mérigot for interesting discussion on the topic of this work.

\iftoggle{author}{
    \bibliographystyle{plain}
    \bibliography{references}
}{
    \bibliographystyle{siamplain}
    \bibliography{references}
}
\end{document}